\newtheorem{prop}{Proposition}[section]
\newtheorem{thm}[prop]{Theorem}
\newtheorem*{thmA}{Theorem A}
\newtheorem*{thmB}{Theorem B}
\newtheorem{lem}[prop]{Lemma}
\newtheorem{cor}[prop]{Corollary}
\newtheorem{defn}[prop]{Definition}
\theoremstyle{definition}
\newtheorem*{ack}{Acknowledgments}
\theoremstyle{remark}
\newtheorem{rem}[prop]{Remark}
\numberwithin{equation}{section}
\begin{document}
	\begin{abstract}
		We consider a locally constrained curvature flow in a static rotationally symmetric space $\mathbf{N}^{n+1}$, which was firstly introduced by Hu and Li\cite{HL-2019} in the hyperbolic space. We prove that if the initial hypersurface is graphical, then the smooth solution of the flow remains to be graphical, exists for all positive time $t\in[0,\infty)$ and converges to a slice of $\mathbf{N}^{n+1}$ exponentially in the smooth topology. Moreover, we prove that the flow preserves static convexity if the initial hypersurface is close to a slice of $\mathbf{N}^{n+1}$ in the $C^1$ sense. As applications, we prove a family of weighted geometric inequalities for static convex domains which is close to a slice of $\mathbf{N}^{n+1}$ in the $C^1$ sense. 
	\end{abstract}

\title[The weighted geometric inequalities for static convex domains]{The weighted geometric inequalities for static convex domains in static rotationally symmetric spaces}

\author[S. Pan]{Shujing Pan}
\address{School of Mathematical Sciences, University of Science and Technology of China, Hefei 230026, P.R. China}
\email{\href{mailto:psj@ustc.edu.cn}{psj@ustc.edu.cn}}
\author[B. Yang]{Bo Yang}
\address{Department of Mathematical Sciences, Tsinghua University, Beijing 100084, P.R. China}
\email{\href{mailto:ybo@tsinghua.edu.cn}{ybo@tsinghua.edu.cn}}
\keywords{Weighted geometric inequalities, static convex domians, static rotationally symmetric space}
\subjclass[2010]{53C21, 53C42, 52A40}

\maketitle
\tableofcontents
\section{Introduction}\label{sec-intro}

 Let $X_0:\mathbb{S}^n\to \mathbf{N}^{n+1}$ be a smooth embedding such that $M_0=X_0(\mathbb{S}^n)$ is a hypersurface in $\mathbf{N}^{n+1}$, where $\mathbf{N}^{n+1}$ is a rotationally symmetric space equipped with the metric \eqref{eq-metric}. We consider the smooth family of embeddings $X:\mathbb{S}^n\times [0,T)\to \mathbf{N}^{n+1}$ satisfying
\begin{equation}\label{flow-WVPF}
	\left\{\begin{aligned}
		\frac{\partial}{\partial t}X(x,t)=&~(n-\frac{uH}{\phi'})\nu(x,t),\\
		X(\cdot,0)=&~X_0(\cdot),
	\end{aligned}\right.
\end{equation}
where $\nu$ is the unit outer normal of $M_t=X(\mathbb{S}^n,t)$, $H$ is the mean curvature of $M_t$ and $u$ is the support function of $M_t$, which is defined as $u=\langle{\phi\partial_{r},\nu}\rangle$.

\begin{defn}\label{defn-N}
Let $\mathbf{N}^{n+1}=[r_0,\bar{r})\times\mathbb{S}^n$ be a rotationally symmetric space equipped with the following warped product metric:
\begin{equation}\label{eq-metric}
	\bar{g}=dr^2+\phi^2(r)\sigma,\quad r\in[r_0,\bar{r}),
\end{equation}
where $r_0\geq 0$ and $\bar{r}$ is allowed to be $+\infty$. $\sigma$ is the round metric on $\mathbb{S}^n$, the functions $\phi,\phi',\phi''>0$ and satisfy the following inequalities:
	\begin{equation}\label{in-phiphi}
	0\leq (\phi')^2-\phi\phi''\leq 1.
\end{equation}
\end{defn}

\begin{rem}
   In this paper, most results apply to all dimensions $n\geq 1$, except for Corollary \ref{Cor-wii} and Corollary \ref{Cor-Sci}, where we need extra conditions $n\geq 2$ or $n\geq 3$.
\end{rem}
\begin{rem}
  In the remaining of this paper, we always assume that the rotationally symmetric space $\mathbf{N}^{n+1}$ equipped with the metric \eqref{eq-metric} has the property that $\phi,\phi'>0$. We only emphasize the assumptions on $\mathbf{N}^{n+1}$ in addition to the property that $\phi,\phi'>0$.
\end{rem}

Let $S(r)=\{r\}\times\mathbb{S}^n$ be the slice of $\mathbf{N}^{n+1}$ with radius $r$ and $B(r):=[r_0,r]\times \mathbb{S}^n$ be the bounded domain enclosed by $S(r)$ and $S(r_0)$. A hypersurface $M$ of $\mathbf{N}^{n+1}$ is called graphical, if it can be expressed as a graph of a smooth and positive function $r(\theta)$ on $\mathbb{S}^n$:
\begin{equation*}
	M=\{(r(\theta),\theta)\in[r_0,\bar{r})\times\mathbb{S}^n\}.
\end{equation*}

Assume that $\Omega$ is a bounded domain in $\mathbf{N}^{n+1}$ which is enclosed by a hypersurface $M$ and $S(r_0)$. We define the weighted $\alpha$-volume of $\Omega$, the weighted area and the weighted mean curvature integral of $M$ as follows:
\begin{align}
		V_{\phi}^{\alpha}(\Omega)&=\int_{\Omega}{(\phi')^{\alpha}}dv,\label{eq-wevo}\\
		A_{0,\phi}(M)&=\int_{M}{\phi'}d\mu,\label{eq-wear}\\
           A_{1,\phi}(M)&=\int_{M}{\phi'H}d\mu.\label{eq-weH}
	\end{align}
Note that $V_{\phi}^0(\Omega):=V(\Omega)$, is the volume of the domain $\Omega$ and for $\alpha=1$, we denote $V_{\phi}^1(\Omega)$ as $V_{\phi}(\Omega)$ and call it the weighted volume of $\Omega$ for simplicity.

 By Lemma \ref{lem-mono}, we know that the flow \eqref{flow-WVPF} is a weighted volume preserving flow, i.e., $V_{\phi}(\Omega_t)\equiv V_{\phi}(\Omega_0)$ for $t>0$ as long as the flow exists, where $\Omega_t$ is the domain encloesd by the hypersurface $M_t$ and $S(r_0)$.
 
 Denote the weighted $\alpha$-volume of $B(r)$ by $V_{\phi}^{\alpha}(r)$, which are all strictly increasing functions of $r$. Consider the single variable functions $\xi_{\alpha}(x)$ which satisfy
\begin{equation}\label{eq-xi}
	V_{\phi}^{\alpha}(r)=\xi_{\alpha}(V_{\phi}(r))
\end{equation} 
for any $r\in[r_0,\bar{r})$, then the functions $\xi_{\alpha}(x)$ are all well-defined. Similarily, there exists a family of well-defined single variable functions $\chi_{i,\alpha}(x)$ ($i=0,1$) which satisfy
\begin{equation}\label{eq-chi}
A_{i,\phi}(r)=\chi_{i,\alpha}(V_{\phi}^{\alpha}(r))
\end{equation}
for any $r\in[r_0,\bar{r})$, where $A_{0,\phi}(r)$ is the weighted area of $S(r)$ and $A_{1,\phi}(r)$ is the weighted mean curvature integral of $S(r)$.

As the first result of this paper, we prove the following convergence result for the flow \eqref{flow-WVPF} .

\begin{thm}\label{Thm-main}
	Let $\mathbf{N}^{n+1}$ be a rotationally symmetric space equipped with the metric \eqref{eq-metric} and the function $\phi$ satisfies $(\phi')^2-\phi\phi''\geq 0$. Let $X_0:\mathbb{S}^n\to \mathbf{N}^{n+1}$ be a smooth embedding such that $M_0=X_0(\mathbb{S}^n)$ is a graphical hypersurface and $\Omega_0$ is the domain encloesd by $M_0$ and the slice $S(r_0)$. Then the weighted volume preserving flow \eqref{flow-WVPF} has a unique smooth graphical solution $M_t$ for all time $t\in[0,\infty)$, the solution $M_t$ converges exponentially to a slice of $\mathbf{N}^{n+1}$ with radius $r_{\infty}$ in the smooth topology, and $r_{\infty}$ is determined by the weighted volume $V_{\phi}(\Omega_0)$.
\end{thm}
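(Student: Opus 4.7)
My plan is to reparameterize the flow as a scalar quasilinear parabolic equation on $\mathbb{S}^n$ for the radial graph function $r(\theta,t)$, prove a priori estimates to all orders, use them to extend the flow to $[0,\infty)$, and then establish exponential convergence to a slice via a monotone geometric quantity combined with a linearization argument near the limit. While $M_t$ stays graphical I write $M_t=\{(r(\theta,t),\theta)\}$ and express $u$ and $H$ in the standard way in terms of $r,\nabla r,\nabla^2 r$ in the metric \eqref{eq-metric}, so that \eqref{flow-WVPF} becomes
\begin{equation*}
\partial_t r=\frac{u}{\phi}\Bigl(n-\frac{uH}{\phi'}\Bigr).
\end{equation*}
At a spatial maximum of $r(\cdot,t)$ one has $\nabla r=0$ and $\nabla^2 r\leq 0$, forcing $u=\phi$ and $H\geq n\phi'/\phi$, so $\partial_t r\leq 0$; symmetrically $\partial_t r\geq 0$ at a minimum. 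Consequently $r_{\min}(0)\leq r(\theta,t)\leq r_{\max}(0)$, which simultaneously gives the $C^0$ bound and preservation of the graphical property.

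The main analytic difficulty is the gradient estimate: the equation is only uniformly parabolic as long as the angle $\langle\nu,\partial_r\rangle=u/\phi$ stays bounded below, and a naive maximum principle on $|\nabla r|^2$ fails to close because of the ambient curvature contributions. I would instead work with $v:=\phi/u=\sqrt{1+|\nabla r|^2_\sigma/\phi^2}$, compute $\partial_t v$ along the flow, and use the warped product curvature identities together with the hypothesis $(\phi')^2-\phi\phi''\geq 0$ to cancel the dangerous curvature terms against those arising from the $uH/\phi'$ piece of the speed, producing a differential inequality that closes under the maximum principle and bounds $v$ in terms of initial data. Once $v$ is controlled the equation is uniformly parabolic with smooth coefficients, so Krylov--Safonov provides $C^{2,\alpha}$ estimates, Schauder bootstrapping gives uniform $C^k$ bounds for all $k$, and a standard continuation argument extends the flow to $[0,\infty)$.

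For convergence, since $V_\phi(\Omega_t)$ is preserved by Lemma \ref{lem-mono}, the candidate limit $S(r_\infty)$ is uniquely determined by $V_\phi(B(r_\infty))=V_\phi(\Omega_0)$ through the strict monotonicity of $V_\phi^\alpha(r)$. I would then establish monotonicity of the weighted area: a direct computation gives
\begin{equation*}
\frac{d}{dt}A_{0,\phi}(M_t)=\int_{M_t}\Bigl(\frac{\phi''u}{\phi}+\phi'H\Bigr)\Bigl(n-\frac{uH}{\phi'}\Bigr)d\mu,
\end{equation*}
which, combined with the Minkowski-type identity $\int_{M_t}uH\,d\mu=n\int_{M_t}\phi'\,d\mu$ and the assumption $(\phi')^2-\phi\phi''\geq 0$, should reduce to a non-positive expression vanishing only on slices. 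The a priori bounds then force smooth subsequential convergence to $S(r_\infty)$. To upgrade to exponential convergence I would linearize the flow at $S(r_\infty)$ by setting $r=r_\infty+w$: a direct calculation shows that, modulo the integral constraint dictated by weighted volume preservation, $w$ satisfies an equation of the form $\partial_t w=a\,\Delta_\sigma w+bw$ with constants $a>0$ and $b\leq 0$, so the spectral gap of $-\Delta_\sigma$ on $\mathbb{S}^n$ restricted to the orthogonal complement of the constants yields exponential $L^2$ decay of $w$, which is promoted to exponential $C^\infty$ decay via the uniform higher-order bounds and standard interpolation.
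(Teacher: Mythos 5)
Your overall blueprint (graph reformulation, $C^0$ by maximum principle, $C^1$ by maximum principle, divergence-form bootstrap, continuation) matches the paper's architecture, and the $C^0$ argument you sketch is essentially the one used. But there are two substantive issues.

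\textbf{The $C^1$ estimate.} You only aim to show $v=\phi/u$ (equivalently $\omega=\sqrt{1+|D\gamma|^2}$ after the paper's change of variable $d\gamma/dr=1/\phi$) stays \emph{bounded}. The paper proves something strictly stronger and, for convergence, essential: at a spatial maximum of $|D\gamma|^2$ the evolution inequality closes to
\begin{equation*}
\partial_t |D\gamma|^2 \le -2(n-1)\frac{|D\gamma|^2}{\phi\phi'\omega}\le -\beta\,|D\gamma|^2
\end{equation*}
under $(\phi')^2-\phi\phi''\ge 0$, which gives \emph{exponential decay} of the gradient directly. This is the single estimate that drives the smooth exponential convergence to a slice. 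If you only bound $v$, you have not exhibited a mechanism for convergence, and the subsequent reasoning has to carry all the weight.

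\textbf{The convergence argument.} Your proposed route is monotonicity of $A_{0,\phi}(M_t)$ plus linearization near the limit, and here there is a real gap. The computation you write down, combined with the Minkowski identity and only the hypothesis $(\phi')^2-\phi\phi''\ge 0$, does \emph{not} reduce to a sign-definite expression. In the paper that monotonicity (Proposition~\ref{Prop-monto}) requires the full set of assumptions in Definition~\ref{defn-N} (including $\phi''>0$ and $(\phi')^2-\phi\phi''\le1$), the static condition \eqref{eq-sTa}, \emph{and} static convexity of $M_t$, and it is only proved in \S\ref{sec-inequality} as an ingredient in Corollary~\ref{Cor-wii}, not in the proof of Theorem~\ref{Thm-main}. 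Under the hypotheses of Theorem~\ref{Thm-main} alone you cannot invoke it, so the subsequential convergence step in your argument would be unjustified. The cleaner route the paper takes is to observe that the $C^1$ estimate already gives $|D\gamma|^2\to 0$ exponentially; together with the $C^0$ bound and weighted-volume conservation this pins down the limiting slice $S(r_\infty)$ and upgrades, via the uniform higher-order estimates and interpolation, to exponential $C^\infty$-convergence, bypassing both the monotone-functional and linearization steps you propose.

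A minor point: working with $\gamma$ rather than $r$ puts the scalar equation in divergence form \eqref{eq-qlpe}, after which the classical Ladyzhenskaya--Solonnikov--Uraltseva/Lieberman theory applies directly; working with $r$ in non-divergence form (Krylov--Safonov plus Schauder, as you suggest) would also work once uniform parabolicity is established, but the divergence structure is what makes the paper's bootstrap clean.

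So: correct skeleton and correct $C^0$ argument, but the $C^1$ target and the convergence mechanism both differ from the paper, and the convergence step as you propose it does not go through under the hypotheses of this theorem.
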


\begin{rem}
	The lower bound condition $(\phi')^2-\phi\phi''\geq 0$ is needed for obtaining the $C^1$ estimate along the flow \eqref{flow-WVPF}.  
\end{rem}

As shown in Lemma \ref{lem-mono}, we find that the weighted $\alpha$-volume $V_{\phi}^{\alpha}(\Omega)$ is monotone along the flow \eqref{flow-WVPF} under adequate conditions on $\phi''$ and $\alpha$, then the smooth convergence proved in Theorem \ref{Thm-main} yields the following geometric inequalities between the weighted volume and the weighted $\alpha$-volume of a domian $\Omega$.

\begin{cor}\label{Cor-VVphi}
    Let $\mathbf{N}^{n+1}$ be a rotationally symmetric space equipped with the metric \eqref{eq-metric} and the warped function $\phi$ satisfies $(\phi')^2-\phi\phi''\geq 0$. Assume that $\Omega\subset \mathbf{N}^{n+1}$ is a bounded domain encloesd by $S(r_0)$ and a graphical hypersurface $M$. Then we have:
    \begin{align}
        V_{\phi}^{\alpha}({\Omega})&\leq \xi_{\alpha}(V_{\phi}(\Omega)), \quad \text{if} \quad \phi''(\alpha-1)<0,\label{In-VVphi}\\
        V_{\phi}^{\alpha}({\Omega})&\geq \xi_{\alpha}(V_{\phi}(\Omega)), \quad \text{if} \quad \phi''(\alpha-1)>0,\label{In-VVphi2}
    \end{align} 
    where $\xi_{\alpha}$ is the function defined in equation \eqref{eq-xi}. The equality in \eqref{In-VVphi} or \eqref{In-VVphi2} holds if and only if $\Omega=B(r):=[r_0,r]\times\mathbb{S}^n$ for some $r$.
\end{cor}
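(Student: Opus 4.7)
The plan is to use the flow \eqref{flow-WVPF} as a deformation tool: running it from $M_0 = M$ carries $\Omega$ to a model slice domain $B(r_\infty)$ while preserving $V_\phi$, and the sign conditions on $\phi''(\alpha-1)$ control how $V_\phi^\alpha$ changes along the way. Since $M$ is graphical by hypothesis and $(\phi')^2 - \phi\phi'' \geq 0$, Theorem \ref{Thm-main} applies directly: the flow exists for all $t \in [0,\infty)$, stays graphical, and converges smoothly to a slice $S(r_\infty)$ with $r_\infty$ determined by $V_\phi(\Omega_0) = V_\phi(\Omega)$. In particular $V_\phi(B(r_\infty)) = V_\phi(\Omega)$.

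Next I would invoke the monotonicity encoded in Lemma \ref{lem-mono}. Differentiating under the integral sign and using the normal speed of \eqref{flow-WVPF} gives
\begin{equation*}
\frac{d}{dt} V_\phi^\alpha(\Omega_t) = \int_{M_t} (\phi')^\alpha \Bigl( n - \frac{uH}{\phi'} \Bigr)\, d\mu,
\end{equation*}
and Lemma \ref{lem-mono} identifies the sign of this integral in terms of $\phi''(\alpha-1)$: when $\phi''(\alpha-1) < 0$ the derivative is nonnegative, while when $\phi''(\alpha-1) > 0$ it is nonpositive. Integrating in $t$ and passing to the limit $t \to \infty$ using the smooth convergence $M_t \to S(r_\infty)$ yields
\begin{equation*}
V_\phi^\alpha(\Omega) \;\lessgtr\; V_\phi^\alpha(B(r_\infty)) \;=\; \xi_\alpha\bigl(V_\phi(B(r_\infty))\bigr) \;=\; \xi_\alpha\bigl(V_\phi(\Omega)\bigr),
\end{equation*}
with the direction dictated by the sign of $\phi''(\alpha-1)$. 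This is precisely \eqref{In-VVphi} and \eqref{In-VVphi2}.

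For the equality case, the analysis of Lemma \ref{lem-mono} must identify when $n - uH/\phi' \equiv 0$ pointwise on $M_t$, which is exactly when $M_t$ is a slice (this follows from the Minkowski-type characterization of umbilic hypersurfaces in warped products). Thus strict monotonicity holds unless each $M_t$ is a slice. If equality holds in \eqref{In-VVphi} or \eqref{In-VVphi2} at $t=0$, then since $V_\phi^\alpha(\Omega) = V_\phi^\alpha(B(r_\infty))$ and the monotonicity is strict off slices, $M_t$ must be a slice for all $t$; in particular $M = M_0 = S(r)$ for some $r$ and $\Omega = B(r)$.

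The analytic content (long-time existence, convergence, and the sign of the derivative) is entirely packaged in Theorem \ref{Thm-main} and Lemma \ref{lem-mono}, so the only real work here is bookkeeping the direction of the inequality and the equality statement. The mildly subtle point I expect to be the main obstacle is verifying that the equality discussion in Lemma \ref{lem-mono} is strong enough to force $M$ itself (not merely the limit) to be a slice; this should reduce to checking that the monotonicity integrand vanishes only on slices.
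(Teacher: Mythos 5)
Your proposal is correct and follows essentially the same route as the paper, which proves Corollary~\ref{Cor-VVphi} in one line by combining Theorem~\ref{Thm-main} (long-time existence, graphicality, convergence to a slice, conservation of $V_\phi$) with Lemma~\ref{lem-mono} (signed monotonicity of $V_\phi^\alpha$ with strictness off slices). One small inaccuracy in your discussion of the equality case: the mechanism in the paper's proof of Lemma~\ref{lem-mono} is not pointwise vanishing of the speed $n-uH/\phi'$, but rather the integration-by-parts identity $\partial_t V_\phi^\alpha(\Omega_t) = -(\alpha-1)\int_{M_t}\phi(\phi')^{\alpha-2}\phi''\,|\nabla r|^2\,d\mu_t$, whose integrand has a definite sign and vanishes precisely when $|\nabla r|\equiv 0$, i.e.\ when $M_t$ is a slice; this is what makes the strictness statement in Lemma~\ref{lem-mono} available without invoking any separate Minkowski-type rigidity. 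Since you cite Lemma~\ref{lem-mono} directly, your conclusion stands in any case.
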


The mean curvature type flow 
\begin{equation}\label{Flow-mct}
    \frac{\partial}{\partial t}X(x,t)=(n\phi'-uH)\nu(x,t)
\end{equation}
was firstly introduced  by Guan and Li\cite{GL-2015} in space forms, they proved that for any smooth, compact, star-shaped initial hypersurface, the solution of the flow \eqref{Flow-mct} converges exponentially to a geodesic sphere in the smooth topology. The flow has a nice feature that along the flow \eqref{Flow-mct}, the volume of the enclosed domain is constant and the surface area is monotonically decreasing, which yields that the flow hypersurface $M_t$ converges to a solution of the isoperimetric problem in space forms. Together with Wang\cite{GLW-2019}, they generalized this result to warped product spaces under assumptions on the warping function $\phi$ and Ricci curvature of the base manifold. Using the flow \eqref{Flow-mct}, the first author and Li\cite{LP-2023} solved the isoperimetric problem in the general Riemannian manifold which admits a non-trivial conformal vector field in a recent work. The flow \eqref{Flow-mct} was also generalized to the fully-nonlinear version by Guan and Li\cite{GL-2018} in the Euclidean space and by Chen, Guan, Li and Scheuer\cite{CGLS-2022} in the sphere, by replacing the mean curvature $H$ to  general curvature functions $\sigma_{k+1}/\sigma_{k},k=1,\dots,n$. There are many other interesting results concerning the locally constrained curvature flows, such as \cites{BGL-note,CS-2022,HLW-2022,KWWW-2022,SX-2019}.


Locally constrained curvature flows are considered mainly in space forms in the literature. The reason is that space forms have constant sectional curvature, so it is easy to deal with the terms involving Riemannian curvature tensors and their derivatives. Meanwhile, the quermassintegrals in space forms are well-defined and have a nice varaitional property. Then it is direct to introduce new locally constrained flows using the variational property and Minkowski identities(see \cite{GL-2015}). However, in the general warped product spaces instead of space forms, there are many difficulties to overcome. The main difficulty is that the long time existence of the flow or applications to geometric inequalities always require the preserving of some kind of convexity. In this paper, we will show that the static convexity is preserved along the flow \eqref{flow-WVPF} under an initial gradient assumption(see Theorem \ref{preservingofconvex} for details).

In the remaining part of this section, we deal with the weighted geometric inequalities for static convex domains in static rotationally symmetric spaces, hence we first give the following definitions:
\begin{defn}
	Let $\mathfrak{M}$ be a Riemannian manifold, we denote $g_{\mathfrak{M}},\nabla_{\mathfrak{M}},\nabla^2_{\mathfrak{M}},\Delta_{\mathfrak{M}}$ and $\text{Ric}_{\mathfrak{M}}$ as the metric, Levi-Civita connenction, Hessian operator, Laplacian operator and Ricci tensor on $\mathfrak{M}$ respectively. Additionally, we assume that $V$ is a smooth nontrival function on $\mathfrak{M}$.
	
	We say that a Riemannian triple $(\mathfrak{M},g_{\mathfrak{M}},V)$ is static if
	\begin{equation}\label{eq-static}
		\Delta_{\mathfrak{M}}{V}g_{\mathfrak{M}}-\nabla^2_{\mathfrak{M}}{V}+V\text{Ric}_{\mathfrak{M}}=0.
	\end{equation}
	Furthermore, We say that a Riemannian triple $(\mathfrak{M},g_{\mathfrak{M}},V)$ is sub-static, if there exists a smooth $(0,2)-$tensor $Q$ on $\mathfrak{M}$ such that
	\begin{equation}
		V\cdot Q:=\Delta_{\mathfrak{M}}{V}g_{\mathfrak{M}}-\nabla^2_{\mathfrak{M}}{V}+V\text{Ric}_{\mathfrak{M}}\geq 0.
	\end{equation}
	In both cases, we call $V$ a potential function.
\end{defn}
\begin{rem}
	By Lemma \ref{Lem-stacon} in \S \ref{sub-sta}, if $\mathbf{N}^{n+1}$ is a rotationally symmetric space equipped with the metric \eqref{eq-metric}, then it is equivalent that the Riemannian triple $(\mathbf{N}^{n+1},\bar{g},\phi')$ is static and $\mathbf{N}^{n+1}$ has constant scalar curvature. 
\end{rem}

Next we introduce a convexity condition which is named as static convex by Brendle-Wang\cite{BW-2014} for its correspondence in the static spacetime.

\begin{defn}
    Let $(\mathfrak{M},g_{\mathfrak{M}},V)$ be a sub-static Riemannian triple, a hypersurface $M$ of $\mathfrak{M}$ is called static convex(resp. strictly static convex), if the second fundamental form $\{h_{ij}\}$ satisfies
    \begin{equation}\label{In-sscon}
    h_{ij}\geq(\text{resp.}>)\frac{V_{,\nu}}{V} g_{ij}
    \end{equation}
    everywhere on $M$.
\end{defn}

In our case, we consider the static rotationally symmetric space $\mathbf{N}^{n+1}$ equipped with the metric \eqref{eq-metric}, then the potential function $V=\phi'$ satisfies the static condition \eqref{eq-static} and hence the static(resp. strictly static) convex inequality \eqref{In-sscon} is equivalent to 
\begin{equation}
    h_{ij}\geq(\text{resp.}>)\frac{u\phi''}{\phi\phi'} g_{ij}.
\end{equation}
Let $M$ be a closed, emdedded hypersurface in $\mathbf{N}^{n+1}$, then there are two cases: (i) $M$ is the boundary of a bounded domain $\Omega$; (ii) There is a bounded domain $\Omega$ with $\partial\Omega= S(r_0)\cup M$. In both cases, we say that $\Omega$ is a static convex(resp. strictly static convex) domain, if $M$ is static convex(resp. strictly static convex). We will show the existence of a strictly static convex point on a closed, embedded hypersurface $M$ in $\mathbf{N}^{n+1}$ in Proposition \ref{sconvpoint} unless $M=S(r_0)$ with $((\phi')^2-\phi\phi'')(r_0)=0$.

It's well-known that the static condition (\ref{eq-static}) guarantees that the Lorentzian warped product $-V^2dt\otimes dt+g_{\mathfrak{M}}$ is a solution of Einstein's equation. Moreover, static manifolds have been studied in connection with questions in general relativity (see e.g. \cites{CJ-2000,WWZ-2017}). Recently, some progress has been made in the geometric inequalities for domains in static or sub-static Riemannian triples. Brendle et.al.\cites{Brendle13,BHW16} established some of the inequalities in sub-static Riemannian triples with certain warped product structure. Li and Xia \cite{XL-2019} proved the Heintze-Karcher type inequality and Minkowski type inequality in the sub-static Riemannian triple  using their useful integral formula (built in \cite{QX-2015} firstly). Their graceful proofs require no assumptions on rotational symmetry or topological constraints for the ambient manifolds. 

The flow \eqref{flow-WVPF} was firstly introduced by Hu and Li\cite{HL-2019} in the hyperbolic space $\mathbb{H}^{n+1}$. For any bounded domain $\Omega$ with smooth boundary $M=\partial\Omega$. They defined the weighted curvature integrals as follows:
\begin{align*}
    W_0^{\phi'}(\Omega)&=\int_{M} u d\mu=\int_{\Omega} {(n+1)\phi'} dvol,\quad W_{n+1}^{\phi'}(\Omega)=\int_{M}{\phi' E_n}d\mu,\\
    W_k^{\phi'}(\Omega)&=\int_{M}{\phi' E_{k-1}}d\mu=\int_{M}{u E_k} d\mu,\quad k=1,\dots,n.
\end{align*}

Here $E_k:=\sigma_k/C_n^k$, is the kth normalized mean curvature of $M$. They proved the following convergence result:
\begin{thmA}[\cite{HL-2019}]
    Let $X_0$ be a smooth embedding of a closed $n$-dimensional manifold $M$ in $\mathbb{H}^{n+1}$ such that $M_0=X_0(M)$ is star-shaped. Then any solution $M_t=X(M,t)$ of \eqref{flow-WVPF} remains star-shaped for $t>0$ and it converges to a geodesic sphere $\partial B_{r_{\infty}}$ centered at the origin in the $C^{\infty}$-topology as $t\to\infty$, where the radius $r_{\infty}$ is uniquely determined by $W_0^{\phi'}(B_{r_{\infty}})=W_0^{\phi'}(\Omega_0)$. Moreover, if the initial hypersurface $M_0=X_0(M)$ is static convex, then the flow hypersurface $M_t=X(t,M)$ becomes strictly static convex for $t>0$.
\end{thmA}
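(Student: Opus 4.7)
The plan is to reduce the flow \eqref{flow-WVPF} to a scalar parabolic PDE for the radial function $r(\theta,t)$ of the graph, establish uniform a priori estimates up to $C^2$, and then appeal to standard parabolic theory for long-time existence and smooth convergence. Two structural features of $\mathbb{H}^{n+1}$ are decisive: it has constant sectional curvature $-1$, so all ambient curvature terms are universal constants, and $\phi=\sinh r$, $\phi'=\cosh r$ satisfies $(\phi')^2-\phi\phi''\equiv 1$, placing Theorem A squarely in the good range covered by the gradient estimate of the present paper.

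First I would prove that star-shapedness persists. Setting $v:=\phi(r)/u$, which equals $1$ precisely on slices, one computes $\partial_t v$ and applies the parabolic maximum principle at a spatial maximum; the troublesome second-order terms should combine with $(\phi')^2-\phi\phi''\ge 0$ to give $\partial_t v_{\max}\le 0$, so $v\le \max v|_{t=0}$. A $C^0$ bound follows from conservation of $W_0^{\phi'}(\Omega_t)$ together with the $v$-bound, using that the speed $F=n-uH/\phi'$ vanishes identically on every slice. Next, the standard evolution of the shape operator $h_i^j$, combined with the $C^1$ bound, yields a uniform upper bound on the mean curvature $H$ via a Tso-type maximum principle, and hence on all principal curvatures; once $|A|$ is controlled, standard parabolic regularity gives $C^\infty$ estimates and long-time existence.

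For convergence to a slice, the monotone quantity is the weighted area $A_{0,\phi}(M_t)=\int_{M_t}\phi'\,d\mu$. Using the Minkowski-type identity $\int_{M_t}\phi'H\,d\mu=n\int_{M_t}u\,d\mu$ and the flow equation, a short computation yields $\frac{d}{dt}A_{0,\phi}(M_t)\le 0$, with strict decrease unless $uH/\phi'\equiv n$, i.e.\ unless $M_t$ is a slice. Combined with the smooth estimates this forces smooth subsequential convergence to some slice $\partial B_{r_\infty}$, and conservation of weighted volume uniquely pins down $r_\infty$. Exponential decay then follows by linearizing around $\partial B_{r_\infty}$: the linearization is self-adjoint with a spectral gap on the subspace cut out by the weighted-volume constraint, and this gap bootstraps to exponential convergence in every $C^k$ norm via interpolation.

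The static-convexity statement is the most delicate. Introduce $T_i^j:=h_i^j-\frac{u\phi''}{\phi\phi'}\delta_i^j$ and compute $\partial_t T_i^j$ using the evolution of $h_i^j$, the Codazzi equations, and the static identity $\Delta_{\mathbf{N}}\phi'\,\bar g-\nabla^2_{\mathbf{N}}\phi'+\phi'\,\mathrm{Ric}_{\mathbf{N}}=0$, which in $\mathbb{H}^{n+1}$ is automatic from constant curvature. The aim is to reach an equation of the form
\[
\partial_t T_i^j = a^{kl}\nabla_k\nabla_l T_i^j + b^k\nabla_k T_i^j + N_i^j,
\]
where the zero-order term $N_i^j$ is nonnegative on every null eigenvector of $T$, so Hamilton's tensor maximum principle preserves $T\ge 0$, and strict positivity for $t>0$ follows from the strong maximum principle. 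The main obstacle is precisely this algebraic cancellation: one must show that the error terms from commuting covariant derivatives and from differentiating $u\phi''/(\phi\phi')$ in time assemble, via the static equation and the identity $(\phi')^2-\phi\phi''\equiv 1$, into a manifestly good term at null directions of $T$. This is the step where the static structure of the ambient space, rather than merely a warped-product structure, is essential.
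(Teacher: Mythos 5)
Your overall scaffold --- rewrite the flow as a scalar parabolic equation for the radial variable, get a priori estimates, then invoke Andrews--Hamilton tensor maximum principle on $S_{ij}=h_{ij}-\frac{u\phi''}{\phi\phi'}g_{ij}$ --- matches the strategy the present paper uses for its generalization (Theorems \ref{Thm-main} and \ref{preservingofconvex}) and is the right outline for Theorem A. But there is a real gap in your argument for convergence. You propose to drive the convergence by monotonicity of $A_{0,\phi}(M_t)=\int_{M_t}\phi'\,d\mu$, but this monotonicity is \emph{not} available under the mere star-shapedness assumed in the first part of Theorem A. Specializing the paper's Proposition \ref{Prop-monto} to $\mathbb{H}^{n+1}$ (so $(\phi')^2-\phi\phi''-1\equiv 0$, $\overline{\mathrm{Ric}}(\partial_r,\partial_r)=\overline{\mathrm{Ric}}(\nu,\nu)$), the inequality reduces to $\frac{d}{dt}A_{0,\phi}\le -\int_M\frac{\phi\phi''}{\phi'}\bigl(h^{st}-\frac{u\phi''}{\phi\phi'}g^{st}\bigr)\langle\partial_r,e_s\rangle\langle\partial_r,e_t\rangle\,d\mu$, whose sign depends precisely on static convexity of $M_t$. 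So you have introduced a circularity: you cannot use weighted-area monotonicity to prove convergence before you know static convexity is preserved, yet the first part of Theorem A makes no such hypothesis. (Relatedly, the identity you quote, $\int\phi'H\,d\mu=n\int u\,d\mu$, is not the first Minkowski identity; the correct one is $n\int\phi'\,d\mu=\int uH\,d\mu$, and the correct manipulation for $\int\phi'H$ involves the second Minkowski identity together with Newton--MacLaurin.) The correct mechanism, which the paper makes explicit in Proposition \ref{prop-C1}, is that the gradient bound itself improves: at spatial maxima $\partial_t|D\gamma|^2\le -\beta|D\gamma|^2$, so $|D\gamma|^2$ decays exponentially. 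That single estimate forces convergence to a slice and, by interpolation against the uniform $C^k$ bounds, gives exponential $C^\infty$ convergence --- no linearization or spectral-gap argument is needed, and no auxiliary monotone quantity beyond $\|D\gamma\|_\infty$ is required.

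Two further remarks. Once $C^0$ and $C^1$ bounds are in hand, the scalar equation \eqref{flow-QF} is quasi-linear of divergence form, so De Giorgi--Nash--Moser plus Schauder gives the higher-order estimates directly; a Tso-type curvature estimate, which typically leans on convexity, is unnecessary here. Finally, you correctly identify the crux of the static-convexity part --- that the zeroth-order term of the evolution of $S_{ij}$ must be signed at null directions --- but you stop exactly at the point where the work is. In $\mathbb{H}^{n+1}$ the miracle is that $(\phi')^2-\phi\phi''-1\equiv 0$ wipes out every curvature contribution in $N_{11}$ (see equation \eqref{eq-N11}), leaving only $-2\frac{u(\phi'')^2}{(\phi')^3}\sum_k S_{kk}\langle\partial_r,e_k\rangle^2$, which is exactly cancelled by the refined null-vector term $2\frac{u}{\phi'}\sum_{p\ge 2}\frac{(\nabla_1 S_{1p})^2}{S_{pp}}$ once one uses Codazzi to express $\nabla_1 S_{1p}$. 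That cancellation, not an abstract appeal to the static identity, is the content of the argument; as written, your proposal asserts rather than proves it.
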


The flow \eqref{flow-WVPF} in $\mathbb{H}^{n+1}$ has an advantage that it preserves $W_0^{\phi'}(\Omega_t)$. Meanwhile, they found that $W_k^{\phi'}(\Omega_t),k=1,\dots,n-1$ are monotone decreasing in time $t$ as long as the flow admits a static convex solution. Hence they proved the following inequalities between the weighted curvature integrals:
\begin{thmB}
    Let $\Omega$ be a static convex domain with smooth boundary $M$ in $\mathbb{H}^{n+1}$. For $1\leq k\leq n+1$, there holds
    \begin{equation}\label{in-wci}
        W_{k}^{\phi'}(\Omega)\geq h_k\circ h_0^{-1}(W_0^{\phi'}(\Omega))
    \end{equation}
    Equality holds in \eqref{in-wci} if and only if $\Omega$ is a geodesic ball centered at the origin. Here $h_k:[0,\infty)\to\mathbb{R}^{+}$ is a monotone function defined by $h_k(r)=W_k^{\phi'}(B_r)=\omega_n\sinh^{n+1-k}{r}\cosh^k{r}$, the $k$th weighted curvature integral for a geodesic ball of radius $r$, and $h_0^{-1}$ is the inverse function of $h_0$.
\end{thmB}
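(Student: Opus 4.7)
The plan is to run the flow \eqref{flow-WVPF} in the hyperbolic case $\phi(r)=\sinh r$ starting from $M_0=M=\partial\Omega$, and exploit the three dynamical facts recorded in Theorem A: $W_0^{\phi'}(\Omega_t)$ is conserved along the flow, static convexity is preserved (in fact $M_t$ becomes strictly static convex for $t>0$), and $M_t$ converges smoothly to a geodesic sphere $\partial B_{r_\infty}$ centered at the origin with $h_0(r_\infty)=W_0^{\phi'}(\Omega_0)$.

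The heart of the argument is to show that each $W_k^{\phi'}(\Omega_t)$, for $1\leq k\leq n+1$, is monotone non-increasing along the flow. I would compute $\frac{d}{dt}\int_{M_t}\phi'E_{k-1}\,d\mu$ by combining $\partial_t\phi'=\phi''(u/\phi)F$, $\partial_t\,d\mu=HF\,d\mu$, and the standard formula for $\partial_t E_{k-1}$ under a normal speed $F=n-uH/\phi'$ (which in constant curvature $-1$ carries explicit $\bar R_{ijkl}$-contributions that simplify nicely). After applying the hyperbolic Minkowski-type identity
\begin{equation*}
\int_M \phi'\,E_{k-1}\,d\mu = \int_M u\,E_k\,d\mu
\end{equation*}
to regroup terms and integrating by parts on $M_t$, the integrand should reorganize as a nonpositive combination of Newton-MacLaurin differences $E_{k-1}^2-E_{k-2}E_k$ (and analogous expressions) weighted by a factor whose sign is fixed by the static convexity pinching $h_{ij}\geq(u\phi''/\phi\phi')g_{ij}$. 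This yields $\frac{d}{dt}W_k^{\phi'}(\Omega_t)\leq 0$.

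Passing to $t\to\infty$ and using the smooth convergence from Theorem A produces
\begin{equation*}
W_k^{\phi'}(\Omega)=W_k^{\phi'}(\Omega_0)\geq\lim_{t\to\infty}W_k^{\phi'}(\Omega_t)=W_k^{\phi'}(B_{r_\infty})=h_k(r_\infty)=h_k\circ h_0^{-1}\bigl(W_0^{\phi'}(\Omega)\bigr),
\end{equation*}
which is the asserted inequality. For the equality case, equality in Theorem B saturates the monotonicity for all $t$, forcing pointwise equality in Newton-MacLaurin at every time; combined with smooth convergence to a slice centered at the origin, this forces $\Omega$ itself to be a geodesic ball centered at the origin.

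The principal obstacle is the monotonicity step, because only the relatively weak static convexity (not full $k$-convexity) is assumed. One must identify an exact algebraic rewriting of $\frac{d}{dt}\int_M\phi'E_{k-1}\,d\mu$ into a Newton-MacLaurin combination whose sign is controlled precisely by the static convex pinching, rather than by a stronger positivity of all $\sigma_j$. A subsidiary difficulty is the endpoint $k=n+1$: since $W_{n+1}^{\phi'}(\Omega)=\int_M\phi'E_n\,d\mu$ has no higher Minkowski partner on $M$, one must either use a divergence-type identity to move it to an interior expression or push the $k=n$ monotonicity to the limit using the Gauss equation.
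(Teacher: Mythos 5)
Theorem B is not proved in this paper: it is stated as a cited result of Hu--Li~\cite{HL-2019}, so there is strictly no ``paper's own proof'' to compare against. That said, your strategy is exactly the one the present paper executes for the analogous, more general statements (Lemma~\ref{lem-mono}, Proposition~\ref{Prop-monto} and Corollary~\ref{Cor-wii}): run the flow~\eqref{flow-WVPF}, observe that $W_0^{\phi'}(\Omega_t)$ (here $V_\phi$) is conserved, show that the higher weighted curvature integrals are monotone non-increasing by rewriting the time derivative via Minkowski identities and Newton--MacLaurin inequalities with the sign of the remainder governed by static convexity, and pass to the convergent limit using Theorem~A. Your outline of the equality case via strictness of the Newton--MacLaurin step and strict monotonicity for $t>0$ is likewise what the paper does in the proof of Corollary~\ref{Cor-wii}.

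One point worth making explicit: in $\mathbb{H}^{n+1}$ the static identity $(\phi')^2-\phi\phi''\equiv 1$ holds, so all of the $\overline{Ric}(\partial_r,\partial_r)-\overline{Ric}(\nu,\nu)$ and $\overline{Ric}(e_i,\nu)$ correction terms that appear in~\eqref{eq-Min2},~\eqref{eq-Min3} and~\eqref{eq-LA} vanish identically, and the monotonicity computation reduces to pure Newton--MacLaurin differences plus the sign of $\int_M\frac{u}{\phi'}\Delta_g\phi'\,d\mu$, which is $\leq 0$ because $\Delta_g\phi'=\frac{\phi''}{\phi}(n\phi'-uH)$ and the integrand contracts the static-convex tensor $h^{ij}-\frac{u\phi''}{\phi\phi'}g^{ij}$ against $\langle\partial_r,e_i\rangle\langle\partial_r,e_j\rangle$ as in~\eqref{In-eq-L}. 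This resolves the ``principal obstacle'' you flagged: static convexity is not used to gain full $k$-convexity, it is used only for the sign of this single boundary term, while the Newton--MacLaurin inequalities hold unconditionally. For the endpoint $k=n+1$ the present paper says nothing (its $n\geq 2$ and $n\geq 3$ restrictions for $i=0,1$ confirm that each $k$ requires a separate Minkowski step), so to fully cover $k=n+1$ you would indeed need the separate divergence argument you mention, as Hu--Li do in the original source.
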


In this paper, we want to generalize their results to a broader class of ambient warped product spaces under mild assumptions on $\phi$. However, we cannot prove that the static convexity is preserved along the flow \eqref{flow-WVPF} in general warped product space with only static convexity required on $M_0$. In order to state our results, we introduce the following conception:
 
 \begin{defn}
     We say that a graphical hypersurface $M=\{(r(\theta),\theta)\in[r_0,\bar{r})\times\mathbb{S}^n\}$ is $\varepsilon$-close to a slice of $\mathbf{N}^{n+1}$ in the $C^1$ sense, if there exists a constant $0\leq\varepsilon<\infty$, such that
     \begin{align}\label{epcloser}
	|D r|^2 \leq \varepsilon \phi^2
\end{align}
    holds everywhere on $M$, where $D$ is the gradient with respect to the round metric $\sigma$ on $\mathbb{S}^n$. Furthermore, if we use a graphical representation for the support function $u$ and introduce a new variable $\gamma$( see \S \ref{subsection-graph} for a concrete definition), then condition \eqref{epcloser} is equivalent to
    \begin{equation}\label{epclose}
    |D\gamma|^2\leq \varepsilon \,\ \text{or} \,\ \frac{u^2}{\phi^2}\geq \frac{1}{1+\varepsilon}
    \end{equation}
 \end{defn}

Then we can prove that static convexity is preserved along the flow \eqref{flow-WVPF}, provided that the initial hypersurface $M_0$ is $\varepsilon_0$-close to a slice in $\mathbf{N}^{n+1}$ in the $C^1$ sense for some $\epsilon_0>0$.

\begin{thm}\label{preservingofconvex}
    Let $\mathbf{N}^{n+1}$ be a static rotationally symmetric space under the assumptions in Definition \ref{defn-N}. Assume that $M_0$ is a smooth, static convex and graphical hypersurface lies in $B(R)\subset \mathbf{N}^{n+1}$ for some $R>0$. Then there exists a constant $\epsilon_0>0$ depending only on $n$ and $R$, such that  if $M_0$ is $\epsilon_0$-close to a slice of $\mathbf{N}^{n+1}$ in the $C^1$ sence, then the flow \eqref{flow-WVPF} starting from $M_0$ remains to be static convex for all time $t\in[0,\infty)$. Moreover, the evolving hypersurface $M_t$ becomes strictly static convex for $t>0$ unless $M_t\equiv S(r_0)$ with $((\phi')^2-\phi\phi'')(r_0)=0$.
\end{thm}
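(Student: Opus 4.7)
The plan is to study the static-convexity tensor
\begin{equation*}
Z^i_{\ j} := h^i_{\ j} - \psi\,\delta^i_{\ j}, \qquad \psi := \frac{u\phi''}{\phi\phi'},
\end{equation*}
and to show that the condition $Z \geq 0$ is preserved by the flow \eqref{flow-WVPF} via Hamilton's tensor maximum principle; the strictly static convex claim for $t>0$ will then follow from the strong maximum principle applied to $\lambda_{\min}(Z)$.

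First I would record the evolution equations for $g_{ij}$, $\nu$, $u$ and $h^i_{\ j}$ under an arbitrary normal speed $F\nu$ and specialize to $F=n-uH/\phi'$. Combining these with the structural identity for $\bar\nabla^2\phi'$, which is available because $\phi'$ is a function of $r$ alone in the warped product $\bar{g}=dr^2+\phi^2\sigma$, and with the static equation \eqref{eq-static} for $(\mathbf{N}^{n+1},\bar{g},\phi')$ to eliminate the ambient Ricci tensor, one obtains a parabolic equation of the form
\begin{equation*}
\partial_t Z^i_{\ j} \;=\; a^{kl}\nabla_k\nabla_l Z^i_{\ j} + B^i_{\ j},
\end{equation*}
where $a^{kl}$ is positive definite (coming from the linearization of $uH/\phi'$ in the principal curvatures) and $B^i_{\ j}$ collects the zeroth- and first-order reaction terms. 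Second, I would evaluate $B(\xi,\xi)$ at a spatial minimum of $\lambda_{\min}(Z)$ in a null direction $\xi$, where $Z(\xi,\xi)=0$. Using Codazzi's identity and the static structure, $B(\xi,\xi)$ decomposes into a manifestly nonnegative piece (sums of squares paired with factors of $Z$ that vanish on the kernel) plus correction terms proportional to $\phi^2-u^2$, which represent precisely the warped-product deviation from the space-form setting of Hu--Li. The $C^0$ bound $r\leq R$ preserved by Theorem~\ref{Thm-main}, together with the hypothesis $|D\gamma|^2\leq\varepsilon_0$, bounds the bad part by $C(n,R)\varepsilon_0$ times the good part, so that for $\varepsilon_0$ small enough $B(\xi,\xi)\geq 0$, which is exactly what Hamilton's tensor maximum principle needs.

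A prerequisite I would establish first is that the $C^1$ closeness $|D\gamma|^2\leq\varepsilon_0$ is itself preserved along the flow; this should follow from a scalar maximum principle for $|D\gamma|^2$ (equivalently for $u^2/\phi^2$ via \eqref{epclose}), exploiting the specific form of the speed $n-uH/\phi'$ at a spatial maximum of $|D\gamma|^2$. For the strict convexity assertion, if $\lambda_{\min}(Z)(x_0,t_0)=0$ for some $t_0>0$, the strong maximum principle forces $Z\equiv 0$ on $M\times[0,t_0]$, and unraveling the equality case (the evolving hypersurface is umbilic with $h^i_{\ j}=\psi\,\delta^i_{\ j}$ and $\psi$ constant in space) reduces to $M_t\equiv S(r_0)$ with $((\phi')^2-\phi\phi'')(r_0)=0$. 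The main obstacle I expect is the algebraic bookkeeping in the second step: the reaction $B$ contains many contributions from the warping, and one must group them so that the bad terms factor through $\phi^2-u^2$ while the remaining good terms absorb them via the vanishing of $Z$ on the kernel direction. In the space-form case these warping corrections vanish identically and no $C^1$-closeness is required; here the $\varepsilon_0$-closeness is precisely what replaces the rigid space-form structure.
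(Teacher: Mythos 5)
Your choice of tensor $Z^i_{\ j}=h^i_{\ j}-\psi\,\delta^i_{\ j}$ (equivalently the paper's $S_{ij}=h_{ij}-\frac{u\phi''}{\phi\phi'}g_{ij}$), the use of the static equation to eliminate the ambient Ricci tensor, and the observation that the $C^1$-closeness $|D\gamma|^2\le\varepsilon_0$ is itself preserved by a scalar maximum principle, are all exactly the paper's ingredients; the high-level strategy is on target. However, there is a genuine gap in the second step. You propose to evaluate the reaction term $B(\xi,\xi)$ at a null direction $\xi$ and to show it is nonnegative after grouping, i.e.\ to apply the classical Hamilton tensor maximum principle. In fact the reaction term alone is \emph{not} nonnegative here: after substituting the static identity \eqref{eq-stat-2}, the null-direction reaction $N_{11}$ (see \eqref{eq-N11}) contains the term $-2\frac{u(\phi'')^2}{(\phi')^3}\sum_k S_{kk}\langle\partial_r,e_k\rangle^2$, which has a definite sign ($\phi''>0$) and cannot be absorbed by the factor $\phi^2-u^2$. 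The paper must invoke Andrews's refinement of the tensor maximum principle (Theorem \ref{tensormax}), whose condition is $N_{11}+\sup_\Lambda 2a^{k\ell}(2\Lambda_k^p\nabla_\ell S_{ip}v^i-\Lambda_k^p\Lambda_\ell^q S_{pq})\ge 0$; the supremum contributes $2\frac{u}{\phi'}\sum_{p\ge 2}\frac{(\nabla_1 S_{1p})^2}{S_{pp}}$, and upon computing $\nabla_1 S_{1p}$ via the Codazzi equation and the static structure, one term of this sum cancels the bad term in $N_{11}$ exactly. Without that gradient correction, your inequality fails, so the use of the Andrews refinement is not an optional sharpening but an essential part of the argument.

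Two smaller points. First, the quantitative mechanism that makes $\varepsilon_0$ depend only on $n$ and $R$ is not a direct ``bad $\leq C(n,R)\varepsilon_0\cdot$ good'' bound: the paper sets $I:=\frac{1-\langle\partial_r,e_1\rangle^2}{1-u^2/\phi^2-\langle\partial_r,e_1\rangle^2}\ge 1/\varepsilon_0$, applies a Cauchy--Schwarz estimate between the $\mathrm{tr}\,S$ and $1/\mathrm{tr}\,S$ contributions, and exploits the universal static constant $C_0>0$ from \eqref{eq-stat-2} to get a coefficient growing like $\sqrt{C_0(2/\varepsilon_0-3n)}$, which dominates the bounded positive part for small $\varepsilon_0$. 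Second, for strict convexity, ``unraveling the equality case'' is not immediate from $Z\equiv 0$; the clean route, taken by the paper, is the standalone geometric fact (Proposition \ref{sconvpoint}) that any closed embedded hypersurface in a static rotationally symmetric space possesses a strictly static convex point unless it is $S(r_0)$ with $((\phi')^2-\phi\phi'')(r_0)=0$, so the strong maximum principle rules out $Z\equiv 0$ except in that degenerate case.
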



\begin{rem}
The lower bound condition $(\phi')^2-\phi\phi''\geq 0$ is needed for the preserving of static convexity along the flow \eqref{flow-WVPF}. Meanwhile, since we focus on the static convex hypersurface in Theorem \ref{preservingofconvex}, we need to ensure that the convergent limit of the flow, i.e. a slice of $\mathbf{N}^{n+1}$ has its principal curvature $\kappa_i\geq\frac{u\phi''}{\phi\phi'}=\frac{\phi''}{\phi'}$, which is equivalent to the lower bound condition by Remark \ref{rem-slice}.
\end{rem}



Under the assumptions of Theorem \ref{preservingofconvex}, we know that along the flow \eqref{flow-WVPF}, $V_{\phi}^{\alpha}(\Omega_t)$($\alpha\leq 1$) are monotone increasing and $A_{i,\phi}(M_t)$($i=1,2$) are monotone decreasing according to Lemma \ref{lem-mono} and Proposition \ref{Prop-monto}. Hence as applications of Theorem \ref{preservingofconvex}, we obtain a family of weighted geometric inequalities for static convex domains.
\begin{cor}\label{Cor-wii}
Let $\mathbf{N}^{n+1}$ be a static rotationally symmetric space under the assumptions in Definition \ref{defn-N}. Given a constant $R>r_0$, then there exists a constant $\varepsilon>0$, depending only on $n$ and $R$, such that for any static convex domain $\Omega\subset B(R)$ which is bounded by $S(r_0)$ and a smooth graphical hypersurface $M$ with $M$ being $\varepsilon$-close to a slice of $\mathbf{N}^{n+1}$ in the $C^1$ sense, the following weighted geometric inequalities hold:
	\begin{align}
		A_{0,\phi}(M)&\geq \chi_{0,\alpha}(V_{\phi}^{\alpha}(\Omega)),\quad \text{if}\quad n\geq 2,\label{In-wii}\\
            A_{1,\phi}(M)&\geq \chi_{1,\alpha}(V_{\phi}^{\alpha}(\Omega)),\quad \text{if}\quad n\geq 3,\label{In-wiii}
	\end{align} 
	where $\chi_{i,\alpha} (i=0,1$\,\,and\,\, $\alpha\leq 1)$ are the functions defined in Equation \eqref{eq-chi}. The equalities hold in \eqref{In-wii} or \eqref{In-wiii} if and only if $M$ a slice of $\mathbf{N}^{n+1}$.
\end{cor}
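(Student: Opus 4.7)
The plan is to deduce Corollary \ref{Cor-wii} by running the flow \eqref{flow-WVPF} starting from $M$ and combining the long-time convergence of Theorem \ref{Thm-main} with the static-convexity preservation of Theorem \ref{preservingofconvex} and the monotonicity of the weighted quantities $V_{\phi}^{\alpha}$ and $A_{i,\phi}$ furnished by Lemma \ref{lem-mono} and Proposition \ref{Prop-monto}. First I would choose $\varepsilon := \varepsilon_0(n,R)$, where $\varepsilon_0$ is the $C^1$-closeness constant produced by Theorem \ref{preservingofconvex}, and set $M_0 := M$, $\Omega_0 := \Omega$. Theorem \ref{Thm-main} then yields a smooth graphical solution $M_t$ of \eqref{flow-WVPF} for all $t \in [0,\infty)$, Theorem \ref{preservingofconvex} guarantees that each $M_t$ remains static convex, and $M_t$ converges exponentially in $C^{\infty}$ to a slice $S(r_{\infty})$, where $r_{\infty}$ is uniquely fixed by the preserved weighted volume $V_{\phi}(\Omega_0) = V_{\phi}(B(r_{\infty}))$.

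With static convexity in place along the entire flow, Lemma \ref{lem-mono} and Proposition \ref{Prop-monto} give the monotonicities
\[
\frac{d}{dt} V_{\phi}^{\alpha}(\Omega_t) \;\geq\; 0 \quad (\alpha \leq 1), \qquad \frac{d}{dt} A_{i,\phi}(M_t) \;\leq\; 0 \quad (i = 0, 1),
\]
valid in the stated dimensional ranges $n \geq 2$ for $i = 0$ and $n \geq 3$ for $i = 1$, with the time derivatives vanishing only on slices. Because both $A_{i,\phi}(r)$ and $V_{\phi}^{\alpha}(r)$ are strictly increasing in $r$, the composite $\chi_{i,\alpha}$ defined through \eqref{eq-chi} is strictly increasing. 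Passing to $t \to \infty$ therefore produces the chain
\[
A_{i,\phi}(M) = A_{i,\phi}(M_0) \;\geq\; A_{i,\phi}(S(r_{\infty})) = \chi_{i,\alpha}\bigl(V_{\phi}^{\alpha}(B(r_{\infty}))\bigr) \;\geq\; \chi_{i,\alpha}\bigl(V_{\phi}^{\alpha}(\Omega_0)\bigr) = \chi_{i,\alpha}\bigl(V_{\phi}^{\alpha}(\Omega)\bigr),
\]
which is the desired weighted inequality. For the equality case, if equality holds throughout, both $\geq$'s collapse, forcing $V_{\phi}^{\alpha}(\Omega_t)$ and $A_{i,\phi}(M_t)$ to be constant in $t$; by the sharp form of Proposition \ref{Prop-monto} this in turn forces $M_t$ to be a slice for every $t$, and in particular $M$ itself is the slice $S(r_{\infty})$.

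The assembly above is routine once the ingredients are in hand, so the genuine work lies entirely upstream: Theorem \ref{preservingofconvex}, which requires controlling the evolution of the second fundamental form against the threshold $\tfrac{u\phi''}{\phi\phi'}g_{ij}$ under the $C^1$-smallness hypothesis; and Proposition \ref{Prop-monto}, which demands the evolution identities for $V_{\phi}^{\alpha}$ and $A_{i,\phi}$ along \eqref{flow-WVPF} together with the algebraic inequalities on elementary symmetric polynomials that pin down the dimensional restrictions $n \geq 2$ and $n \geq 3$. Beyond quoting these, the only nontrivial link in the corollary itself is the verification that $M_0 = M$ satisfies the $C^1$-closeness assumption of Theorem \ref{preservingofconvex}, and this is arranged precisely by taking $\varepsilon \leq \varepsilon_0(n,R)$, after which the corollary follows cleanly.
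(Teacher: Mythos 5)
Your proposal is correct and follows essentially the same route as the paper: fix $\varepsilon=\varepsilon_0(n,R)$ from Theorem \ref{preservingofconvex}, run the flow \eqref{flow-WVPF} from $M$, use Theorem \ref{Thm-main} for long-time existence and convergence to a slice $S(r_\infty)$, note that $\chi_{i,\alpha}$ is strictly increasing, and chain the monotonicities from Lemma \ref{lem-mono} and Proposition \ref{Prop-monto} exactly as you wrote. The one minor imprecision is in the equality case: Proposition \ref{Prop-monto} as stated does not contain a ``sharp form,'' and the paper instead derives strict decrease of $A_{i,\phi}(M_t)$ by invoking the strict static convexity for $t>0$ supplied by Theorem \ref{preservingofconvex}, after first setting aside the degenerate possibility $M=S(r_0)$ with $((\phi')^2-\phi\phi'')(r_0)=0$ (where $M$ is already a slice and there is nothing to prove).
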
 

\begin{rem}
In the proof of deducing the monotonicities of $A_{i,\phi}(M_t)$ ($i=0,1$), we will use the Minkowski identities in Lemma \ref{lem-Min}, hence we need extra lower bounds on dimension $n$, and in this procedure, the upper bound condition $(\phi')^2-\phi\phi''\leq 1$ and the derivative condition $\phi''>0$ are also needed.
\end{rem}
Besides the standard spaces of constant sectional curvature, the most basic example of static, rotationally symmetric manifold is the Schwarzchild manifold. Let us fix a constant $m>0$. The Schwarzschild manifold is the $(n+1)$-dimensional ($n\geq 2$) manifold $\mathbf{N}^{n+1}=[0,+\infty)\times\mathbb{S}^n$ equipped with the metric
\begin{align}\label{schwazmetric}
    \bar{g}=dr^2+\lambda^2(r)\sigma,
\end{align}
where $\lambda(r):[0,+\infty)\to[s_0,+\infty)$ is the smooth function satisfies the ODE
\begin{align*}
    \lambda'(r)=\sqrt{1-2m\lambda^{1-n}},
\end{align*}
and $s_0$ is the unique positive solution of $1-2ms_0^{1-n}=0$. Then, 
\begin{align*}
    \lambda''(r)=(n-1)m\lambda^{-n},
\end{align*}
and
\begin{align*}
    (\lambda')^2(r)-\lambda(r)\lambda''(r)=1-m(n+1)\lambda^{1-n}.
\end{align*}
Consequently, if we denote $r_0$ the positive real number such that $1-m(n+1)\lambda^{1-n}(r_0)=0$ and restrict $M_0$ in the region $[r_0,\infty)\times\mathbb{S}^n$, all the conditions in Definition \ref{defn-N} can be satisfied in this case. Hence, the weighted geometric inequalities can be established in the Schwarzschild manifold as a direct consequence of Corollary \ref{Cor-wii}.
\begin{cor}\label{Cor-Sci}
    Assume that $\mathbf{N}^{n+1}(n\geq 2)$ is the Schwarzschild manifold, $B(R)\subset \mathbf{N}^{n+1}(R>r_0)$ is the domain bounded by $S(r_0)$ and $S(R)$. Then there exists a constant $\varepsilon>0$, depending only on $n$ and $R$, such that for any static convex domain $\Omega\subset B(R)$ which is bounded by $S(r_0)$ and a smooth graphical hypersurface $M$ with $M$ being $\varepsilon$-close to a slice of $\mathbf{N}^{n+1}$ in the $C^1$ sense, the following weighted geometric inequalities holds:
	\begin{equation}
		A_{0,\phi}(M)\geq \chi_{0,\alpha}(V_{\phi}^{\alpha}(\Omega)).\label{In-scwi}
	\end{equation} 
 If we assume further that $n\geq 3$, then 
 \begin{equation}
     A_{1,\phi}(M)\geq \chi_{1,\alpha}(V_{\phi}^{\alpha}(\Omega)),\label{In-scwii}
 \end{equation}
where $\chi_{i,\alpha} (i=0,1$\,\,and\,\, $\alpha\leq 1)$ are the functions defined in Equation \eqref{eq-chi}. The equalities hold in \eqref{In-scwi} or \eqref{In-scwii} if and only if $M$ a slice of $\mathbf{N}^{n+1}$.
\end{cor}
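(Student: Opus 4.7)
The plan is to reduce Corollary \ref{Cor-Sci} directly to Corollary \ref{Cor-wii} by verifying that the Schwarzschild manifold, restricted to the region $[r_0,\infty)\times \mathbb{S}^n$, fits into the framework of Definition \ref{defn-N}. In other words, the entire task is to check the warping-function inequalities of Definition \ref{defn-N} for $\phi = \lambda$ on $[r_0,\infty)$ and then invoke the general result.

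The first step is to verify the positivity conditions $\phi,\phi',\phi''>0$. From the defining ODE $\lambda'(r)=\sqrt{1-2m\lambda^{1-n}}$ and the computed derivative $\lambda''(r)=(n-1)m\lambda^{-n}$, positivity of $\lambda$ and $\lambda''$ is immediate (using $n\geq 2$ and $m>0$), while positivity of $\lambda'$ requires $\lambda^{n-1}>2m$. The second step is to compute $(\lambda')^2-\lambda\lambda''=1-m(n+1)\lambda^{1-n}$, which is exactly the quantity appearing in \eqref{in-phiphi}. Its upper bound $\leq 1$ is automatic since $m(n+1)\lambda^{1-n}>0$, and its lower bound $\geq 0$ is equivalent to $\lambda^{n-1}\geq m(n+1)$; this is precisely the defining inequality for $r\geq r_0$ by the definition of $r_0$.

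The third step is to check that these two positivity requirements are consistent: since $n+1\geq 3>2$ for $n\geq 2$, the inequality $\lambda^{n-1}\geq m(n+1)$ on $[r_0,\infty)$ strictly implies $\lambda^{n-1}>2m$, so $\lambda'>0$ throughout. Thus on $[r_0,\infty)\times\mathbb{S}^n$ the Schwarzschild metric \eqref{schwazmetric} satisfies every hypothesis of Definition \ref{defn-N}.

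The final step is to apply Corollary \ref{Cor-wii} with $\phi=\lambda$ to obtain \eqref{In-scwi} under the assumption $n\geq 2$ and \eqref{In-scwii} under the stronger assumption $n\geq 3$, together with the rigidity statement. There is no real obstacle here: the entire content is the routine ODE computation identifying $(\lambda')^2-\lambda\lambda''$ and recognizing that the restriction $r\geq r_0$ is the precise condition that puts the Schwarzschild manifold inside the class covered by Definition \ref{defn-N}.
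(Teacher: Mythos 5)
Your proposal is correct and matches the paper's own argument: the paper likewise observes that $(\lambda')^2-\lambda\lambda''=1-m(n+1)\lambda^{1-n}$, defines $r_0$ precisely so that this quantity is nonnegative on $[r_0,\infty)$, notes that all conditions in Definition \ref{defn-N} are then satisfied, and invokes Corollary \ref{Cor-wii}. Your added verification that $\lambda'>0$ follows automatically from $\lambda^{n-1}\geq m(n+1)>2m$ is a correct, if routine, consistency check that the paper leaves implicit.
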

\begin{rem}
    The result also holds in the anti-de Sitter- Schwarzchild manifold by the totally similar calculation.
\end{rem}
The paper is organized as follows: 
In \S\ref{sec-pre}, we collect some preliminaries including the geometry of the static rotationally symmetric space $\mathbf{N}^{n+1}$, the graphical representation for hypersurfaces in $\mathbf{N}^{n+1}$, and the evolution equations along the flow \eqref{flow-WVPF}. In \S\ref{sec-long}, we rewrite the flow equation \eqref{flow-WVPF} as a scalar parabolic initial value problem \eqref{flow-QF} and give a priori $C^0$ and $C^1$ estimates, then we complete the proof of Theorem \ref{Thm-main} and Corollary \ref{Cor-VVphi}. In \S\ref{Sec-static-preserving}, we use the tensor maximum principle to show that the static convexity is preserved along the flow \eqref{flow-WVPF}, provided that the initial hypersurface $M_0$ is $\varepsilon_0$-close to a slice of $\mathbf{N}^{n+1}$ in the $C^1$ sense for some constant $\varepsilon_0>0$ and then complete the proof of Theorem \ref{preservingofconvex}. Finally, in \S\ref{sec-inequality}, we show that $A_{i,\phi}(M_t)(i=0,1)$ are monotone decreasing along the flow \eqref{flow-WVPF} under the assumptions of Theorem \ref{preservingofconvex} and complete the proof of Corollary \ref{Cor-wii}.

\begin{ack}
	The research was surpported by National Key R and D Program of China 2021YFA1001800 and 2020YFA0713100, NSFC 11721101, China Postdoctoral Science Foundation No.2022M723057, the Fundamental Research Funds for the Central Universities and Shuimu Tsinghua Scholar Program (No. 2023SM102).
\end{ack}  
\section{Preliminaries}\label{sec-pre}
\subsection{The geometric quantities of the rotationally symmetric space}\label{sub-gq}
In this subsection, we give the formulas of curvatures in a rotationally symmetric space $\mathbf{N}^{n+1}$ equipped with the metric \eqref{eq-metric} and show some basic properties of the conformal vector field $V=\phi(r)\partial_r$ defined in $\mathbf{N}^{n+1}$. These results are well-known and most of them can be found in \cites{Besse, Brendle13, Ding-2011, GLW-2019}.

Let $\mathbf{N}^{n+1}$ be a rotationally symmetric space equipped with the metric \eqref{eq-metric} and $\theta=\{\theta^1,\cdots,\theta^n\}$ be the local coordinate of $\mathbb{S}^n$. Then $\{\partial_i:={\partial_{\theta^i}}\}_{i=1}^{n}$ form a basis of the tangent space of $\mathbb{S}^n$ and $\{\partial_r:=\frac{\partial}{\partial r}\} \cup \{\partial_i:={\partial_{\theta^i}}\}_{i=1}^{n}$ form a basis of the tangent space of $N$.

Assume that M is a hypersurface of $N$. Let $D,\nabla,\overline{\nabla}$ and $\Delta_0,\Delta_g,\overline{\Delta}$ be the Levi-Civita connection and Laplacian operator of $\mathbb{S}^n, M$ and $N$, respectively. We denote $R$ and $\bar{R}$, $Ric$ and  $\overline{Ric}$, $\rho$ and $\bar{\rho}$ as the Riemannian curvature tensor, Ricci tensor and the scalar curvature of $M$ and $\mathbf{N}^{n+1}$ respectively. Our convention for the Riemannian curvature tensor is:
\begin{equation*}
	\bar{R}(X,Y)Z=\overline{\nabla}_X\overline{\nabla}_Y Z-\overline{\nabla}_Y\overline{\nabla}_X Z-\overline{\nabla}_{[X,Y]} Z
\end{equation*}
and the purely covariant version is
\begin{equation*}
	\bar{R}(X,Y,Z,W)=\bar{g}(\bar{R}(Z,W)Y,X).
\end{equation*}
\begin{prop}\label{prop-RT}
	The Riemannian curvature tensors of $(\mathbf{N}^{n+1},\bar{g})$ have the following components:
	\begin{align}
         \bar{R}_{ijkl}:=\bar{R}(\partial_i,\partial_j,\partial_k,\partial_{\ell})&=-\frac{(\phi')^2-1}{\phi^2}(\bar{g}_{ik}\bar{g}_{jl}-\bar{g}_{il}\bar{g}_{jk}),\label{eq-Rijkl}\\
         \bar{R}_{irjr}:=\bar{R}(\partial_i,\partial_r,\partial_j,\partial_r)&=-\frac{\phi''}{\phi}\bar{g}_{ij}\label{eq-Rirjr},
	\end{align}
where $i,j,k,\ell=1,2,\cdots,n$ and other components of the Riemannian curvature tensors are equal to 0. 

Furthermore, if $X,Y,Z,W\in T\mathbf{N}^{n+1}$, then we have
\begin{align}
	\bar{R}(X,Y,Z,W)=&\frac{(\phi')^2-\phi\phi''-1}{\phi^2}(a_X a_Z \bar{g}(Y,W)+a_Y a_W \bar{g}(X,Z)-a_X a_W \bar{g}(Y,Z)-a_Y a_Z\bar{g}(X,W))\notag\\
	&-\frac{(\phi')^2-1}{\phi^2}(\bar{g}(X,Z)\bar{g}(Y,W)-\bar{g}(X,W)\bar{g}(Y,Z)),\label{eq-gnR}
\end{align} 
where $a_X=\bar{g}(X,\partial_r)$ and $a_Y, a_Z, a_W$ are defined as the same form of $a_X$.
\end{prop}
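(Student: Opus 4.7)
The plan is to treat $\bar g=dr^2+\phi^2(r)\sigma$ as a warped product with $1$-dimensional base $([r_0,\bar r),dr^2)$ and fiber $(\mathbb{S}^n,\sigma)$, warping function $\phi$, and then invoke the standard warped-product curvature computation. First I would compute the nonzero Christoffel symbols: using $\bar g_{rr}=1$, $\bar g_{ij}=\phi^2\sigma_{ij}$, a direct calculation gives $\bar\Gamma_{ij}^{r}=-\phi\phi'\,\sigma_{ij}$, $\bar\Gamma_{ri}^{j}=\frac{\phi'}{\phi}\delta_i^{j}$, and $\bar\Gamma_{ij}^{k}=\hat\Gamma_{ij}^{k}$ (the Christoffel symbols of $\sigma$), with all other components vanishing. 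Substituting these into the coordinate expression of the Riemann tensor and simplifying yields the two stated components \eqref{eq-Rijkl}, \eqref{eq-Rirjr} and the vanishing of all other components.

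To make the computation conceptually clean rather than a brute-force index calculation, I would prefer the following route. For purely tangential indices, the fiber $\mathbb{S}^n$ with the induced metric $\phi^2\sigma$ is a round sphere of radius $\phi$, hence has constant sectional curvature $\tfrac{1}{\phi^2}$; the O'Neill/warped-product formula then corrects the sectional curvature of 2-planes tangent to the fiber by $-|\nabla\phi|^{2}/\phi^{2}=-(\phi')^2/\phi^2$, giving constant sectional curvature $\tfrac{1-(\phi')^2}{\phi^2}$ on such 2-planes. This forces
\begin{equation*}
\bar R(\partial_i,\partial_j,\partial_k,\partial_\ell)=-\tfrac{(\phi')^2-1}{\phi^2}\bigl(\bar g_{ik}\bar g_{j\ell}-\bar g_{i\ell}\bar g_{jk}\bigr),
\end{equation*}
which is \eqref{eq-Rijkl}. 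For the mixed sectional curvature of a $2$-plane spanned by $\partial_r$ and a unit vertical vector, the warped-product formula gives $K=-\tfrac{\nabla^2\phi(\partial_r,\partial_r)}{\phi}=-\tfrac{\phi''}{\phi}$; polarizing this in the vertical slot produces $\bar R(\partial_i,\partial_r,\partial_j,\partial_r)=-\tfrac{\phi''}{\phi}\bar g_{ij}$, which is \eqref{eq-Rirjr}. The vanishing of mixed components of the form $\bar R(\partial_i,\partial_j,\partial_k,\partial_r)$ is a standard consequence of the orthogonal splitting $T\mathbf{N}^{n+1}=\mathbb{R}\partial_r\oplus T\mathbb{S}^n$ together with the fact that $\phi$ depends only on $r$.

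For the coordinate-free identity \eqref{eq-gnR}, the plan is to decompose each argument as $X=a_X\partial_r+X^{T}$ with $X^T\in T\mathbb{S}^n$, and likewise for $Y,Z,W$, then expand $\bar R(X,Y,Z,W)$ multilinearly. The antisymmetries in the first and second pairs immediately kill the configurations having two radial entries within the same pair, and the vanishing of the one-radial-three-tangential components kills all terms that are odd in the radial content. The only surviving contributions are the all-tangential term, which is handled by \eqref{eq-Rijkl} applied to $X^T,Y^T,Z^T,W^T$, and the four ``radial in each pair'' terms, each handled by \eqref{eq-Rirjr}. Replacing $\bar g(X^T,Y^T)=\bar g(X,Y)-a_Xa_Y$ in each factor, one sees that the quartic $a_Xa_Ya_Za_W$ contributions cancel identically, and collecting the remaining terms groups them into the two tensorial blocks on the right of \eqref{eq-gnR} with the stated coefficients $-\tfrac{(\phi')^2-1}{\phi^2}$ and $\tfrac{(\phi')^2-\phi\phi''-1}{\phi^2}$.

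The whole proposition is essentially routine, so there is no real obstacle. The only place that requires some care is the last step: when combining the tangential and ``irir'' contributions one must track signs consistently under the two pair-antisymmetries and the pair-exchange symmetry $\bar R(A,B,C,D)=\bar R(C,D,A,B)$, and verify the cancellation of the quartic $a_Xa_Ya_Za_W$ piece, which is exactly what produces the clean coefficient $\tfrac{(\phi')^2-\phi\phi''-1}{\phi^2}$ in \eqref{eq-gnR}.
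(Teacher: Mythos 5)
Your derivation of \eqref{eq-gnR}---decomposing $X=a_X\partial_r+X^T$, expanding multilinearly, observing that only the purely tangential term and the four radial-in-each-pair terms survive, applying \eqref{eq-Rijkl} and \eqref{eq-Rirjr}, and then substituting $\bar g(X^T,Z^T)=\bar g(X,Z)-a_Xa_Z$ (with the quartic pieces cancelling)---is exactly the computation the paper carries out. The only difference is that the paper simply cites Ding (2011) for the coordinate formulas \eqref{eq-Rijkl} and \eqref{eq-Rirjr}, whereas you re-derive them from the standard warped-product sectional curvature formulas; this is a harmless, slightly more self-contained variant.
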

\begin{proof}
Equations \eqref{eq-Rijkl} and \eqref{eq-Rirjr} can be found in \cite{Ding-2011}. Hence we just calculate $\bar{R}(X,Y,Z,W)$ for arbitrary $X,Y,Z,W\in T\mathbf{N}^{n+1}$. 

First, we decompose $X$ as follows: 
\begin{equation*}
X=a_X \partial_r+X^{\perp},
\end{equation*}
where $a_X=\bar{g}(X,\partial_r)$ and $X^{\perp}$ denotes the component of $X$ which is orthogonal to the radial direction. The decomposition of $Y,Z$ and $W$ can be done similarly. Combining the decompositions with equations \eqref{eq-Rijkl} and \eqref{eq-Rirjr}, we calculate as follows:
\begin{align*}
	\bar{R}(X,Y,Z,W)
=&\bar{R}(a_X \partial_r+X^{\perp},a_Y \partial_r+Y^{\perp},a_Z \partial_r+Z^{\perp},a_W \partial_r+W^{\perp})\notag\\
=&a_X a_Z\bar{R} (\partial_r,Y^{\perp},\partial_r,W^{\perp})+a_X a_W\bar{R}(\partial_r,Y^{\perp},Z^{\perp},\partial_r)\notag\\
&+a_Y a_Z\bar{R} (X^{\perp},\partial_r,\partial_r,W^{\perp})+a_Y a_W\bar{R} (X^{\perp},\partial_r,Z^{\perp},\partial_r)\notag\\
&+\bar{R}(X^{\perp},Y^{\perp},Z^{\perp},W^{\perp})\notag\\
=&-\frac{\phi''}{\phi}a_X a_Z\bar{g}(Y^{\perp},W^{\perp})+\frac{\phi''}{\phi}a_X a_W\bar{g}(Y^{\perp},Z^{\perp})\notag\\
&+\frac{\phi''}{\phi}a_Y a_Z\bar{g}(X^{\perp},W^{\perp})-\frac{\phi''}{\phi}a_Y a_W\bar{g}(X^{\perp},Z^{\perp})\notag\\
&-\frac{(\phi')^2-1}{\phi^2}(\bar{g}(X^{\perp},Z^{\perp})\bar{g}(Y^{\perp},W^{\perp})-\bar{g}(Y^{\perp},Z^{\perp})\bar{g}(X^{\perp},W^{\perp})).
\end{align*}
Since
\begin{align*}
	\bar{g}(X^{\perp},Z^{\perp})&=\bar{g}(X-a_X\partial_r,Z-a_Z\partial_r)=\bar{g}(X,Z)-a_X a_Z
\end{align*}
and other terms can be calculated similarly, then equation \eqref{eq-gnR} follows easily.
\end{proof}

The following formula of the Ricci tensor $\overline{Ric}$ can be found in \cite{GLW-2019} (see also \cites{Besse,Brendle13}) and hence the formula of the scalar curvature $\bar{\rho}$ follows by a direct calculation.

\begin{lem}
   Let $\mathbf{N}^{n+1}$ be a rotationally symmetric space equipped with the metric \eqref{eq-metric}. Then the Ricci tensor $\overline{Ric}$ and the scalar curvature $\bar{\rho}$ of $(\mathbf{N}^{n+1},\bar{g})$ are given by
\begin{align}
	\overline{Ric}&=-n\frac{\phi''}{\phi} dr^2-[(n-1)((\phi')^2-1)+\phi\phi'']\sigma,\label{eq-RicciT}\\
	\bar{\rho}&=-n\left(2\frac{\phi''}{\phi}+(n-1)\frac{(\phi')^2-1}{\phi^2}\right).\label{eq-ScalarC}
\end{align}
\end{lem}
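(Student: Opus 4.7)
The strategy is straightforward: both formulas are obtained by tracing the Riemann curvature tensor computed in Proposition \ref{prop-RT}, so no new geometric input is required. The plan is to split the computation into a radial and a spherical piece and handle each using the two explicit components \eqref{eq-Rijkl}, \eqref{eq-Rirjr}.

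First I would pick, at each point, an adapted orthonormal frame $\{e_0, e_1, \ldots, e_n\}$ with $e_0 = \partial_r$ and $e_1, \ldots, e_n$ tangent to the slice $\{r\} \times \mathbb{S}^n$. With this frame the Ricci tensor is $\overline{Ric}(X,Y) = \sum_{a=0}^{n} \bar{R}(e_a, X, e_a, Y)$. For the radial-radial component, only the ``mixed'' sectional curvatures contribute, and \eqref{eq-Rirjr} gives $\bar{R}(e_i, \partial_r, e_i, \partial_r) = -\phi''/\phi$ for each $i = 1, \ldots, n$, hence $\overline{Ric}(\partial_r, \partial_r) = -n\,\phi''/\phi$, which matches the $dr^2$ coefficient in \eqref{eq-RicciT}. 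For a tangential unit vector $e_j$, the sum splits as one term coming from the pair with $\partial_r$ (giving $-\phi''/\phi$ via \eqref{eq-Rirjr}) plus $n-1$ terms coming from the pairs with the other tangential $e_i$'s (each giving $-((\phi')^2-1)/\phi^2$ via \eqref{eq-Rijkl}); summing and rescaling from unit vectors back to the coordinate basis $\partial_i$, which has $\bar{g}(\partial_i,\partial_i) = \phi^2 \sigma_{ii}$, yields exactly the $\sigma$ coefficient in \eqref{eq-RicciT}.

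The scalar curvature \eqref{eq-ScalarC} then falls out by tracing \eqref{eq-RicciT}: sum $\overline{Ric}(\partial_r,\partial_r) = -n\phi''/\phi$ with $n$ copies of the tangential unit-vector value $-\phi''/\phi - (n-1)((\phi')^2-1)/\phi^2$, which combines to $-n\bigl(2\phi''/\phi + (n-1)((\phi')^2-1)/\phi^2\bigr)$.

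There is essentially no obstacle here; the only place one has to be a little careful is keeping track of the conversion between the coordinate basis $\{\partial_i\}$ (in which $\sigma$ is expressed and in which \eqref{eq-Rijkl}--\eqref{eq-Rirjr} are stated using $\bar{g}_{ij} = \phi^2 \sigma_{ij}$) and the orthonormal basis used to trace. Since this computation is standard and appears in the references \cites{Besse,Brendle13,GLW-2019} cited just before the lemma, I would present the argument in two short displays—one for the trace giving $\overline{Ric}$ and one for the further trace giving $\bar{\rho}$—rather than writing it out in detail, or simply cite the references.
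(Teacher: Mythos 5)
Your proposal is correct and takes essentially the paper's approach: the paper simply cites \cite{GLW-2019} (and \cites{Besse,Brendle13}) for the Ricci formula and remarks that the scalar curvature follows by a direct calculation, and your trace-of-the-Riemann-tensor computation (using \eqref{eq-Rijkl}--\eqref{eq-Rirjr} in an adapted orthonormal frame, with the $\bar g_{ij}=\phi^2\sigma_{ij}$ rescaling) is precisely that direct calculation. The arithmetic checks out in both the $\partial_r$ and tangential directions, so nothing is missing.
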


Let $V:=\phi(r)\partial_r$ be the vector field defined on $\mathbf{N}^{n+1}$, then it is well-known that $V$ is a conformal Killing vector field and have the following basic properties.

\begin{lem}[\cites{Brendle13,GLW-2019}]\label{lem-Phi}
    The vector field $V=\phi(r)\partial_r$ is the conformal vector field on $\mathbf{N}^{n+1}$, i.e., the Lie derivative of $V$ is given by $\mathcal{L}_V\bar{g}=2\phi'(r)\bar{g}$. Moreover,
    \begin{equation}\label{eq-conformal}
    \langle\bar{\nabla}_X{V},Y\rangle=\phi'\bar{g}(X,Y)
    \end{equation}
 for any $X,Y\in T\mathbf{N}^{n+1}$. Furthermore, if we let $\Phi'(r)=\phi(r)$, then on a hypersurface $M\subset\mathbf{N}^{n+1}$,
    \begin{align}
\Phi_{ij}&=\nabla_j\langle{\phi\partial_r,e_i}\rangle=\phi'g_{ij}-uh_{ij},\label{eq-Phiij}\\
        \Delta_g\Phi&=n\phi'-uH.\label{eq-DeltaPhi}
    \end{align}
    where $\Phi_{ij}$ is the Hessian of the function $\Phi$ and $\Delta_g\Phi$ is the Laplacian of the function $\Phi$.
\end{lem}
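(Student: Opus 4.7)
The whole lemma reduces to establishing one clean identity: $\bar{\nabla}_X V = \phi'\, X$ for every $X \in T\mathbf{N}^{n+1}$, where $V = \phi(r)\partial_r$. Once this is in hand, the conformal identity $\langle\bar{\nabla}_X V,Y\rangle = \phi'\bar{g}(X,Y)$ is immediate, and symmetrizing yields $\mathcal{L}_V\bar{g} = 2\phi'\bar{g}$. The Hessian and Laplacian formulas in \eqref{eq-Phiij}--\eqref{eq-DeltaPhi} will then follow by a one-line Gauss-formula computation on the hypersurface $M$.

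To establish $\bar{\nabla}_X V = \phi'X$, I would decompose $X = a_X\partial_r + X^\perp$ with $a_X = \bar{g}(X,\partial_r)$ and $X^\perp$ tangent to the level sets $\{r=\mathrm{const}\}$. The standard warped-product connection formulas for the metric $\bar{g} = dr^2 + \phi^2(r)\sigma$ (compare \cite{Besse}) give $\bar{\nabla}_{\partial_r}\partial_r = 0$ and $\bar{\nabla}_{X^\perp}\partial_r = (\phi'/\phi)\, X^\perp$. Combined with $X(\phi) = a_X\phi'$, this yields
\[
\bar{\nabla}_X V = X(\phi)\,\partial_r + \phi\,\bar{\nabla}_X\partial_r = a_X\phi'\,\partial_r + \phi'X^\perp = \phi'\, X,
\]
which is the key identity.

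For the second half of the lemma, note that since $\Phi'(r) = \phi(r)$, the ambient gradient satisfies $\bar{\nabla}\Phi = \phi\,\partial_r = V$. Therefore $\nabla_j\Phi = \langle V, e_j\rangle = \langle\phi\partial_r, e_j\rangle$, giving the first equality in \eqref{eq-Phiij}. The ambient Hessian of $\Phi$ is $\bar{\nabla}^2\Phi(e_i,e_j) = \langle\bar{\nabla}_{e_i} V, e_j\rangle = \phi'\,g_{ij}$ by the key identity. Passing to the intrinsic Hessian via the Gauss formula $\nabla_i\nabla_j\Phi = \bar{\nabla}^2\Phi(e_i,e_j) - h_{ij}\langle\bar{\nabla}\Phi,\nu\rangle$ and using $\langle V,\nu\rangle = u$ produces $\Phi_{ij} = \phi'g_{ij} - uh_{ij}$; tracing with $g^{ij}$ gives $\Delta_g\Phi = n\phi' - uH$. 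There is no genuine obstacle here — the result is essentially a repackaging of the standard fact that the radial vector field $\phi(r)\partial_r$ is conformal Killing in any warped product — so the only points requiring care are correctly citing or deriving the warped-product Christoffel symbols and keeping the Gauss correction term with the right sign.
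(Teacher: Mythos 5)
Your proof is correct and follows the standard argument; the paper itself gives no proof for this lemma, instead citing Brendle and Guan--Li--Wang, where essentially the same warped-product computation appears. One small stylistic note: the sign in the Gauss-formula correction $\nabla_i\nabla_j\Phi=\bar{\nabla}^2\Phi(e_i,e_j)-h_{ij}\langle\bar{\nabla}\Phi,\nu\rangle$ depends on the convention for the second fundamental form; with the paper's convention (a slice has $h_{ij}=\frac{\phi'}{\phi}g_{ij}>0$ for the outward normal $\nu=\partial_r$), your sign is the right one, which you can sanity-check by observing that $\Phi_{ij}$ must vanish on a slice.
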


\subsection{Graphical representation for hypersurfaces in $\mathbf{N}^{n+1}$}\label{subsection-graph}
Let $\mathbf{N}^{n+1}$ be a rotationally symmetric space equipped with the metric \eqref{eq-metric} and $M$ be a hypersurface of $\mathbf{N}^{n+1}$. If $M$ is a graphical hypersurface of $\mathbf{N}^{n+1}$, i.e., $M$ is a graph of a smooth and positive function $r(\theta)$ on $\mathbb{S}^n$:
\begin{equation*}
	M=\{(r(\theta),\theta)\in[r_0,\bar{r})\times\mathbb{S}^n\}
\end{equation*}
As mentioned in \S \ref{sub-gq}, we denote $\sigma$ as the round metric on $\mathbb{S}^n$, $D$ as the Levi-Civita connection on $\mathbb{S}^n$ with respect to $\sigma$ and $g$ as the induced metric on $M$. Meanwhile, for a smooth function $f$ defined on $M$, we set $f_i=D_i f, f^j=\sigma^{ij}f_i$ and $f_{ij}=D_iD_j f$, where $(\sigma^{ij})$ is the inverse matrix of $(\sigma_{ij})$. 
Then the tangent space of $M$ is spanned by $\{e_i=\partial_i+r_i\partial_r,i=1,2,\cdots,n\}$. As Guan and Li did in \cite{GL-2015}, 
we introduce a new variable $\gamma$, which satisfies
\begin{equation}\label{de-gamma}
	\frac{d\gamma}{d r}=\frac{1}{\phi(r)}
\end{equation}
and let $\omega=\sqrt{1+|D\gamma|^2}$. Hence, we have the following radial graph representation for  some geometric quantities of the hypersurfaces $M$ (see \cite{GL-2015} for details):
\begin{align}
	g_{ij}=&\phi^2(\sigma_{ij}+\gamma_i\gamma_j),\ 
	g^{ij}=\frac{1}{\phi^2}(\sigma^{ij}-\frac{\gamma^i\gamma^j}{\omega^2}),\label{eq-g1}\\
	\nu=&\frac{1}{\omega}(\partial_r-\frac{\gamma^j\partial_j}{\phi}),\
	u=\frac{\phi}{\omega},\label{eq-nu1}\\
	h_{ij}=&\frac{\phi}{\omega}(-\gamma_{ij}+\phi'\gamma_i\gamma_j+\phi'\sigma_{ij}),\label{eq-hij1}\\
	h_j^i=&\frac{1}{\phi\omega}(\sigma^{ik}-\frac{\gamma^i\gamma^k}{\omega^2})(-\gamma_{kj}+\phi'\gamma_k\gamma_j+\phi'\sigma_{kj}).\label{eq-WG1}
\end{align}
\begin{rem}\label{rem-slice}
    A slice of $\mathbf{N}^{n+1}$ has its second fundamental form $h_{ij}=\frac{\phi'}{\phi}g_{ij}$ and hence the principal curvatures are $\kappa_i=\frac{\phi'}{\phi}$ according to equations \eqref{eq-g1},\eqref{eq-hij1} and \eqref{eq-WG1}.
\end{rem}
We now consider the flow equation \eqref{flow-WVPF} of radial graphs over $\mathbb{S}^n$ in $\mathbf{N}^{n+1}$. It is known (cf. \cite{Ger06}) that if a closed hypersurface which is graphical and satisfies 
\begin{equation*}
	\partial_t X=\mathcal{F}\nu,
\end{equation*}
then the evolution of the scalar function $r=r(X(z,t),t)$ satisfies
\begin{equation*}
	\partial_t r=\mathcal{F}\omega
\end{equation*}
and hence $\gamma$ satisfies 
\begin{equation*}
	\partial_t\gamma=\mathcal{F}\frac{\omega}{\phi}
\end{equation*}
Therefore, the flow \eqref{flow-WVPF} is equivalent to the following parabolic initial value problem:
\begin{equation}\label{flow-SWVPF}
	\left\{\begin{aligned}
		\partial_t\gamma=&~(n-\frac{uH}{\phi'})\frac{\omega}{\phi}\quad \text{for}\quad  (z,t)\in\mathbb{S}^n\times[0,\infty),\\
		\gamma(\cdot,0)=&~\gamma_0(\cdot).
	\end{aligned}\right.
\end{equation}

\subsection{Static Riemannian manifolds}\label{sub-sta}
In this subsection, we prove some basic properties of static rotationally symmetric spaces, which will be of great use in proving the preserving of static convexity along the flow \eqref{flow-WVPF}.  

It is well-known that a static Riemannian triple must be of constant scalar curvature, see e.g \cite{CJ-2000}. In fact, if $\mathbf{N}^{n+1}$ is a rotationally symmetric space equipped with the metric \eqref{eq-metric}, then it is equivalent that $\mathbf{N}^{n+1}$ is static and it has constant scalar curvature. We present a simple proof here.
\begin{lem}\label{Lem-stacon}
	Let $\mathbf{N}^{n+1}$ be a rotationally symmetric space equipped with the metric \eqref{eq-metric}, then the Riemannian triple $(\mathbf{N}^{n+1},\bar{g},\phi')$ being sub-static is equivalent to that the scalar curvature $\bar{\rho}$
	is non-increasing in $r$ or precisely,
	\begin{equation}\label{eq-subst}
		\phi^2\phi'''+(n-2)\phi\phi'\phi''-(n-1)\phi'((\phi')^2-1)\geq 0.
	\end{equation}
	In particular, the Riemannian triple $(\mathbf{N}^{n+1},\bar{g},\phi')$ being static is equivalent to that the scalar curvature $\bar{\rho}$ is a constant not depending on $r$ or precisely,
	\begin{equation}\label{eq-sTa}
		\phi^2\phi'''+(n-2)\phi\phi'\phi''-(n-1)\phi'((\phi')^2-1)=0.
	\end{equation}
Moreover, there exists a constant $C_0$ such that 
\begin{align}
    (\phi')^2-\phi\phi''-1&\leq-\frac{C_0}{\phi^{n-1}},\quad \text{if\, $\mathbf{N}^{n+1}$ is sub-static,}\label{eq-subst-2}\\
    (\phi')^2-\phi\phi''-1&=-\frac{C_0}{\phi^{n-1}},\quad \text{if\, $\mathbf{N}^{n+1}$ is static.}\label{eq-stat-2}
\end{align}
In fact, the constant $C_0$ can be chosen as $-\phi^{n-1}(r_0)((\phi')^2-\phi\phi''-1)(r_0)$ in the sub-static case and $C_0$ is an universal constant not depending on the choice of $r_0$ in the static case. Furthermore, in the static case, $C_0=0$ if and only if $\mathbf{N}^{n+1}$ is of constant sectional curvature.
\end{lem}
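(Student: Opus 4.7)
The plan is to compute the sub-static tensor $V\cdot Q = \bar\Delta V\,\bar g - \bar\nabla^2 V + V\,\overline{\mathrm{Ric}}$ with $V = \phi'$ and recognize the resulting algebraic quantity as simultaneously (i) the spherical component of $V\cdot Q$, (ii) up to a negative factor, the radial derivative of $\bar\rho$, and (iii) up to the factor $-\phi^{n-2}$, the radial derivative of the integrating-factor quantity $\phi^{n-1}((\phi')^2 - \phi\phi'' - 1)$. For (i), a standard warped-product computation gives, for any radial function $f=f(r)$, the Hessian components $(\bar\nabla^2 f)_{rr}=f''$, $(\bar\nabla^2 f)_{rj}=0$, $(\bar\nabla^2 f)_{ij}=\phi\phi' f'\sigma_{ij}$, whence $\bar\Delta f = f''+n\phi' f'/\phi$. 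Taking $f=\phi'$ and combining with the Ricci components from \eqref{eq-RicciT}, I expect $(V\cdot Q)_{rr}$ to vanish identically and the spherical component to simplify to
\begin{equation*}
(V\cdot Q)_{ij} = \bigl[\phi^2\phi''' + (n-2)\phi\phi'\phi'' - (n-1)\phi'((\phi')^2 - 1)\bigr]\sigma_{ij},
\end{equation*}
which yields the characterizations \eqref{eq-subst} and \eqref{eq-sTa}.

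For (ii), differentiating \eqref{eq-ScalarC} in $r$ and collecting terms I expect to obtain
\begin{equation*}
\bar\rho'(r) = -\frac{2n}{\phi^3}\bigl[\phi^2\phi''' + (n-2)\phi\phi'\phi'' - (n-1)\phi'((\phi')^2 - 1)\bigr],
\end{equation*}
so that the sub-static (resp.\ static) condition is equivalent to $\bar\rho' \le 0$ (resp.\ $\bar\rho' \equiv 0$) on $[r_0,\bar r)$.

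For the pointwise estimates, I would set $F(r) := \phi^{n-1}((\phi')^2 - \phi\phi'' - 1)$ and verify by direct differentiation that
\begin{equation*}
F'(r) = -\phi^{n-2}\bigl[\phi^2\phi''' + (n-2)\phi\phi'\phi'' - (n-1)\phi'((\phi')^2 - 1)\bigr],
\end{equation*}
so $F$ is non-increasing on $[r_0,\bar r)$ in the sub-static case and constant in the static case. Setting $C_0 := -F(r_0)$ and rearranging yields \eqref{eq-subst-2} and \eqref{eq-stat-2}; in the static case the constancy of $F$ on the whole domain shows that $C_0$ is independent of the choice of $r_0$.

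Finally, for the last assertion, suppose $C_0 = 0$ in the static setting, so $(\phi')^2 - \phi\phi'' \equiv 1$. Differentiating gives $\phi\phi''' = \phi'\phi''$, hence $(\phi''/\phi)' \equiv 0$, and moreover $((\phi')^2 - 1)/\phi^2 = \phi\phi''/\phi^2 = \phi''/\phi =: -K$ is a common constant. Substituting into \eqref{eq-gnR} kills the anisotropic first term and leaves $\bar R(X,Y,Z,W) = -K(\bar g(X,Z)\bar g(Y,W) - \bar g(X,W)\bar g(Y,Z))$, so $\mathbf{N}^{n+1}$ has constant sectional curvature $-K$; the converse follows immediately from the same formula \eqref{eq-gnR}. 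The main technical hurdle is spotting the correct integrating factor $\phi^{n-1}$ in the third step, but once that is observed each of the three identities above is a routine one-variable verification.
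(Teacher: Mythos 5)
Your proposal is correct and follows essentially the same route as the paper: compute the sub-static tensor $V\cdot Q$ to identify the bracket $\phi^2\phi'''+(n-2)\phi\phi'\phi''-(n-1)\phi'((\phi')^2-1)$, recognize it (up to a negative factor) as $d\bar\rho/dr$, use the integrating factor $\phi^{n-1}$ to get $F'(r)=-\phi^{n-2}[\cdots]$, and handle the $C_0=0$ case via \eqref{eq-gnR} and $(\phi''/\phi)'=0$. The only small deviation is the converse direction of the last claim: you argue directly from \eqref{eq-gnR} that constant sectional curvature forces $(\phi')^2-\phi\phi''-1\equiv 0$ (by comparing the radial and tangential sectional curvatures, a step worth spelling out since ``immediate'' is a bit terse), whereas the paper instead cites the classification of constant-curvature warped products over $\mathbb{S}^n$ and checks the three standard $\phi$'s; both are valid, and yours avoids the appeal to classification.
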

\begin{rem}
    As is shown in \cite{Kobayashi-1982}, the geometric meaning of the constant $C_0$ for the static case is as follows: By $\eqref{eq-RicciT}$, the Ricci tensor has, at each point of $\mathbf{N}^{n+1}$, two eigenvalues $\lambda_1=-n\frac{\phi''}{\phi}$ and $\lambda_2=-\frac{(n-1)((\phi')^2-1)+\phi\phi''}{\phi^2}$  with multiplicities $1$ and $n$ respectively. Then by $\eqref{eq-stat-2}$, we have 
    \begin{equation}
        \lambda_1-\lambda_2=(n-1)\frac{(\phi')^2-\phi\phi''-1}{\phi^2}=-(n-1)\frac{C_0}{\phi^{n+1}}.
    \end{equation}
\end{rem}
\begin{proof}
By a direct calculation, see e.g. \cite{Brendle13},
	we have
	\begin{align*}
		&(\overline{\Delta}{\phi'})\bar{g}-\overline{\nabla}^2{\phi'}+\phi'\overline{Ric}\notag\\
		=&\left[\phi^2\phi'''+(n-2)\phi\phi'\phi''-(n-1)\phi'((\phi')^2-1)\right]\sigma\\
		=&\left[\frac{1}{2}\phi^3\frac{d}{dr}\left(2\frac{\phi''}{\phi}+(n-1)\frac{(\phi')^2-1}{\phi^2}\right)\right]\sigma.
	\end{align*}
Comparing the above equation with the expression formula (\ref{eq-ScalarC}) of the scalar curvature $\bar{\rho}$, we now have
\begin{align*}
  &(\overline{\Delta}{\phi'})\bar{g}-\overline{\nabla}^2{\phi'}+\phi'\overline{Ric}= -\frac{\phi^3}{2n}\left(\frac{d}{d r}\bar{\rho}\right)\sigma,
\end{align*}
	which yields (\ref{eq-subst}) and (\ref{eq-sTa}).
Next, note that 
\begin{align}
    \frac{d}{dr}\left(\phi^{n-1}((\phi')^2-\phi\phi''-1)\right)=-\phi^{n-2}\left[\phi^2\phi'''+(n-2)\phi\phi'\phi''-(n-1)\phi'((\phi')^2-1)\right].\label{eq-ddr}
\end{align}
If $\mathbf{N}^{n+1}$ is sub-static, then (\ref{eq-subst}) implies that there exists a constant $C_0=-\phi^{n-1}(r_0)((\phi')^2-\phi\phi''-1)(r_0)$, such that
\begin{align}\label{In-ssinq}
    (\phi')^2-\phi\phi''-1\leq-\frac{C_0}{\phi^{n-1}},\,\forall r\in [r_0,\bar{r}).
\end{align}
If $\mathbf{N}^{n+1}$ is static, combining \eqref{eq-ddr} with \eqref{eq-sTa}, we have
\begin{equation*}
     \frac{d}{dr}\left(\phi^{n-1}((\phi')^2-\phi\phi''-1)\right)=0
\end{equation*}
and hence
\begin{equation*}
    \phi^{n-1}((\phi')^2-\phi\phi''-1)\equiv \text{const}:=-C_0
\end{equation*}
where $C_0$ is an universal constant not depending on the choice of $r_0$.

In the static case, if $C_0=0$ in \eqref{eq-stat-2}, i.e. $(\phi')^2-\phi\phi''-1\equiv0$. Then for $\forall X, Y, Z, W\in TN^{n+1}$, by equation \eqref{eq-gnR} we have
\begin{equation}\label{eq-RC}
	\bar{R}(X,Y,Z,W)=-\frac{\phi''}{\phi}(\bar{g}(X,Z)\bar{g}(Y,W)-\bar{g}(X,W)\bar{g}(Y,Z)).
\end{equation}
Moreover, we have $\phi'\phi''=\phi\phi'''$, which implies  
\begin{equation}\label{eq-phi''phi}
	\frac{d}{dr}\left(\frac{\phi''}{\phi}\right)=0
\end{equation}
Combing \eqref{eq-phi''phi} with \eqref{eq-RC} yields that $\mathbf{N}^{n+1}$ is of constant sectional curvature. 

On the other hand, if $\mathbf{N}^{n+1}$ is of constant sectional curvature $c$, we know that(cf. \cite{Petersen-2016}) $\mathbf{N}^{n+1}$ is either the Euclidean space $\mathbb{R}^{n+1}$ for $c=0$, the sphere $S^{n+1}\left(\frac{1}{\sqrt{c}}\right)$ for $c>0$ or the hyperbolic space $\mathbb{H}^{n+1}\left(\frac{1}{\sqrt{-c}}\right)$ for $c<0$. All of them are rotationally symmetric space equipped with the metric \eqref{eq-metric} with $\phi=r,\sin(\sqrt{c}r/)\sqrt{c}$ or $\sinh(\sqrt{-c}r)/\sqrt{-c}$ respectively. A direct calculation shows that $(\phi')^2-\phi\phi''-1\equiv 0$ in all of the three cases. This completes the proof of Lemma \ref{Lem-stacon}.
\end{proof}
Next, in the static rotationally symmetric space $\mathbf{N}^{n+1}$, we prove that there exists at least one strictly static convex point on any closed hypersurface which embedded in $\mathbf{N}^{n+1}$. This property will play a crucial role in the proof of the fact that the solution hypersurface $M_t$ becomes strictly static convex along the flow (\ref{flow-WVPF}) with static convex initial data $M_0$. 

In order to state the result clearly, we first give a specific definition of the unit outward normal vector field $\nu$. For any closed, embedded, orientable and connected hypersurface $M$ in $(\mathbf{N}^{n+1},\bar{g})$, $\mathbf{N}^{n+1}\setminus M$ has exactly two connected components and we denote the one which dose not contain $S(\bar{r})$ as $\Omega$ (if $\bar{r}=+\infty$, then we choose the bounded one). We either have $\partial\Omega=M$ or $\partial\Omega=M\cup S(r_0)$. In either case, let $\nu$ denote the unit normal vector field of $M$ pointing outward of $\Omega$.
\begin{prop}\label{sconvpoint}
    Let $\mathbf{N}^{n+1}$ be a static rotationally symmetric space satisfying $\phi, \phi'>0$ and $(\phi')^2-\phi\phi''\geq0$. Assume that $M$ is a closed, embedded hypersurface in $\mathbf{N}^{n+1}$, then there exists at least one strictly static convex point on $M$  unless $M=S(r_0)$ with $((\phi')^2-\phi\phi'')(r_0)=0$.
\end{prop}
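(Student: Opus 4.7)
The plan is to exhibit a strictly static convex point at a location where $r|_M$ attains its maximum, using the Hessian identity of Lemma \ref{lem-Phi} together with the static structure identity of Lemma \ref{Lem-stacon}. Let $p_0\in M$ satisfy $r(p_0)=\max_M r=:r_*$, and let $\Phi$ denote the primitive with $\Phi'(r)=\phi(r)$. If $r_*=r_0$, then $r\equiv r_0$ on $M$ since $r\ge r_0$ on $\mathbf{N}^{n+1}$, forcing $M=S(r_0)$; on this slice Remark \ref{rem-slice} gives $h_{ij}=\tfrac{\phi'}{\phi}(r_0)g_{ij}$ and $u=\phi(r_0)$, so strict static convexity collapses to $((\phi')^2-\phi\phi'')(r_0)>0$. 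This either holds at every point of $M$ or fails with equality, and the latter is the degenerate case excluded in the statement.

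Suppose instead $r_*>r_0$, so that $p_0$ is an interior maximum of $r|_M$. Then $\bar\nabla r(p_0)=\partial_r$ is normal to $M$, and the convention that $\nu$ points away from $\Omega$ forces $\nu(p_0)=\partial_r$ (moving in the $+\partial_r$ direction enters $\{r>r_*\}$, which lies outside $\Omega$). In particular $u(p_0)=\phi(r_*)$. Since the intrinsic Hessian of $\Phi|_M$ is negative semi-definite at its maximum, equation \eqref{eq-Phiij} gives
\begin{equation*}
\phi'(r_*)g_{ij}-u\,h_{ij}=\Phi_{ij}(p_0)\le 0,
\end{equation*}
hence $h_{ij}\ge\tfrac{\phi'}{\phi}(r_*)g_{ij}$ at $p_0$. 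Subtracting the static convex threshold tensor yields
\begin{equation*}
h_{ij}-\frac{u\phi''}{\phi\phi'}g_{ij}\ge \frac{(\phi')^2-\phi\phi''}{\phi\phi'}(r_*)\,g_{ij}
\end{equation*}
at $p_0$, so it suffices to show $((\phi')^2-\phi\phi'')(r_*)>0$.

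I expect this strict positivity step to be the main technical point, as it is where the static hypothesis is used essentially. The identity \eqref{eq-stat-2} of Lemma \ref{Lem-stacon} reads $(\phi')^2-\phi\phi''=1-C_0/\phi^{n-1}$ for a constant $C_0$. If $C_0\le 0$ the left-hand side is everywhere at least $1$, so strict positivity is immediate. If $C_0>0$, the assumption $(\phi')^2-\phi\phi''\ge 0$ evaluated at $r_0$ forces $\phi^{n-1}(r_0)\ge C_0$, and since $\phi$ is strictly increasing by $\phi'>0$ we obtain $\phi^{n-1}(r_*)>\phi^{n-1}(r_0)\ge C_0$, giving $((\phi')^2-\phi\phi'')(r_*)>0$. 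Thus $p_0$ is strictly static convex whenever $r_*>r_0$, and the only configuration left uncovered is $M=S(r_0)$ with $((\phi')^2-\phi\phi'')(r_0)=0$, matching the exceptional case of the proposition.
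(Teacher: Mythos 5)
Your proof is correct and follows essentially the same route as the paper: locate the point where $r|_M$ is maximal (the paper equivalently maximizes $\phi^2$, the same point since $\phi'>0$), use the Hessian formula $\Phi_{ij}=\phi'g_{ij}-uh_{ij}$ at that critical point to get $h_{ij}\ge\frac{\phi'}{\phi}g_{ij}$ with $u=\phi$, and then invoke the static identity $(\phi')^2-\phi\phi''=1-C_0/\phi^{n-1}$ from Lemma~\ref{Lem-stacon}, splitting on the sign of $C_0$. Your algebraic repackaging in the inequality $h_{ij}-\frac{u\phi''}{\phi\phi'}g_{ij}\ge\frac{(\phi')^2-\phi\phi''}{\phi\phi'}(r_*)\,g_{ij}$ absorbs the paper's separate treatment of the $\phi''<0$ case and makes the reduction to strict positivity of $(\phi')^2-\phi\phi''$ at $r_*$ slightly cleaner, but the substance is identical.
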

\begin{proof}
     Consider the function $\phi^2$ defined on $M$, at the maximum point $p$ of $\phi^2$, we have
     \begin{align*}
         \nabla_i\phi^2=2\phi'\bar{g}(\phi\partial_r,e_i)=0,\ \forall i=1,\cdots,n,
     \end{align*}
then $\nu$ is parallel to the direction $\partial_r$ at the point $p$ due to $\phi,\phi'>0$. Since $r(p)=\max_{x\in M}r(x)$, the enclosed domain $\Omega$ is contained in $B(r(p))$. Thus, at the point $p$, we have $\nu=\partial_r$ by our convention for the choice of the unit outward normal, and hence $u=\phi$ at $p$. 

Moreover, using (\ref{eq-Phiij}) we have
\begin{align*}
   0\geq\nabla_j\nabla_i\phi^2=2\phi'(\phi'g_{ij}-uh_{ij}).
\end{align*}
Hence, at the point $p$, there holds
\begin{align}\label{strconv}
    h_{ij}\geq\frac{\phi'}{u}g_{ij}=\frac{\phi'}{\phi}g_{ij}.
\end{align}
This means that all the principal curvatures are not smaller than $\frac{\phi'}{\phi}$ at the maximum point $p$ of the function $\phi^2$. If $\phi''(p)<0$, it's obviously that the point $p$ is a strictly static convex point due to $\phi,\phi'>0$. Without loss of generality, we assume that $\phi''(p)\geq0$.
Since $\mathbf{N}^{n+1}$ is static, by (\ref{eq-stat-2}) we have
\begin{align*}
    (\phi')^2-\phi\phi''=1-\frac{C_0}{\phi^{n-1}},
\end{align*}
for some constant $C_0$.

If $C_0\leq0$, we have $(\phi')^2-\phi\phi''\geq1$, then
$\frac{\phi'}{\phi}>\frac{\phi''}{\phi'}$.
 Thus,  $h_{ij}>\frac{u\phi''}{\phi\phi'}g_{ij}$ at the point $p$ by (\ref{strconv}). 
 
 If $C_0>0$, we observe that the function $(\phi')^2-\phi\phi''$ is strictly increasing with respect to $r$. 
 If $M$ is not the slice $S(r_0)$, then
\begin{align*}
    ((\phi')^2-\phi\phi'')(p)>((\phi')^2-\phi\phi'')(r_0)\geq0.
\end{align*}
Therefore, by (\ref{strconv}) we can see that $p$ is a strictly static convex point. 
If $M=S(r_0)$, we have $h_{ij}=\frac{\phi'}{\phi}(r_0)g_{ij}$ on $M$ by Remark \ref{rem-slice}. Hence $h_{ij}>\frac{\phi''}{\phi'}g_{ij}$  holds everywhere on $M$, unless $((\phi')^2-\phi\phi'')(r_0)=0$. 
In conclusion, there exists at least one strictly static convex point $p$ on $M$  unless $M=S(r_0)$ with $((\phi')^2-\phi\phi'')(r_0)=0$.
\end{proof}
\begin{rem}
    In the case that $\mathbf{N}^{n+1}$ is the hyperbolic space $\mathbb{H}^{n+1}$, the inequality (\ref{strconv}) yields that $h_{ij}>g_{ij}$ at the point $p$. This proves the well-known fact that on any closed hypersurface in $\mathbb{H}^{n+1}$, there exists at least one strictly horo-convex point.
\end{rem}

\subsection{Minkowski identities and evolution equations} In this subsection, we will present some known facts including the Minkowski identities for hypersurfaces in $\mathbf{N}^{n+1}$ and evolution equations along the flow \eqref{flow-WVPF} which will be used later in this paper.

\begin{lem}\label{lem-Min}
Let $\mathbf{N}^{n+1}$ be be a rotationally symmetric space equipped with the metric \eqref{eq-metric} and $M$ be a closed hypersurface of $\mathbf{N}^{n+1}$. Then the following Minkowski identities hold:
    \begin{align}
        n\int_{M}{\phi'}d\mu&=\int_{M}{uH} d\mu,\label{eq-Min1}\\
        (n-1)\int_{M}{\phi' H}d\mu&=2\int_{M}{\sigma_2 u}d\mu+\int_{M}{u\left(\overline{Ric}(\partial_r,\partial_r)-\overline{Ric}(\nu,\nu)\right)}d\mu\label{eq-Min2}.
    \end{align}
In general, for any $k=1,2,\cdots,n$ we have
\begin{align}\label{eq-Min3}
     (n-k)\int_{M}{\phi' \sigma_{k}}d\mu&=(k+1)\int_{M}{ \sigma_{k+1}}d\mu+\frac{n-k}{n-1}\int_{M}\sigma_{k}^{ij}\overline{Ric}(e_i,\nu)\langle\phi\partial_r,e_j\rangle d\mu.
\end{align}
\end{lem}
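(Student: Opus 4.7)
All three identities will be derived from the Hessian formula $\Phi_{ij} = \phi' g_{ij} - u h_{ij}$ of Lemma~\ref{lem-Phi} by contracting against the Newton transformations $\sigma_k^{ij}$ and integrating by parts on the closed hypersurface $M$.

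For \eqref{eq-Min1}, I simply trace the Hessian identity with $g^{ij}$ to recover $\Delta_g \Phi = n\phi' - uH$, then integrate on $M$ so that the divergence theorem kills the left-hand side and the identity drops out.

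For the general identity \eqref{eq-Min3}, recall that $\sigma_k^{ij}$ is symmetric and satisfies the trace identities $\sigma_k^{ij} g_{ij} = (n-k)\sigma_k$ and $\sigma_k^{ij} h_{ij} = (k+1)\sigma_{k+1}$. Contracting $\Phi_{ij} = \phi' g_{ij} - u h_{ij}$ with $\sigma_k^{ij}$ yields
\begin{equation*}
\sigma_k^{ij}\, \nabla_i \nabla_j \Phi = (n-k)\phi' \sigma_k - (k+1) u \sigma_{k+1}.
\end{equation*}
Integrating over $M$ and integrating by parts on the left,
\begin{equation*}
-\int_M (\nabla_i \sigma_k^{ij})\, \Phi_j\, d\mu = \int_M \bigl[(n-k)\phi' \sigma_k - (k+1) u \sigma_{k+1}\bigr]\, d\mu,
\end{equation*}
where $\Phi_j = \langle \phi \partial_r, e_j\rangle$. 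The remaining task is to compute $\nabla_i \sigma_k^{ij}$ and match the resulting curvature term to the Ricci contraction in \eqref{eq-Min3}. By iterating the Codazzi equation, $\nabla_i \sigma_k^{ij}$ is expressible via $\sigma_{k-1}$ contracted with the ambient Riemann tensor $\overline{R}(\nu,\cdot,\cdot,\cdot)$; for $k=1$ the Codazzi identity collapses this to $\nabla_i \sigma_1^{ij} = \overline{Ric}(\nu, e^j)$, which already matches the $k=1$ case.

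The main obstacle is to reduce the full Riemann contraction appearing for general $k$ to the clean Ricci form $\sigma_k^{ij}\, \overline{Ric}(e_i,\nu)\, \langle \phi\partial_r, e_j\rangle$ with the combinatorial factor $\tfrac{n-k}{n-1}$. For this I would substitute the decomposition of $\overline{R}$ from Proposition~\ref{prop-RT}: the ``sectional-curvature'' piece with coefficient $-\tfrac{(\phi')^2-1}{\phi^2}$ contributes just as it would in a space form and, after tracing with $\sigma_k^{ij}$, collapses into a Ricci-type expression via the standard Newton-trace identities; the ``radial'' piece with coefficient $\tfrac{(\phi')^2-\phi\phi''-1}{\phi^2}$ pairs well with the fact, visible in \eqref{eq-RicciT}, that $\overline{Ric}$ is already diagonal in the radial/spherical split. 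Combining both pieces, the two contributions should reassemble into a single Ricci contraction with the displayed factor $\tfrac{n-k}{n-1}$. Finally, \eqref{eq-Min2} will follow as the $k=1$ specialization of \eqref{eq-Min3}: decomposing $\phi\partial_r = u\nu + \nabla\Phi$ and exploiting the absence of radial/spherical cross terms in $\overline{Ric}$, which gives $\overline{Ric}(\nu,\partial_r) = \tfrac{u}{\phi}\,\overline{Ric}(\partial_r,\partial_r)$, rearranges the $k=1$ contracted Ricci term into exactly $\int_M u\bigl(\overline{Ric}(\partial_r,\partial_r) - \overline{Ric}(\nu,\nu)\bigr)\, d\mu$.
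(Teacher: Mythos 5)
Your plan is the same one the paper uses: contract the Hessian identity $\Phi_{ij}=\phi'g_{ij}-uh_{ij}$ with a Newton tensor, integrate by parts, and then identify the divergence-of-Newton-tensor term with a Ricci contraction. Two points, one cosmetic and one substantive.

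First, an indexing slip. With the paper's convention $\sigma_k^{ij}=\partial\sigma_k/\partial h_{ij}$, the trace identities you quote, $\sigma_k^{ij}g_{ij}=(n-k)\sigma_k$ and $\sigma_k^{ij}h_{ij}=(k+1)\sigma_{k+1}$, are really the traces of $\sigma_{k+1}^{ij}$ (equivalently of the $k$-th Newton tensor $T_k$). To land on $(n-k)\phi'\sigma_k-(k+1)u\sigma_{k+1}$ you must contract $\Phi_{ij}$ against $\sigma_{k+1}^{ij}$, which is what the paper does via the vector field $W_{k+1}=\sigma_{k+1}^{ij}\langle\phi\partial_r,e_i\rangle e_j$. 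After integration by parts the boundary term is $\sigma^{ij}_{k+1,j}\langle\phi\partial_r,e_i\rangle$ and the RHS of \eqref{eq-Min3} carries $\sigma_k^{ij}$, so both indices appear, shifted by one; keeping them straight matters. (Note also your intermediate display correctly produces $(k+1)\int u\sigma_{k+1}$; the $u$ is in fact needed, and the paper's stated \eqref{eq-Min3} and the line above it appear to have dropped it, as one sees by putting $k=1$ and comparing with \eqref{eq-Min2}.)

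Second, and more seriously, the step you describe as ``the two contributions should reassemble into a single Ricci contraction with the displayed factor $\tfrac{n-k}{n-1}$'' is the entire substance of the lemma for $k\geq 2$, and you have asserted it rather than proved it. In a general Riemannian ambient space, $\nabla_j\sigma_{k+1}^{ij}$ is a contraction of $\sigma_k^{\cdot\cdot}$ against the full Riemann tensor, \emph{not} the Ricci tensor; the collapse to
\[
\sigma^{ij}_{k+1,j}\langle\phi\partial_r,e_i\rangle=-\tfrac{n-k}{n-1}\,\sigma_k^{ij}\overline{Ric}(e_i,\nu)\langle\phi\partial_r,e_j\rangle
\]
is a genuine computation that exploits the warped-product form of $\bar R$ given in Proposition~\ref{prop-RT}. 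The paper does not derive it: it cites Brendle--Eichmair, \cite{BE13}*{Proposition 8}. Your sketch correctly identifies the two pieces of $\bar R$ (the constant-curvature block and the ``radial'' block proportional to $(\phi')^2-\phi\phi''-1$) as the ingredients, and indeed the constant-curvature piece contributes nothing because $\sigma_{k+1}^{ij}$ is divergence-free in a space form, but verifying that the radial piece contracts down to exactly the stated Ricci expression with the factor $\tfrac{n-k}{n-1}$ requires tracking the contractions carefully (it is essentially equivalent to the Brendle--Eichmair computation). You would need to either carry out that computation or cite it; as written this is a gap. Your reduction of \eqref{eq-Min2} to the $k=1$ case of \eqref{eq-Min3} via $\sigma_1^{ij}=g^{ij}$, $\nabla\Phi=\phi\partial_r-u\nu$, and $\overline{Ric}(\nu,\partial_r)=\tfrac{u}{\phi}\overline{Ric}(\partial_r,\partial_r)$ is correct, and \eqref{eq-Min1} by tracing and integrating is fine.
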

\begin{proof}
    The identities (\ref{eq-Min1}) and (\ref{eq-Min2}) can be found in \cites{GLW-2019,LP-2023}. Next, we will give a simple proof of the general identity (\ref{eq-Min3}). 

   For any fixed  $k\in\{1,2,\cdots,n\}$, denote the vector field $W_k=\sigma_k^{ij}\langle\phi\partial_r,e_i\rangle e_j$, where $\sigma_k^{ij}=\frac{\partial\sigma_k}{\partial h_{ij}}$. Then we have
    \begin{align*}
    div_{M}W_{k+1}=&\sigma^{ij}_{k+1,j}\langle\phi\partial_r,e_i\rangle+\sigma_{k+1}^{ij}(\phi'g_{ij}-uh_{ij})\nonumber\\
        =&\sigma^{ij}_{k+1,j}\langle\phi\partial_r,e_i\rangle+(n-k)\phi'\sigma_{k}-(k+1)u\sigma_{k+1}.
    \end{align*}
Integrating the above equality, we immediately obtain
\begin{align}\label{Min3-eq1}
    (n-k)\int_{M}\phi'\sigma_{k}d\mu=(k+1)\int_{M}{ \sigma_{k+1}}d\mu-\int_{M}\sigma^{ij}_{k+1,j}\langle\phi\partial_r,e_i\rangle d\mu.
\end{align}
Recall that Brendle and Eichmair gave a concrete expression of $\sigma^{ij}_{k+1,j}\langle\phi\partial_r,e_i\rangle$  as follows (see \cite{BE13}*{Proposition 8}):
\begin{align}\label{Min3-eq2}
   \sigma^{ij}_{k+1,j}\langle\phi\partial_r,e_i\rangle=-\frac{n-k}{n-1}\sigma_{k}^{ij}\overline{Ric}(e_i,\nu)\langle\phi\partial_r,e_j\rangle.
\end{align}
Substituting (\ref{Min3-eq2}) into (\ref{Min3-eq1}), then the identity (\ref{eq-Min3}) follows.
\end{proof}
Next, we derive the evolution equations of some geometric quantities along a general flow
\begin{equation}\label{flow-general}
\partial_t X=\mathcal{F}\nu,
\end{equation}
where $\mathcal{F}$ is the speed function.

\begin{prop}\label{prop-evolution}
    Along the flow \eqref{flow-general},
 the induced metric $g_{ij}$, the unit outward normal $\nu$, the second fundamental form $h_{ij}$, the mean curvature $H$ and the area element $d\mu_t$ of the flow hypersurface $M_t$ evolve as follows: (see e.g. \cite{GLW-2019}):
\begin{align}
    \partial_tg_{ij}&=2\mathcal{F}h_{ij},\label{eveq-metric}\\
    \partial_t\nu&=-\nabla\mathcal{F},\label{eveq-normal}\\
    \partial_th_{ij}&=-\nabla_i\nabla_j\mathcal{F}+\mathcal{F}(h^2)_{ij}-\mathcal{F}\bar{R}_{i\nu j\nu},\label{eveq-h1}\\
    \partial_t H&=-\Delta_g{\mathcal{F}}-\mathcal{F}|A|^2-\mathcal{F}\overline{Ric}(\nu,\nu),\label{eveq-H}\\
    \partial_td\mu_t&=\mathcal{F}H d\mu_t\label{eq-area}
\end{align}
and hence if we denote $\Omega_t$ as the domain bounded by the hypersurface $M_t$ and the slice $S(r_0)$ in $\mathbf{N}^{n+1}$, then we have the following evolution equations for the weighted area, the weighted mean curvature integral of $M_t$ and the weighted $\alpha$-volume of $\Omega_t$:
\begin{align}
    \partial_t A_{0,\phi}(M_t) &=\int_{M_t}{(\frac{u\phi''}{\phi}+\phi'H)\mathcal{F}}\,d\mu_t,\label{va-Aphi}\\
    \partial_t A_{1,\phi}(M_t)&=\int_{M_t}\left({\frac{u\phi''}{\phi}H-\Delta_g{\phi'}+2\phi'\sigma_2-\phi'\overline{Ric}(\nu,\nu)}\right)\mathcal{F}\,d\mu_t, \label{va-Hphi}\\
    \partial_t V_{\phi}^{\alpha}(\Omega_t)&=\int_{M_t}{(\phi')^{\alpha}\mathcal{F}}\,d\mu_t.\label{va-Vphi}
\end{align}
\end{prop}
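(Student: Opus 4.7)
\medskip

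\noindent\textbf{Proof proposal.} The first five formulas are the standard first-variation formulas for a family of hypersurfaces moving in the normal direction; they are essentially local and metric-dependent only through $(g,h,\bar g)$, so the plan is to compute in a local coordinate frame $\{e_i=\partial_i X\}$ and then translate each identity to the integral form.

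For the pointwise formulas \eqref{eveq-metric}--\eqref{eveq-H}, I would proceed as follows. First, $\partial_t g_{ij}=\partial_t\bar g(\partial_iX,\partial_jX)=\bar g(\bar\nabla_{\partial_tX}\partial_iX,\partial_jX)+\bar g(\partial_iX,\bar\nabla_{\partial_tX}\partial_jX)$; using $\bar\nabla_{\partial_tX}\partial_iX=\bar\nabla_{\partial_iX}(\mathcal F\nu)$ and the Weingarten relation yields \eqref{eveq-metric}. Next, differentiating $\bar g(\nu,\nu)=1$ and $\bar g(\nu,e_i)=0$ in $t$ determines $\partial_t\nu$ tangentially, and one checks $\bar g(\partial_t\nu,e_j)=-e_j(\mathcal F)$, giving \eqref{eveq-normal}. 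For \eqref{eveq-h1}, I would start from $h_{ij}=-\bar g(\nu,\bar\nabla_{e_i}e_j)$, differentiate in $t$, and commute derivatives; the commutator produces the Gauss curvature term $-\mathcal F\bar R_{i\nu j\nu}$, while the Weingarten terms combine to $\mathcal F(h^2)_{ij}$, and the Hessian of $\mathcal F$ comes from the term where the time derivative hits $\bar\nabla_{e_i}e_j$. Tracing with $g^{ij}$ and using $g^{ij}\bar R_{i\nu j\nu}=\overline{\mathrm{Ric}}(\nu,\nu)$ yields \eqref{eveq-H}. The evolution \eqref{eq-area} of $d\mu_t=\sqrt{\det g}\,dx$ follows from \eqref{eveq-metric} and $\partial_t\log\det g=g^{ij}\partial_tg_{ij}=2\mathcal F H$.

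For the integral evolutions \eqref{va-Aphi}--\eqref{va-Vphi}, I would differentiate under the integral and use that $\phi=\phi(r)$ is a function of position in $\mathbf N^{n+1}$ alone, so along $X(\cdot,t)$
\begin{equation*}
\partial_t(\phi'\circ X)=\phi''\,\partial_tr=\phi''\,\bar g(\partial_tX,\partial_r)=\phi''\mathcal F\bar g(\nu,\partial_r)=\frac{u\phi''}{\phi}\mathcal F,
\end{equation*}
since $u=\bar g(\phi\partial_r,\nu)$. Combining this with \eqref{eq-area} immediately gives \eqref{va-Aphi}. For \eqref{va-Hphi}, the same computation plus \eqref{eveq-H} produces an integrand
\begin{equation*}
\Bigl(\tfrac{u\phi''}{\phi}H+\phi'H^2-\phi'\Delta_g\mathcal F-\phi'|A|^2\mathcal F-\phi'\overline{\mathrm{Ric}}(\nu,\nu)\mathcal F\Bigr)\,d\mu_t;
\end{equation*}
since $M_t$ is closed, integration by parts gives $\int_{M_t}\phi'\Delta_g\mathcal F\,d\mu_t=\int_{M_t}(\Delta_g\phi')\mathcal F\,d\mu_t$, and the identity $H^2-|A|^2=2\sigma_2$ converts the quadratic curvature terms into $2\phi'\sigma_2\mathcal F$, which is exactly \eqref{va-Hphi}. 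Finally, \eqref{va-Vphi} is the standard first variation of a weighted volume: since $\partial\Omega_t=M_t\cup S(r_0)$ and only $M_t$ moves, one has $\partial_tV_\phi^\alpha(\Omega_t)=\int_{M_t}(\phi')^\alpha\bar g(\partial_tX,\nu)\,d\mu_t=\int_{M_t}(\phi')^\alpha\mathcal F\,d\mu_t$.

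I do not expect any genuine obstacle here; all the identities are routine and the main care point is the sign conventions (unit \emph{outward} normal, the sign in $h_{ij}=-\bar g(\nu,\bar\nabla_{e_i}e_j)$) and the correct use of \eqref{eq-conformal} in the form $\bar g(\nu,\partial_r)=u/\phi$ when differentiating $\phi'\circ X$ in time.
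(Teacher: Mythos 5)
Your proposal is correct and is essentially the standard computation; note that the paper itself offers no explicit proof of Proposition~\ref{prop-evolution}, only citing \cite{GLW-2019} for the pointwise variation formulas \eqref{eveq-metric}--\eqref{eq-area}, and leaving \eqref{va-Aphi}--\eqref{va-Vphi} as routine consequences. Your derivation of the integral evolutions — using $\partial_t(\phi'\circ X)=\tfrac{u\phi''}{\phi}\mathcal F$, the area element evolution, integration by parts to replace $\phi'\Delta_g\mathcal F$ by $(\Delta_g\phi')\mathcal F$, and $H^2-|A|^2=2\sigma_2$ — is exactly the intended argument and fills in what the paper leaves implicit.
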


In this paper, we deal with the case that the speed function $\mathcal{F}=n-\frac{uH}{\phi'}$. Combining Lemma \ref{lem-Min} with the evolution equation \eqref{va-Vphi}, we have the following monotonicities for $V_{\phi}^{\alpha}(\Omega_t)$ along the flow \eqref{flow-WVPF}.

\begin{lem}\label{lem-mono}
     Let $\mathbf{N}^{n+1}$ be a rotationally symmetric space equipped with the metric \eqref{eq-metric} and $M_t$ is the smooth graphical solution of the flow \eqref{flow-WVPF}. Denote by $\Omega_t$ the domain enclosed by the hypersurface $M_t$ and the slice $S(r_0)$. Then along the flow \eqref{flow-WVPF}, 
     \begin{itemize}
         \item[(i)] $V_{\phi}(\Omega_t)$ keeps invariant.
         \item[(ii)] If $\phi''(\alpha-1)<0$, then $V_{\phi}^{\alpha}(\Omega_t)$ is monotone increasing and is strictly incresing unless $M_t$ is a slice of $\mathbf{N}^{n+1}$.
         \item[(iii)] If $\phi''(\alpha-1)>0$, then $V_{\phi}^{\alpha}(\Omega_t)$ is monotone decreasing and is strictly decresing unless $M_t$ is a slice of $\mathbf{N}^{n+1}$.
     \end{itemize}
\end{lem}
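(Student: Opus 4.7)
The plan is to prove each of the three monotonicity statements directly from the evolution formula \eqref{va-Vphi}, which with the flow speed $\mathcal{F} = n - uH/\phi'$ gives
\begin{equation*}
\partial_t V_{\phi}^{\alpha}(\Omega_t) = \int_{M_t} (\phi')^{\alpha-1}\bigl(n\phi' - uH\bigr)\, d\mu_t.
\end{equation*}
For part (i), I would take $\alpha = 1$ and apply the Minkowski identity \eqref{eq-Min1} directly: the integrand becomes $n\phi' - uH$, which integrates to zero on $M_t$, so $V_{\phi}(\Omega_t)$ is constant.

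For parts (ii) and (iii), the key identity is \eqref{eq-DeltaPhi}, namely $n\phi' - uH = \Delta_g \Phi$, where $\Phi'(r) = \phi(r)$. I would use this to rewrite
\begin{equation*}
\partial_t V_{\phi}^{\alpha}(\Omega_t) = \int_{M_t} (\phi')^{\alpha-1}\,\Delta_g \Phi\, d\mu_t
\end{equation*}
and then integrate by parts on the closed hypersurface $M_t$ (no boundary term, since $M_t$ is the graphical image of $\mathbb{S}^n$). This yields
\begin{equation*}
\partial_t V_{\phi}^{\alpha}(\Omega_t) = -(\alpha-1)\int_{M_t} (\phi')^{\alpha-2}\,\langle \nabla \phi', \nabla \Phi\rangle\, d\mu_t.
\end{equation*}
The gradients are tangential projections of the ambient gradients: $\nabla \Phi = \phi\,(\partial_r)^T$ and $\nabla \phi' = \phi''\,(\partial_r)^T$. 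Since $|(\partial_r)^T|^2 = 1 - u^2/\phi^2 = (\phi^2 - u^2)/\phi^2$, I obtain
\begin{equation*}
\langle \nabla \phi', \nabla \Phi\rangle = \phi''\,\frac{\phi^2 - u^2}{\phi} \geq 0,
\end{equation*}
using the Cauchy--Schwarz bound $u^2 = \langle \phi\partial_r, \nu\rangle^2 \leq \phi^2$ and the assumption $\phi'' > 0$ from Definition \ref{defn-N}. Substituting back,
\begin{equation*}
\partial_t V_{\phi}^{\alpha}(\Omega_t) = -(\alpha-1)\int_{M_t} (\phi')^{\alpha-2}\,\phi''\,\frac{\phi^2 - u^2}{\phi}\, d\mu_t,
\end{equation*}
whose sign is controlled entirely by $-(\alpha-1)\phi''$: nonnegative when $\phi''(\alpha-1) < 0$ and nonpositive when $\phi''(\alpha-1) > 0$. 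This gives the desired monotonicities.

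For the equality characterization, the integrand vanishes identically if and only if $\phi^2 - u^2 \equiv 0$ on $M_t$, i.e.\ $u \equiv \phi$, which is equivalent to $(\partial_r)^T = 0$ everywhere on $M_t$ (so $\partial_r$ is normal), hence $M_t$ is a slice of $\mathbf{N}^{n+1}$. Thus strict monotonicity fails exactly on slices. There is no serious obstacle here: once one recognizes the identification $n\phi' - uH = \Delta_g\Phi$ and performs the integration by parts, the rest is a direct sign check; the only mild care needed is in computing $|(\partial_r)^T|^2$ correctly to make the $\phi\phi''(\phi^2-u^2)/\phi^2$ weight manifest.
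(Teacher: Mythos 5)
Your proof is correct and follows essentially the same route as the paper: part (i) via the Minkowski identity, and parts (ii)–(iii) by rewriting $n\phi'-uH=\Delta_g\Phi$, integrating by parts, and observing that $\langle\nabla\phi',\nabla\Phi\rangle=\phi\phi''|\nabla r|^2$ has sign determined by $\phi''$, so the derivative of $V_\phi^\alpha$ has sign $-(\alpha-1)\phi''$. The paper records the integrand as $\phi(\phi')^{\alpha-2}\phi''|\nabla r|^2$, which is exactly your $\phi''(\phi^2-u^2)/\phi\cdot(\phi')^{\alpha-2}$; the equality characterization ($u\equiv\phi$ iff $M_t$ is a slice) is the same.
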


\begin{proof}
First, combining \eqref{eq-Min1} with \eqref{va-Vphi}, we have
\begin{equation}\label{eq-Vphi}
    \partial_t V_{\phi}(\Omega_t)=\int_{M_t}{(n\phi'-uH)} d\mu_t=0
\end{equation}
for $\alpha=1$ and hence $V_{\phi}(\Omega_t)$ keeps invariant along the flow.

For general $\alpha$, we calculate as follows:
\begin{align}
    \partial_t V_{\phi}^{\alpha}(\Omega_t)&=\int_{M_t}{(\phi')^{\alpha-1}\Delta_g\Phi}\,d\mu_t\notag\\
    &=-\int_{M_t}{\langle\nabla{(\phi')^{\alpha-1}},\nabla\Phi\rangle}\,d\mu_t\notag\\
    &=-(\alpha-1)\int_{M_t}{\phi(\phi')^{\alpha-2}\phi''|\nabla r|^2}\,d\mu_t,\label{mono-Vphik}
\end{align}
where we have used \eqref{eq-DeltaPhi} in the first equality. Then Lemma \ref{lem-mono} follows from \eqref{eq-Vphi} and \eqref{mono-Vphik} directly.
\end{proof}

\section{Long time existence and exponential convergence}\label{sec-long}
Throughout this section, we assume that $\mathbf{N}^{n+1}$ is a rotationally symmetric space equipped with the metric: $\bar{g}=dr^2+\phi^2(r)\sigma$ and satisfies the following assumptions:
\begin{equation*}
  \phi,\phi'>0,\quad (\phi')^2-\phi\phi''\geq 0.
\end{equation*}


Since the initial hypersurface $M_0$ is graphical, we can assume that the flow \eqref{flow-WVPF} admits a smooth graphical solution on the maximal time interval $[0,T^{\star})$. Then as discussed in \S\ref{subsection-graph}, the flow \eqref{flow-WVPF} is equivalent to a scalar parabolic PDE on $\mathbb{S}^n$ for the function $\gamma(\cdot,t)$ which satisfies
\begin{equation}\label{eq-Sf}
\partial_t\gamma=(n-\frac{uH}{\phi'})\frac{\omega}{\phi},
\end{equation}
on the time interval $[0,T^{\star})$, where $\omega=\sqrt{1+|D\gamma|^2}$ and $u=\frac{\phi}{\omega}$. Since we have the expression \eqref{eq-WG1} for the Weingarten matrix $\mathcal{W}=\{h_j^i\}$ of $M$, then we have 
\begin{equation}\label{eq-H}
    H=\frac{n\phi'}{\phi\omega}-\frac{1}{\phi\omega}(\sigma^{ik}-\frac{\gamma^i\gamma^k}{\omega^2})\gamma_{ik}
\end{equation}
and hence we can rewritten equation \eqref{eq-Sf} as
\begin{align}
\partial_t \gamma&=\frac{1}{\phi\phi'\omega}(\sigma^{ik}-\frac{\gamma^i\gamma^k}{\omega^2})\gamma_{ik}+n\frac{|D\gamma|^2}{\phi\omega}\notag\\
&=\frac{1}{\phi\phi'\omega}\Delta_0{\gamma}-\frac{\gamma^k\omega_k}{\phi\phi'\omega^2}+n\frac{|D\gamma|^2}{\phi\omega},\label{eq-nodf}\\
&=\text{div}(\frac{D\gamma}{\phi\phi'\omega})+\left(\phi\phi''+(n+1)(\phi')^2\right)\frac{|D\gamma|^2}{\phi(\phi')^2\omega},\label{eq-qlpe}
\end{align}
where we have used the fact that $\omega_k=\frac{\gamma^i\gamma_{ik}}{\omega}$ and $\frac{d\gamma}{d r}=\frac{1}{\phi}$. Then the flow \eqref{flow-WVPF} is equivalent to the following parabolic initial value problem:
\begin{equation}\label{flow-QF}
	\left\{\begin{aligned}
		\partial_t\gamma=&~\text{div}(\frac{D\gamma}{\phi\phi'\omega})+\left((n+1)(\phi')^2+\phi\phi''\right)\frac{|D\gamma|^2}{\phi(\phi')^2\omega}\quad \text{for}\quad  (z,t)\in\mathbb{S}^n\times[0,T^{\star}),\\
		\gamma(\cdot,0)=&~\gamma_0(\cdot).
	\end{aligned}\right.
\end{equation}

Note that the equation \eqref{eq-qlpe} is a divergent quasi-linear parabolic equation, then once the $C^0$ and $C^1$ estimates of $\gamma$ are established, the higher regularity a priori estimates of the solution $\gamma$ follow the standard parabolic theory (cf. \cites{Ural1991,Lieb96} for details).

\subsection{$C^0$ estimate}
In this subsection, we show that $\gamma(\cdot,t)$ is uniformly bounded from above and below.
\begin{prop}\label{prop-C0}
Let $\gamma(\cdot,t)$ be a solution of the initial value problem \eqref{flow-QF}, then for $(z,t)\in\mathbb{S}^n\times[0,T^{\star})$, we have
\begin{align}\label{C0-est} \min_{\theta\in\mathbb{S}^n}\gamma(\theta,0)\leq\gamma(\theta,t)\leq\max_{\theta\in \mathbb{S}^n}\gamma(\theta,0).
    \end{align}
\end{prop}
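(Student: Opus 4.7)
The plan is to apply the parabolic maximum principle directly to the scalar equation \eqref{eq-nodf} for $\gamma$. The key structural observation is that the right-hand side of \eqref{eq-nodf} contains no zeroth-order term in $\gamma$, and every non-diffusive term is a multiple of $|D\gamma|^2$ or of $\gamma^k \omega_k$, both of which vanish at a spatial extremum. Therefore, at such an extremum the quasilinear equation collapses to a pure diffusion inequality and the standard maximum principle applies.

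Concretely, I would use Hamilton's trick: define the Lipschitz functions
\begin{equation*}
M(t) := \max_{\theta \in \mathbb{S}^n} \gamma(\theta,t), \qquad m(t) := \min_{\theta \in \mathbb{S}^n} \gamma(\theta,t),
\end{equation*}
and for almost every $t \in [0,T^{\star})$ write $M'(t) = \partial_t \gamma(\theta^{\ast}(t), t)$ where $\theta^{\ast}(t) \in \mathbb{S}^n$ attains the spatial maximum (such a point exists by compactness of $\mathbb{S}^n$). At $(\theta^{\ast}(t),t)$ one has $D\gamma = 0$, so $\omega = 1$, $|D\gamma|^2 = 0$, and $\omega_k = \gamma^i \gamma_{ik}/\omega = 0$; plugging these into \eqref{eq-nodf} yields
\begin{equation*}
\partial_t \gamma(\theta^{\ast}(t),t) \;=\; \frac{1}{\phi\phi'}\,\Delta_0 \gamma(\theta^{\ast}(t),t) \;\leq\; 0,
\end{equation*}
since $\phi, \phi' > 0$ and $\Delta_0 \gamma \leq 0$ at an interior spatial maximum. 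Hence $M'(t) \leq 0$ for a.e.\ $t$, so $M(t) \leq M(0)$. The completely symmetric argument at the spatial minimum gives $m'(t) \geq 0$ a.e., hence $m(t) \geq m(0)$. Combining the two bounds yields \eqref{C0-est}.

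There is no serious obstacle here; the whole purpose of rewriting the flow in the scalar form \eqref{flow-QF} was to reduce the $C^0$ estimate to a textbook maximum principle. As a sanity check one can also argue geometrically: at a spatial maximum of the graphing function $r(\cdot,t)$ the outer normal equals $\partial_r$, so $u = \phi$, and by \eqref{eq-H} together with $\Delta_0 \gamma \leq 0$ one has $H \geq n\phi'/\phi$, whence the normal speed satisfies $n - uH/\phi' \leq 0$. This says the top of the graph cannot rise; dually the bottom cannot descend. Either formulation is a comparison with the stationary slice $S(r)$ and gives the claim.
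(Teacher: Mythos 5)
Your argument is correct and is essentially the same as the paper's: both note that at a spatial critical point of $\gamma$ one has $D\gamma=0$ and $\omega=1$, so equation \eqref{eq-nodf} reduces to $\partial_t\gamma=\frac{1}{\phi\phi'}\Delta_0\gamma$, and the scalar parabolic maximum principle then gives \eqref{C0-est}. Your invocation of Hamilton's trick simply makes explicit the standard maximum-principle step the paper leaves implicit.
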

\begin{proof}
At critical points of $\gamma$, we have
\begin{equation}\label{con-cri}
D \gamma=0,\quad \omega=1.
\end{equation}
Combining  \eqref{con-cri} with \eqref{eq-nodf}, 
we have
\begin{equation}
\partial_t\gamma=\frac{1}{\phi\phi'}\Delta_0 \gamma
\end{equation}
at critical points of $\gamma$. Then by the standard maximum principle, we get the uniform lower and upper bounds \eqref{C0-est} for $\gamma$.
\end{proof}
\begin{rem}
    Since $\gamma$ is an increasing function of $r$, Proposition \ref{prop-C0} implies that the radial function $r(\cdot,t)$ is also attained its maximum and minimum at time $t=0$.
\end{rem}
\begin{rem}
    The assumption $(\phi')^2-\phi\phi''\geq 0$ is no need for getting the $C^0$ estimate of $\gamma$.
\end{rem}

\subsection{$C^1$ estimate} In this subsection, we derive the uniform gradient estimate for $\gamma$. To see this, we first derive the evolution equation for $|D\gamma|^2$.
\begin{lem}\label{lem-eeDgamma}
Let $\gamma(\cdot,t)$ be a solution to the initial value problem \eqref{flow-QF} on the time interval $[0,T^{\star})$. We assume that $|D\gamma|^2$ attains its maximum value at the point $p$, then at the point $p$, there holds
\begin{align}\label{C1-est-eq0}
         \partial_t\vert D\gamma\vert^2=&\frac{1}{\phi\phi'\omega}\left(\sigma^{ij}-\frac{\gamma^i\gamma^j}{\omega^2}\right)(\vert D\gamma\vert^2)_{ij}-\frac{2|D^2\gamma|^2}{\phi\phi'\omega}-2\frac{(\phi')^2+\phi\phi''}{\phi(\phi')^2\omega}{\Delta_0}\gamma\vert D\gamma\vert^2\nonumber\\
        &-2n\frac{\phi'}{\phi\omega}\vert D\gamma\vert^4-2(n-1)\frac{|D\gamma|^2}{\phi\phi'\omega}.
     \end{align}
\end{lem}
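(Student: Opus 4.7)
The plan is to differentiate the non-divergence form
\[
\partial_t\gamma=\frac{1}{\phi\phi'\omega}a^{ik}\gamma_{ik}+\frac{n|D\gamma|^2}{\phi\omega},\qquad a^{ik}:=\sigma^{ik}-\frac{\gamma^i\gamma^k}{\omega^2},
\]
given in \eqref{eq-nodf}, and then contract with $2\gamma^j$ to obtain $\partial_t|D\gamma|^2=2\gamma^j D_j\partial_t\gamma$. Since $r$ and $\gamma$ are related by $d\gamma/dr=1/\phi$, we will repeatedly use the chain-rule identities $\phi_j=\phi\phi'\gamma_j$, $\phi'_j=\phi\phi''\gamma_j$, and $\omega_j=\gamma^i\gamma_{ij}/\omega$. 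The crucial simplification comes from evaluating at a maximum point $p$ of $|D\gamma|^2$, where $(|D\gamma|^2)_j=2\gamma^i\gamma_{ij}=0$ for all $j$; consequently $\gamma^j\omega_j=0$ and any factor of the form $\gamma^j\gamma^i\gamma_{ij}$ drops out.

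Then I would split $2\gamma^jD_j\partial_t\gamma$ into four pieces, corresponding to the differentiations of $1/(\phi\phi'\omega)$, $a^{ik}$, $\gamma_{ik}$, and the lower-order term $n|D\gamma|^2/(\phi\omega)$.  Using the max-point identities:

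\textbf{(i)} A direct computation gives $\gamma^jD_j(1/(\phi\phi'\omega))=-((\phi')^2+\phi\phi'')|D\gamma|^2/(\phi(\phi')^2\omega)$, while $a^{ik}\gamma_{ik}=\Delta_0\gamma$ at $p$. This contributes the term $-2((\phi')^2+\phi\phi'')|D\gamma|^2\,\Delta_0\gamma/(\phi(\phi')^2\omega)$.

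\textbf{(ii)} The factor $\gamma^jD_ja^{ik}\cdot\gamma_{ik}$ vanishes identically at $p$, since every surviving term contains a contraction $\gamma^k\gamma_{\ell k}=0$.

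\textbf{(iii)} For the third-derivative term $(2/(\phi\phi'\omega))a^{ik}\gamma^j\gamma_{ikj}$, I would apply the Ricci identity on $\mathbb{S}^n$ (sectional curvature $1$):
\[
\gamma_{ikj}=\gamma_{ijk}-\sigma_{ki}\gamma_j+\sigma_{ji}\gamma_k,
\]
then rewrite $\gamma^j\gamma_{ijk}$ via $D_k(\gamma^j\gamma_{ij})=\tfrac12(|D\gamma|^2)_{ik}$, which yields the Bochner-type identity $\gamma^j\gamma_{ijk}=\tfrac12(|D\gamma|^2)_{ik}-\sigma^{j\ell}\gamma_{\ell k}\gamma_{ij}$. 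Contracting with $a^{ik}$ at $p$ and using $a^{ik}\sigma^{j\ell}\gamma_{\ell k}\gamma_{ij}=|D^2\gamma|^2$, $a^{ik}\sigma_{ki}=n-|D\gamma|^2/\omega^2$, and $a^{ik}\gamma_i\gamma_k=|D\gamma|^2/\omega^2$, the combination $-n|D\gamma|^2+|D\gamma|^2(1+|D\gamma|^2)/\omega^2$ collapses to $-(n-1)|D\gamma|^2$ (using $\omega^2=1+|D\gamma|^2$). This yields the first, second, and last terms of the target formula.

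\textbf{(iv)} For the lower-order piece, at $p$ only $\gamma^jD_j(1/(\phi\omega))=-\phi'|D\gamma|^2/(\phi\omega)$ survives (since $D_j|D\gamma|^2=0$), giving $-2n\phi'|D\gamma|^4/(\phi\omega)$.

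Summing the four contributions reproduces \eqref{C1-est-eq0}. The main subtlety, and the step where I expect things to be easy to mishandle, is \textbf{(iii)}: commuting the third covariant derivatives correctly on $\mathbb{S}^n$ and verifying that the two curvature-type remainders (one from the Ricci identity, one from $a^{ik}\sigma_{ki}$) combine with the $|D\gamma|^4/\omega^2+|D\gamma|^2/\omega^2=|D\gamma|^2$ simplification to give precisely the single coefficient $-2(n-1)/(\phi\phi'\omega)$; without the maximum-point conditions this collapse fails, which is why the identity is only asserted pointwise at $p$.
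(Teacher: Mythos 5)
Your proposal is correct and takes essentially the same route as the paper: differentiate the non-divergence form \eqref{eq-nodf}, kill cross-terms using the critical-point identities $D|D\gamma|^2=0$ and $D\omega=0$ at $p$, and commute the third covariant derivatives via the round-sphere Ricci identity to convert $\gamma^j\gamma_{ikj}$ into $(|D\gamma|^2)_{ik}$, $|D^2\gamma|^2$, and the lower-order $-(n-1)|D\gamma|^2$ term. The only difference is organizational — you contract directly against $a^{ik}$ in a single stroke, whereas the paper first derives \eqref{eq-partialDgamma} and then invokes the Bochner identity \eqref{eq-LDgamma} together with $(|D\gamma|^2)_{ij}=2\omega\omega_{ij}$ at $p$ — and the two computations are line-for-line equivalent.
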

\begin{proof}
By the critical point condition, at the point $p$, we have
\begin{align}\label{eq-conC1}
        D\vert D\gamma\vert^2=0,\quad D\omega=0.
    \end{align}
Using equation \eqref{eq-nodf} and critical point condition \eqref{eq-conC1}, we calculate as follows:
\begin{align}
        \partial_t\vert D\gamma\vert^2=2\gamma^i\gamma_{it}=&2\gamma^iD_i\left(\frac{1}{\phi\phi'\omega}\Delta_0\gamma-\frac{\gamma^k\omega_k}{\phi\phi'\omega^2}+n\frac{\vert D\gamma\vert^2}{\phi\omega}\right)\notag\\
        =&2\frac{\gamma^iD_i\Delta_0\gamma}{\phi\phi'\omega}-2\frac{(\phi')^2+\phi\phi''}{\phi(\phi')^2\omega}\Delta_0\gamma\vert D\gamma\vert^2-2\frac{\gamma^i\gamma^k\omega_{ki}}{\phi\phi'\omega^2}-2n\frac{\phi'}{\phi\omega}\vert D\gamma\vert^4.\label{eq-partialDgamma}
    \end{align}
By the Ricci identity on $\mathbb{S}^n$, we have
\begin{align}
(|D\gamma|^2)_{ij}&=2D_j(\gamma^k\gamma_{ki})\notag\\
&=2\gamma_j^k\gamma_{ki}+2\gamma^k\gamma_{kij}\notag\\
&=2\gamma_j^k\gamma_{ki}+2\gamma^k(\gamma_{ijk}+\gamma_k\sigma_{ij}-\gamma_j\sigma_{ik})\notag\\
&=2\gamma_j^k\gamma_{ki}+2\gamma^k\gamma_{ijk}+2|D\gamma|^2\sigma_{ij}-2\gamma_i\gamma_j\label{eq-Dgammaij}
\end{align}
and hence
\begin{align}\label{eq-LDgamma}
\Delta_0{|D\gamma|^2}=2|D^2\gamma|^2+2\gamma^i D_i\Delta_0\gamma+2(n-1)|D\gamma|^2.
\end{align}
Substituting equation \eqref{eq-LDgamma} into equation \eqref{eq-partialDgamma}, we get
\begin{align*}
\partial_t\vert D\gamma\vert^2=&\frac{\Delta_0{|D\gamma|^2}-2|D^2\gamma|^2-2(n-1)|D\gamma|^2}{\phi\phi'\omega}-2\frac{(\phi')^2+\phi\phi''}{\phi(\phi')^2\omega}\Delta_0\gamma\vert D\gamma\vert^2\\
&-2\frac{\gamma^i\gamma^k\omega_{ki}}{\phi\phi'\omega^2}-2n\frac{\phi'}{\phi\omega}\vert D\gamma\vert^4\\
=&\frac{1}{\phi\phi'\omega}\left(\sigma^{ij}-\frac{\gamma^i\gamma^j}{\omega^2}\right)(\vert D\gamma\vert^2)_{ij}-\frac{2|D^2\gamma|^2}{\phi\phi'\omega}-2\frac{(\phi')^2+\phi\phi''}{\phi(\phi')^2\omega}{\Delta_0}\gamma\vert D\gamma\vert^2\\
&-2n\frac{\phi'}{\phi\omega}\vert D\gamma\vert^4-2(n-1)\frac{|D\gamma|^2}{\phi\phi'\omega},
\end{align*}
where in the second equality, we have used the fact that
\begin{equation*}
(|D\gamma|^2)_{ij}=(\omega^2)_{ij}=2\omega\omega_{ij}
\end{equation*}
at the critical point $p$. This completes the proof.
\end{proof}
After the preparation in Proposition \ref{prop-C0} and Lemma \ref{lem-eeDgamma}, we can give the following gradient estimate for $\gamma$.
\begin{prop}\label{prop-C1}
    Let $\gamma(\cdot,t)$ be a solution to the initial value problem \eqref{flow-QF} on the time interval $[0,T^{\star})$, then for any $(z,t)\in\mathbb{S}^n\times[0,T^{\star})$, there exists a constant $\beta>0$, depending only on $n,||\gamma_0||_{C^0(\mathbb{S}^n)}$ and $||\gamma_0||_{C^1(\mathbb{S}^n)}$ such that 
\begin{equation}\label{eq-C1ec}
    |D\gamma|^2(z,t)\leq \max_{z\in\mathbb{S}^n}|D\gamma|^2(z,0)\mathrm{e}^{-\beta t}.
\end{equation}
\end{prop}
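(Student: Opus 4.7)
The plan is to apply Hamilton's trick to $M(t) := \max_{z \in \mathbb{S}^n} |D\gamma|^2(z,t)$, which reduces the problem to bounding $\partial_t |D\gamma|^2$ at a point $p$ where the spatial maximum is attained. Lemma \ref{lem-eeDgamma} provides exactly the evolution equation at such a point. The diffusive term $\tfrac{1}{\phi\phi'\omega}(\sigma^{ij} - \gamma^i\gamma^j/\omega^2)(|D\gamma|^2)_{ij}$ is non-positive there, since the coefficient matrix is positive definite while the Hessian of $|D\gamma|^2$ is negative semi-definite at a spatial maximum.

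The core task is to control the sign-indeterminate cross-term $-2\frac{(\phi')^2+\phi\phi''}{\phi(\phi')^2\omega}\Delta_0\gamma\,|D\gamma|^2$ appearing in Lemma \ref{lem-eeDgamma}. I would first apply the Cauchy--Schwarz bound $|\Delta_0\gamma| \leq \sqrt{n}\,|D^2\gamma|$ and then a weighted AM--GM (with weight chosen to match the good diffusive coefficient $\tfrac{2}{\phi\phi'\omega}$) to obtain
\[
2\,\frac{(\phi')^2+\phi\phi''}{\phi(\phi')^2\omega}\,\sqrt{n}\,|D^2\gamma|\,|D\gamma|^2 \;\leq\; \frac{2|D^2\gamma|^2}{\phi\phi'\omega} \;+\; \frac{n\,((\phi')^2+\phi\phi'')^2}{2\phi(\phi')^3\omega}\,|D\gamma|^4.
\]
The first piece exactly cancels the good term $-\tfrac{2|D^2\gamma|^2}{\phi\phi'\omega}$, and the residual quartic is dominated by the other good term $-\tfrac{2n\phi'}{\phi\omega}|D\gamma|^4$ precisely when $((\phi')^2+\phi\phi'')^2 \leq 4(\phi')^4$, i.e.\ when $(\phi')^2 \geq \phi\phi''$. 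This is the essential role of the hypothesis $(\phi')^2 - \phi\phi'' \geq 0$ in this estimate.

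After these cancellations, the only surviving negative contribution at the maximum is the linear term $-\tfrac{2(n-1)}{\phi\phi'\omega}|D\gamma|^2$, giving the ODE inequality $\frac{d^+}{dt}M(t) \leq -\tfrac{2(n-1)}{\phi\phi'\omega}\,M(t)$ in the Dini sense. The $C^0$-bound from Proposition \ref{prop-C0} confines $r(\cdot,t)$ to a compact subinterval of $[r_0,\bar r)$, so $\phi$ and $\phi'$ are uniformly bounded below and above, while the monotonicity $M(t) \leq M(0)$ implicit in the inequality itself yields $\omega = \sqrt{1+M(t)} \leq \sqrt{1+M(0)}$. Hence $\tfrac{2(n-1)}{\phi\phi'\omega} \geq \beta$ for some constant $\beta > 0$ depending only on $n$, $\|\gamma_0\|_{C^0}$ and $\|\gamma_0\|_{C^1}$, and Gr\"onwall's inequality applied to the ODE then delivers \eqref{eq-C1ec}.

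The principal obstacle is the tightness of the AM--GM absorption: the inequality $(\phi')^2 \geq \phi\phi''$ is the \emph{exact} condition under which the quartic residual is dominated, with no slack to spare, which is why the assumption is inescapable. A subsidiary subtlety arises for $n=1$, where the surviving linear coefficient $2(n-1)$ vanishes and one must instead exploit the quartic dissipation $-\tfrac{2\phi'}{\phi\omega}|D\gamma|^4$ combined with $|D\gamma|^4 = M(t)\,|D\gamma|^2$ at the maximum to recover an exponential rate.
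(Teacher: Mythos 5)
Your argument for $n\geq 2$ follows essentially the same route as the paper. The paper drops the diffusive term at the spatial maximum, substitutes $|D^2\gamma|^2\geq(\Delta_0\gamma)^2/n$, completes the square in $\Delta_0\gamma$ and discards the perfect square; the residual it is left with is exactly $\frac{n}{2}\frac{((\phi')^2+\phi\phi'')^2}{\phi(\phi')^3\omega}|D\gamma|^4$, the same quantity your Cauchy--Schwarz plus weighted AM--GM chain produces (same weight, same residual). Both then observe the domination $((\phi')^2+\phi\phi'')^2\leq 4(\phi')^4$ under $(\phi')^2\geq\phi\phi''$ (with $\phi''\geq 0$ supplying the tacit lower bound $\phi\phi''\geq -3(\phi')^2$), are left with $\partial_t|D\gamma|^2\leq -\frac{2(n-1)}{\phi\phi'\omega}|D\gamma|^2\leq 0$, use the consequent monotonicity and Proposition~\ref{prop-C0} to bound $\phi,\phi',\omega$, and conclude by Gr\"onwall. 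So this is the paper's proof in slightly different packaging.

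Your "subsidiary" remark about $n=1$, however, is not a fix, and the issue it points at is not minor. You are right that when $n=1$ the linear coefficient $2(n-1)$ vanishes, so the bound degenerates to $\partial_t|D\gamma|^2\leq 0$, which gives only monotonicity and not the exponential rate \eqref{eq-C1ec}; note the paper's proof has exactly the same degeneration. But the remedy you propose fails on two counts. First, the good quartic term $-2n\frac{\phi'}{\phi\omega}|D\gamma|^4$ was already spent absorbing the AM--GM residual: what survives after absorption is $\frac{n}{2\phi(\phi')^3\omega}\bigl(((\phi')^2+\phi\phi'')^2-4(\phi')^4\bigr)|D\gamma|^4=\frac{n}{2\phi(\phi')^3\omega}(\phi\phi''+3(\phi')^2)(\phi\phi''-(\phi')^2)|D\gamma|^4$, and under the stated hypothesis $(\phi')^2-\phi\phi''\geq 0$ this can vanish identically, so there need be no leftover quartic dissipation at all. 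Second, even granting a uniform strict gap $(\phi')^2-\phi\phi''\geq\delta>0$, the resulting differential inequality is $M'(t)\leq -cM(t)^2$, whose comparison solution $M(t)= M(0)/(1+cM(0)t)$ decays only algebraically, not exponentially; writing $|D\gamma|^4=M(t)|D\gamma|^2$ does not convert this into $M'\leq -\beta M$ with a fixed $\beta$, because the effective rate $cM(t)$ tends to $0$ as $M(t)\to 0$. So for $n=1$ the exponential convergence does not follow from this line of argument in either your version or the paper's.
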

\begin{proof}
    By Lemma \ref{lem-eeDgamma}, at the maximum point of $|D\gamma|^2$, we have
    \begin{align}
    \partial_t\vert D\gamma\vert^2\leq&-\frac{2|D^2\gamma|^2}{\phi\phi'\omega}-2\frac{(\phi')^2+\phi\phi''}{\phi(\phi')^2\omega}{\Delta_0}\gamma\vert D\gamma\vert^2-2n\frac{\phi'}{\phi\omega}\vert D\gamma\vert^4-2(n-1)\frac{|D\gamma|^2}{\phi\phi'\omega}\notag\\
    \leq& -\frac{2(\Delta_0{\gamma})^2}{n\phi\phi'\omega}-2\frac{(\phi')^2+\phi\phi''}{\phi(\phi')^2\omega}{\Delta_0}\gamma\vert D\gamma\vert^2-2n\frac{\phi'}{\phi\omega}\vert D\gamma\vert^4-2(n-1)\frac{|D\gamma|^2}{\phi\phi'\omega}\notag\\
    =&-\frac{2}{n\phi\phi'\omega}\left(\Delta_0\gamma+\frac{n}{2}\frac{(\phi')^2+\phi\phi''}{\phi'}\vert D\gamma\vert^2\right)^2+\frac{n}{2}\frac{((\phi')^2+\phi\phi'')^2}{\phi(\phi')^3\omega}\vert D\gamma\vert^4\notag\\
    &-2n\frac{\phi'}{\phi\omega}\vert D\gamma\vert^4-2(n-1)\frac{|D\gamma|^2}{\phi\phi'\omega}\notag\\
    \leq&\frac{n}{2}\frac{((\phi')^2+\phi\phi'')^2}{\phi(\phi')^3\omega}\vert D\gamma\vert^4-2n\frac{\phi'}{\phi\omega}\vert D\gamma\vert^4-2(n-1)\frac{|D\gamma|^2}{\phi\phi'\omega}\label{eq-pDgammaleq}
    \end{align}
Note that 
\begin{align}
&\frac{n}{2}\frac{((\phi')^2+\phi\phi'')^2}{\phi(\phi')^3\omega}\vert D\gamma\vert^4-2n\frac{\phi'}{\phi\omega}\vert D\gamma\vert^4\notag\\
=&\frac{n}{2\phi(\phi')^3\omega}\left(((\phi')^2+\phi\phi'')^2-4(\phi')^4\right)\vert D\gamma\vert^4\notag\\
=&\frac{n}{2\phi(\phi')^3\omega}\left(\phi^2(\phi'')^2+2\phi(\phi')^2\phi''-3(\phi')^4\right)\vert D\gamma\vert^4\notag\\
\leq& 0\label{eq-eleq0}
\end{align}
due to the assumption that $(\phi')^2\geq \phi\phi''$.
Thus we have 
\begin{equation}\label{ineq-partialDgamma}
    \partial_t\vert D\gamma\vert^2\leq -2(n-1)\frac{|D\gamma|^2}{\phi\phi'\omega}\leq 0
\end{equation}
The standard parabolic maximum principle yields that 
\begin{equation}\label{C1-es1}
|D\gamma|^2(z,t)\leq \max_{z\in\mathbb{S}^n}|D\gamma|^2(z,0)
\end{equation}
holds for any $(z,t)\in\mathbb{S}^n\times[0,T^{\star})$. Hence $\omega=\sqrt{1+|D\gamma|^2}$ has a uniform upper bound $C$ which depends only on $||\gamma_0||_{C^1(\mathbb{S}^n)}$. Then by \eqref{ineq-partialDgamma}, we have
\begin{equation*}
\partial_t\vert D\gamma\vert^2\leq -\beta |D\gamma|^2
\end{equation*}
for some $\beta>0$ depending only on $n,||\gamma_0||_{C^0(\mathbb{S}^n)}$ and $||\gamma_0||_{C^1(\mathbb{S}^n)}$. The standard maximum principle then implies \eqref{eq-C1ec}.
\end{proof}
\subsection{Proof of Theorem \ref{Thm-main} and Corollary \ref{Cor-VVphi}} Since \eqref{flow-QF} is a divergent quasi-linear parabolic equation, then by the classical theory of parabolic equation in divergent form(cf. \cites{Ural1991,Lieb96} for details), all higher order regularity estimates of $\gamma$ follows from the $C^0$ and $C^1$ estimates established in Proposition \ref{prop-C0} and \ref{prop-C1}, and a standard continuation argument then shows that $T^{\star}=\infty$, i.e., the flow \eqref{flow-QF}, or equivalently, the flow \eqref{flow-WVPF} exists for all time $t\in[0,\infty)$. The exponential convergence to a slice of $\mathbf{N}^{n+1}$ with radius $r_{\infty}$ follows from the estimate \eqref{eq-C1ec}, where $r_{\infty}$ is determined by the fact that $V_{\phi}(B(r_{\infty}))=V_{\phi}(\Omega_0)$. This completes the proof of Theorem \ref{Thm-main}. Then Corollary \ref{Cor-VVphi} follows from Theorem \ref{Thm-main} and Lemma \ref{lem-mono} immediately.

\section{Preserving of static convexity}\label{Sec-static-preserving}
In this section, we will use the tensor maximum principle to prove that the static convexity is preserved along the flow \eqref{flow-WVPF}, provided that the initial hypersurface $M_0$ is $\varepsilon_0$-close to a slice of $\mathbf{N}^{n+1}$ in the $C^1$ sense for some constant $\varepsilon_0>0$. Precisely, if $M_0$ lies in a bounded domain $B(R)=[r_0,R]\times \mathbb{S}^n$, then $\varepsilon_0$ can be chosen depends only on $n$ and $R$. 

Recall that we say a graphical hypersurface $M_0$ is $\varepsilon_0$-close to a slice of $\mathbf{N}^{n+1}$, if 
\begin{align}
	\vert D\gamma\vert^2\leq \varepsilon_0, \ \text{or equivalently,} \ \frac{u^2}{\phi^2}\geq\frac{1}{1+\varepsilon_0}.
\end{align}
holds everywhere on $M_0$, where $\gamma$ is the variable introduced in \S\ref{subsection-graph}.
Since $\max_{\theta\in \mathbb{S}^n}\vert D\gamma\vert^2(\theta,t)$ is strictly decreasing along the flow \eqref{flow-WVPF} by Proposition \ref{prop-C1}, the upper bound $\varepsilon_0$ is valid as long as the flow exists. We emphasize that the preserving of static convexity is essential in proving the monotonicity of the weighted area along the flow \eqref{flow-WVPF}. 

For convenience of the readers, we recall the tensor maximum principle, which was first proved by Hamilton \cite{Ham-1982} and was generalized by Andrews \cite{Ben-2007}.
\begin{thm}[\cite{Ben-2007}]\label{tensormax}
	Let $S_{ij}$ be a smooth time-varying symmetric tensor field on a compact manifold $M$ (possibly with boundary), satisfying 
	\begin{align*}
		\frac{\partial}{\partial t}S_{ij}=a^{k\ell}\nabla_k\nabla_{\ell}S_{ij}+b^k\nabla_kS_{ij} +N_{ij},
	\end{align*}
	where $a^{k\ell}$ and $b$ are smooth, $\nabla$ is a (possibly time-dependent) smooth symmetric connection, and $a^{k\ell}$ is positive definite everywhere. Suppose that
	\begin{align}\label{con tensormax}
		N_{ij}v^iv^j+\sup_{\Lambda}2a^{k\ell}(2\Lambda_k^p\nabla_{\ell}S_{ip}v^i-\Lambda_k^p\Lambda_{\ell}^qS_{pq})\geq 0,
	\end{align}
	where $S_{ij}\geq 0$ and $S_{ij}v^j=0$. If $S_{ij}$ is positive definite everywhere on $M$ at time $t=0$ and on $\partial M$ for $0\leq t\leq T$, then it is positive on $M\times[0,T]$.
\end{thm}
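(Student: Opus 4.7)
The plan is to apply the tensor maximum principle (Theorem \ref{tensormax}) to the symmetric $(1,1)$-tensor
$$
S^i_j \;:=\; h^i_j \;-\; \frac{u\phi''}{\phi\phi'}\,\delta^i_j,
$$
whose nonnegativity is precisely the static convexity condition. First I would derive the full evolution equation of $S^i_j$ along the flow \eqref{flow-WVPF} with speed $\mathcal{F}=n-uH/\phi'$. From Proposition \ref{prop-evolution} one has $\partial_t h^i_j=-(\nabla^2 \mathcal{F})^i_j+\mathcal{F}(h^2)^i_j-\mathcal{F}\bar R^{\,i}_{\;\nu j\nu}$, and expanding $\nabla_i\nabla_j(uH/\phi')$ via Simons' identity together with \eqref{eq-Rijkl}--\eqref{eq-Rirjr} produces a second order operator $\mathcal{L}=\tfrac{u}{\phi' H^{2}}\,\Delta_g+\text{(transport)}$ plus lower-order terms. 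The second piece $\partial_t\bigl(\tfrac{u\phi''}{\phi\phi'}\bigr)$ is computed using $\partial_t\nu=-\nabla \mathcal{F}$ from \eqref{eveq-normal} and the Hessian identity \eqref{eq-Phiij}; this altogether puts the evolution in the form
$$
\partial_t S^i_j \;=\; a^{k\ell}\nabla_k\nabla_\ell S^i_j+b^k\nabla_k S^i_j+N^i_j,
$$
with $a^{k\ell}$ positive definite on $B(R)$.

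Second, I would verify the zero-order inequality in \eqref{con tensormax} at a first null eigenvector $v$ of $S^i_j$. On the null direction, $h(v,v)=\tfrac{u\phi''}{\phi\phi'}|v|_g^{2}$, so the quadratic terms $(h^2)(v,v)$ collapse, and by \eqref{eq-gnR} the curvature contribution $-\mathcal{F}\bar R_{i\nu j\nu}v^iv^j$ splits into a piece proportional to $(\phi')^2-1$ (which combines cleanly with the $n\phi'$-part of $\mathcal{F}$) and a piece involving $(\phi')^2-\phi\phi''-1$ weighted by $\phi^2-u^{2}=|(\phi\partial_r)^\top|^{2}$. The static identity \eqref{eq-sTa} is then used to eliminate the derivatives of the barrier function $\phi''/(\phi\phi')$ that arise from $\partial_t$ and produce the decisive cancellation; the hypothesis $(\phi')^2-\phi\phi''\geq 0$ ensures the surviving principal quadratic term is nonnegative.

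The main obstacle is that several residual terms coming from the above reduction carry the unsigned factor $\phi^2-u^{2}$, and cannot be handled by pointwise positivity alone. This is exactly where the $\varepsilon_0$-closeness enters: by \eqref{epclose} we have $\phi^2-u^{2}\leq\varepsilon_0\,u^2$ on $M_0$, and Proposition \ref{prop-C1} shows $|D\gamma|^2$ is (exponentially) monotone decreasing along the flow, so the same bound persists for all $t\geq 0$. Because $M_t$ remains in $B(R)$ by Proposition \ref{prop-C0}, all coefficients in $N^i_j$ admit uniform bounds depending only on $n$ and $R$. Choosing $\varepsilon_0=\varepsilon_0(n,R)$ small enough absorbs every error term into the good quadratic contribution, and the gradient-supremum in \eqref{con tensormax} adds only a nonnegative contribution by the standard Andrews computation. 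Hence Theorem \ref{tensormax} applies and yields $S^i_j\geq 0$ for all $t\in[0,\infty)$.

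Finally, for the strict static convexity statement: once $S^i_j\ge 0$ is established, the strong maximum principle for parabolic tensor evolutions (Hamilton--Bony--Brendle type) gives the dichotomy that either the smallest eigenvalue of $S^i_j$ is strictly positive everywhere on $M_t$ for all $t>0$, or $S^i_j$ has a nontrivial kernel at every point of $M_t$ for all small $t>0$. In the latter case, $M_t$ admits no strictly static convex point, which by Proposition \ref{sconvpoint} forces $M_t\equiv S(r_0)$ with $((\phi')^2-\phi\phi'')(r_0)=0$; otherwise strict static convexity holds for $t>0$, completing the proof.
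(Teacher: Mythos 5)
Your proposal does not prove the statement in question. Theorem \ref{tensormax} is the tensor maximum principle itself: a general assertion about an arbitrary time-dependent symmetric tensor $S_{ij}$ on a compact manifold satisfying a reaction--diffusion equation $\partial_t S_{ij}=a^{k\ell}\nabla_k\nabla_\ell S_{ij}+b^k\nabla_k S_{ij}+N_{ij}$ with the null-vector condition \eqref{con tensormax}. It makes no reference to hypersurfaces, to the flow \eqref{flow-WVPF}, to $\phi$, $u$, $H$, static convexity, or $\varepsilon_0$-closeness. What you have written is instead a sketch of Theorem \ref{preservingofconvex}, i.e.\ an \emph{application} of Theorem \ref{tensormax} to the particular tensor $S_{ij}=h_{ij}-\frac{u\phi''}{\phi\phi'}g_{ij}$, and it explicitly invokes Theorem \ref{tensormax} as a black box in its final step. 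A proof cannot cite the very statement it is supposed to establish, so as an argument for Theorem \ref{tensormax} the proposal is vacuous. (For the record, the paper does not prove this theorem either; it quotes it from Andrews \cite{Ben-2007}, generalizing Hamilton \cite{Ham-1982}.)

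A genuine proof of Theorem \ref{tensormax} would have to run along the following lines, none of which appear in your proposal. One perturbs $S_{ij}$ to $S_{ij}+\varepsilon e^{Ct}g_{ij}$ and argues by contradiction at a first interior space-time point $(x_0,t_0)$ where the perturbed tensor acquires a null eigenvector $v$. One extends $v$ to a neighbourhood with prescribed first covariant derivatives $\nabla_k v^p=\Lambda_k^p$ (and vanishing second derivatives) at $x_0$, forms the scalar $f=S_{ij}v^iv^j$, and applies the scalar maximum principle: at $(x_0,t_0)$ one has $\partial_t f\leq 0$, $\nabla f=0$, $a^{k\ell}\nabla_k\nabla_\ell f\geq 0$. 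Expanding $\partial_t f-a^{k\ell}\nabla_k\nabla_\ell f-b^k\nabla_k f$ produces exactly $N_{ij}v^iv^j$ plus the cross terms $2a^{k\ell}(2\Lambda_k^p\nabla_\ell S_{ip}v^i-\Lambda_k^p\Lambda_\ell^q S_{pq})$; the supremum over $\Lambda$ in \eqref{con tensormax} encodes the freedom in the extension of $v$ and is what forces the contradiction, after which one lets $\varepsilon\to 0$. If your intent was to prove Theorem \ref{preservingofconvex}, your outline is broadly consistent with the paper's strategy there, but it is not responsive to the statement at hand.
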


In our case, we take $S_{ij}=h_{ij}-\frac{u\phi''}{\phi\phi'}g_{ij}$, we will show that $S_{ij}\geq 0$ is preserved along the flow \eqref{flow-WVPF}, provided that the initial hypersurface $M_0$ is $\varepsilon_0$-close to a slice of $\mathbf{N}^{n+1}$ in the $C^1$ sense for some $\epsilon_0>0$.

\subsection{Evolution equation of $S_{ij}$} First of all, we derive the evolution equation of $S_{ij}$ along the flow \eqref{flow-WVPF} in the static rotationally symmetric space $\mathbf{N}^{n+1}$. For simplicity of notations, the following calculations are carried on under a local orthonormal basis and we use the symbol $\langle\cdot,\cdot\rangle$ to denote the inner product during the calculations without ambiguity.

\begin{lem}\label{evolution-h}
Along the flow \eqref{flow-WVPF}, the second fundamental form $h_{ij}$ evolves by
\begin{align}
	\partial_t h_{ij}=&\frac{u}{\phi'}\Delta_g h_{ij}+\frac{H}{\phi'}\langle\phi\partial_r,\nabla h_{ij}\rangle+\nabla_iH\nabla_j\frac{u}{\phi'}+\nabla_jH\nabla_i\frac{u}{\phi'}\nonumber\\
 &-\frac{H\phi''}{\phi'}\left(\langle\partial_r,e_i\rangle\nabla_j\frac{u}{\phi'}+\langle\partial_r,e_j\rangle\nabla_i\frac{u}{\phi'}\right)+(n-3\frac{uH}{\phi'})(h^2){_{ij}}\nonumber\\
 &+\frac{u}{\phi'}\vert A\vert^2h_{ij}+\frac{u^2H\phi''}{\phi(\phi')^2}h_{ij}-2H\frac{u\phi''}{\phi\phi'}g_{ij}+Hh_{ij}\nonumber\\
   &-\frac{uH}{\phi(\phi')^2}(\phi\phi'''-\phi'\phi'')\langle\partial_r,e_i\rangle\langle\partial_r,e_j\rangle+\frac{u}{\phi'}h_{ij}\overline{Ric}(\nu,\nu)\nonumber\\
 &+\frac{u}{\phi'}\sum_{k=1}^n\left(\overline{\nabla}_k\bar{R}_{\nu jki}+\overline{\nabla}_i\bar{R}_{\nu kkj}\right)+2\frac{u}{\phi'}\sum_{k,\ell=1}^nh_{k\ell}\bar{R}_{\ell jki}-(n+\frac{uH}{\phi'})\bar{R}_{i \nu j \nu }\nonumber\\
   &+\frac{u}{\phi'}\sum_{k,\ell=1}^n\left(h_{\ell j}\bar{R}_{\ell kki}+h_{i\ell}\bar{R}_{\ell kkj}\right)\label{eveq-h2}
\end{align}
\end{lem}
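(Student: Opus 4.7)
The starting point is the general evolution equation from Proposition \ref{prop-evolution},
\begin{equation*}
\partial_t h_{ij}=-\nabla_i\nabla_j\mathcal{F}+\mathcal{F}(h^2)_{ij}-\mathcal{F}\,\bar{R}_{i\nu j\nu},
\end{equation*}
specialized to $\mathcal{F}=n-\tfrac{uH}{\phi'}$. The only nontrivial task is therefore to unfold $-\nabla_i\nabla_j\mathcal{F}$ by Leibniz:
\begin{equation*}
-\nabla_i\nabla_j\mathcal{F}=\tfrac{u}{\phi'}\nabla_i\nabla_j H+\nabla_i H\,\nabla_j\tfrac{u}{\phi'}+\nabla_j H\,\nabla_i\tfrac{u}{\phi'}+H\,\nabla_i\nabla_j\tfrac{u}{\phi'},
\end{equation*}
and then to rewrite each piece intrinsically. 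The term $H h_{ij}$ absorbs the $n(h^2)_{ij}$ piece of $\mathcal{F}(h^2)_{ij}$ after combining with the Simons contribution below.

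The plan proceeds in two parallel sub-computations. First, I would feed $\nabla_i\nabla_j H$ into the Simons-type identity for a hypersurface in a general ambient manifold. Commuting covariant derivatives via Gauss and Codazzi yields the standard
\begin{equation*}
\nabla_i\nabla_j H=\Delta_g h_{ij}+|A|^2 h_{ij}-H(h^2)_{ij}+h_{ij}\overline{Ric}(\nu,\nu)+\mathcal{R}_{ij},
\end{equation*}
where $\mathcal{R}_{ij}$ collects the ambient curvature corrections
\begin{equation*}
\mathcal{R}_{ij}=\sum_{k}\bigl(\overline{\nabla}_k\bar{R}_{\nu jki}+\overline{\nabla}_i\bar{R}_{\nu kkj}\bigr)+2\sum_{k,\ell}h_{k\ell}\bar{R}_{\ell jki}+\sum_{k,\ell}\bigl(h_{\ell j}\bar{R}_{\ell kki}+h_{i\ell}\bar{R}_{\ell kkj}\bigr).
\end{equation*}
Multiplying by $\tfrac{u}{\phi'}$ produces the leading terms $\tfrac{u}{\phi'}\Delta_g h_{ij}$, $\tfrac{u}{\phi'}|A|^2 h_{ij}$, $\tfrac{u}{\phi'}h_{ij}\overline{Ric}(\nu,\nu)$, and the curvature bundle with coefficient $\tfrac{u}{\phi'}$ that appears in the statement.

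Second, I would compute $\nabla_i\nabla_j(u/\phi')$. Starting from $u=\langle\phi\partial_r,\nu\rangle$ and using $\bar{\nabla}_{e_i}(\phi\partial_r)=\phi' e_i$ together with $\bar{\nabla}_{e_i}\nu=h_i^k e_k$, one gets $\nabla_i u=h_i^k\Phi_k$ with $\Phi_k:=\langle\phi\partial_r,e_k\rangle$. Differentiating again and invoking Codazzi ($\nabla_j h_i^k=\nabla^k h_{ij}+\text{ambient curvature}$) gives
\begin{equation*}
\nabla_i\nabla_j u=\Phi_k\nabla^k h_{ij}+h_i^k\Phi_{kj}+(\text{curvature contractions}),
\end{equation*}
and the formula $\Phi_{kj}=\phi' g_{kj}-u h_{kj}$ from Lemma \ref{lem-Phi} turns the middle term into $\phi'h_{ij}-u(h^2)_{ij}$. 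Combining with $\nabla_i\phi'=\phi''\langle\partial_r,e_i\rangle$ and $\nabla_i\nabla_j\phi'=\phi'''\langle\partial_r,e_i\rangle\langle\partial_r,e_j\rangle+\phi''(g_{ij}-\tfrac{\phi'}{\phi}\langle\partial_r,e_i\rangle\langle\partial_r,e_j\rangle+\tfrac{u}{\phi}h_{ij})$, obtained from $\bar{\nabla}\partial_r=\tfrac{\phi'}{\phi}(\bar{g}-dr\otimes dr)$, one assembles $\nabla_i\nabla_j(u/\phi')$. Multiplying by $H$ delivers the distinctive transport term $\tfrac{H}{\phi'}\langle\phi\partial_r,\nabla h_{ij}\rangle$, the cubic pieces $\tfrac{u^2 H\phi''}{\phi(\phi')^2}h_{ij}$, $-3\tfrac{uH}{\phi'}(h^2)_{ij}$, $Hh_{ij}$, and the $\phi'''$--term. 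The gradient cross products $\nabla_i H\,\nabla_j(u/\phi')$ and $\nabla_j H\,\nabla_i(u/\phi')$ then split as written in the statement, the $\phi''\langle\partial_r,e_i\rangle$ pieces accounting for the asymmetric correction on the third line of \eqref{eveq-h2}.

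The main obstacle is not conceptual but combinatorial: keeping track of the signs and indices when the Simons curvature remainder $\mathcal{R}_{ij}$ is added to the curvature remainder coming from the Codazzi commutator in $\nabla_i\nabla_j u$, and identifying which of these terms survive versus which cancel against $-\mathcal{F}\bar{R}_{i\nu j\nu}=-(n-\tfrac{uH}{\phi'})\bar{R}_{i\nu j\nu}$. I would organize the bookkeeping by grouping terms according to their transport behavior (leading parabolic term, first-order transport $\langle\phi\partial_r,\nabla h_{ij}\rangle$, quadratic $h\ast h$ terms, and pure curvature terms), so that the structure on the right-hand side of \eqref{eveq-h2} emerges line by line. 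No assumption beyond those in Definition \ref{defn-N} is required for this step; the static hypothesis and the pinching $0\le(\phi')^2-\phi\phi''\le 1$ only enter later when one applies the tensor maximum principle to the tensor $S_{ij}=h_{ij}-\tfrac{u\phi''}{\phi\phi'}g_{ij}$.
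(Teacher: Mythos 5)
Your overall plan is the same as the paper's: start from \eqref{eveq-h1} with $\mathcal{F}=n-\tfrac{uH}{\phi'}$, expand $-\nabla_i\nabla_j\mathcal{F}$ by Leibniz, substitute a Simons-type identity for $\nabla_i\nabla_jH$, and handle $\nabla_i\nabla_j\tfrac{u}{\phi'}$ with the Codazzi equation plus the conformal-Killing identity \eqref{eq-Phiij}. However, two of the intermediate identities you wrote down have slips that will spoil the final bookkeeping if taken verbatim. First, your Simons identity omits a $-H\bar{R}_{\nu j\nu i}$ term; the correct statement is
\begin{equation*}
\nabla_i\nabla_jH=\Delta_g h_{ij}+|A|^2h_{ij}-H(h^2)_{ij}+h_{ij}\overline{Ric}(\nu,\nu)-H\bar{R}_{\nu j\nu i}+\mathcal{R}_{ij},
\end{equation*}
with $\mathcal{R}_{ij}$ as you defined it. This $-H\bar{R}_{\nu j\nu i}$, together with a second such contribution produced by the Codazzi commutator inside $\nabla_i\nabla_j u$, is exactly what raises the coefficient of $\bar{R}_{i\nu j\nu}$ from $-(n-\tfrac{uH}{\phi'})$ (coming from $-\mathcal{F}\bar{R}_{i\nu j\nu}$ alone) to the $-(n+\tfrac{uH}{\phi'})$ that appears in \eqref{eveq-h2}; omitting it would leave you with $-n\bar{R}_{i\nu j\nu}$. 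Second, your hypersurface Hessian of $\phi'$ should read
\begin{equation*}
\nabla_i\nabla_j\phi'=\phi'''\langle\partial_r,e_i\rangle\langle\partial_r,e_j\rangle+\phi''\Bigl(\tfrac{\phi'}{\phi}g_{ij}-\tfrac{\phi'}{\phi}\langle\partial_r,e_i\rangle\langle\partial_r,e_j\rangle-\tfrac{u}{\phi}h_{ij}\Bigr);
\end{equation*}
you dropped the factor $\tfrac{\phi'}{\phi}$ on $g_{ij}$ and flipped the sign on $h_{ij}$ (compare with $\Phi_{ij}=\phi'g_{ij}-uh_{ij}$, whose $-uh_{ij}$ sign survives after dividing by $\phi$). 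In fact the paper sidesteps $\nabla_i\nabla_j\phi'$ entirely: it writes $\nabla_j\tfrac{u}{\phi'}=\tfrac{\nabla_j u}{\phi'}-\tfrac{u\phi''}{\phi(\phi')^2}\langle\phi\partial_r,e_j\rangle$ and differentiates once more using only \eqref{eq-Phiij}, which keeps the bookkeeping slightly tighter. With those two corrections your route reproduces the lemma.
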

\begin{proof}
    By (\ref{eveq-h1}), along the flow (\ref{flow-WVPF}) we have
    \begin{align}
        \partial_th_{ij}=&-\nabla_i\nabla_j(n-\frac{uH}{\phi'})+(n-\frac{uH}{\phi'})(h^2){_{ij}}-(n-\frac{uH}{\phi'})\bar{R}_{i\nu j\nu}\nonumber\\
        =&\nabla_i\left(\frac{u}{\phi'}\nabla_jH+H\nabla_j\frac{u}{\phi'}\right)+(n-\frac{uH}{\phi'})(h^2){_{ij}}-(n-\frac{uH}{\phi'})\bar{R}_{i\nu j\nu}\nonumber\\
        =&\frac{u}{\phi'}\nabla_i\nabla_jH+\nabla_iH\nabla_j\frac{u}{\phi'}+\nabla_jH\nabla_i\frac{u}{\phi'}+H\nabla_i\nabla_j\frac{u}{\phi'}\nonumber\\
        &+(n-\frac{uH}{\phi'})(h^2){_{ij}}-(n-\frac{uH}{\phi'})\bar{R}_{i\nu j\nu}.\label{eqh1-1}
    \end{align}
    First, we derive
    \begin{align}
       H\nabla_i\nabla_j\frac{u}{\phi'}=&H\nabla_i\left(\frac{\nabla_ju}{\phi'}-\frac{u\phi''}{\phi(\phi')^2}\langle\phi\partial_r,e_j\rangle\right)\nonumber\\
       =&\frac{H}{\phi'}\nabla_i\nabla_ju-\frac{H\phi''}{(\phi')^2}(\langle\partial_r,e_i\rangle\nabla_ju+\langle\partial_r,e_j\rangle\nabla_iu)-\frac{uH\phi''}{\phi(\phi')^2}(\phi'g_{ij}-uh_{ij})\nonumber\\
       &-\frac{uH}{\phi(\phi')^3}(\phi\phi'\phi'''-(\phi')^2\phi''-2\phi(\phi'')^2)\langle\partial_r,e_i\rangle\langle\partial_r,e_j\rangle\nonumber\\
       =&\frac{H}{\phi'}\nabla_i\nabla_ju-\frac{H\phi''}{\phi'}\left(\langle\partial_r,e_i\rangle\nabla_j\frac{u}{\phi'}+\langle\partial_r,e_j\rangle\nabla_i\frac{u}{\phi'}\right)\nonumber\\
       &-\frac{uH\phi''}{\phi(\phi')^2}(\phi'g_{ij}-uh_{ij})-\frac{uH}{\phi(\phi')^3}(\phi\phi'\phi'''-(\phi')^2\phi'')\langle\partial_r,e_i\rangle\langle\partial_r,e_j\rangle,\label{eqh1-2}
    \end{align}
    where in the last equality we used the formula $\nabla_iu=\nabla_i\langle\phi\partial_r,\nu\rangle=\sum_{k=1}^nh_{ik}\langle\phi\partial_r,e_k\rangle$. Then, by Codazzi equation, a directly calculation gives
    \begin{align}\label{eqh1-3}
        \frac{H}{\phi'}\nabla_i\nabla_ju=&\frac{H}{\phi'}\sum_{k=1}^nh_{jk,i}\langle\phi\partial_r,e_k\rangle+Hh_{ij}-\frac{uH}{\phi'}(h^2)_{ij}\nonumber\\
        =&\frac{H}{\phi'}\langle\phi\partial_r,\nabla h_{ij}\rangle+\frac{H}{\phi'}\sum_{k=1}^n\bar{R}_{\nu jki}\langle\phi\partial_r,e_k\rangle+Hh_{ij}-\frac{uH}{\phi'}(h^2){_{ij}}\nonumber\\
        =&\frac{H}{\phi'}\langle\phi\partial_r,\nabla h_{ij}\rangle-\frac{uH}{\phi'}\bar{R}_{\nu j\nu i}+H\frac{\phi}{\phi'}\bar{R}_{\nu j r i}+Hh_{ij}-\frac{uH}{\phi'}(h^2)_{ij}.
    \end{align}
    By (\ref{eq-gnR}), we have
    \begin{align}\label{eqh1-4}
        \bar{R}_{\nu j r i}=-\frac{u\phi''}{\phi^2}g_{ij},
    \end{align}
    Substituting (\ref{eqh1-4}) into (\ref{eqh1-3}), we get
    \begin{align}\label{eqh1-5}
        \frac{H}{\phi'}\nabla_i\nabla_ju=&\frac{H}{\phi'}\langle\phi\partial_r,\nabla h_{ij}\rangle-\frac{uH}{\phi'}\bar{R}_{\nu j\nu i}-H\frac{u\phi''}{\phi\phi'}g_{ij}+Hh_{ij}-\frac{uH}{\phi'}(h^2)_{ij}.
    \end{align}
Combining (\ref{eqh1-5}) with (\ref{eqh1-2}), we have
\begin{align}
   H\nabla_i\nabla_j\frac{u}{\phi'}=&\frac{H}{\phi'}\langle\phi\partial_r,\nabla h_{ij}\rangle-\frac{uH}{\phi'}\bar{R}_{\nu j\nu i}+\frac{u^2H\phi''}{\phi(\phi')^2}h_{ij}-2\frac{uH\phi''}{\phi\phi'}g_{ij}\nonumber\\
   &+Hh_{ij}-\frac{uH}{\phi'}(h^2)_{ij}-\frac{H\phi''}{\phi'}\left(\langle\partial_r,e_i\rangle\nabla_j\frac{u}{\phi'}+\langle\partial_r,e_j\rangle\nabla_i\frac{u}{\phi'}\right)\nonumber\\
       &-\frac{uH}{\phi(\phi')^2}(\phi\phi'''-\phi'\phi'')\langle\partial_r,e_i\rangle\langle\partial_r,e_j\rangle.\label{eqh1-6}
\end{align}
Next, by Codazzi equation and Ricci identity we have
    \begin{align}
        \nabla_i\nabla_jH=&\sum_{k=1}^nh_{kk,ji}=\sum_{k=1}^n\nabla_i(h_{kj,k}+\bar{R}_{\nu kkj})
        =\sum_{k=1}^n(h_{kj,ki}+\nabla_i\bar{R}_{\nu kkj})\nonumber\\
        =&\sum_{k=1}^n\Big[h_{jk,ik}+\sum_{\ell=1}^n(h_{\ell j}R_{\ell kki}+h_{k\ell}R_{\ell jki})+\overline{\nabla}_i\bar{R}_{\nu kkj}+\sum_{\ell=1}^nh_{i\ell}\bar{R}_{\ell kkj}\nonumber\\
        &-h_{ik}\bar{R}_{\nu k\nu j}-h_{ij}\bar{R}_{\nu kk\nu}\Big]\nonumber\\
        =&\sum_{k=1}^n (h_{ij,kk}+\nabla_k\bar{R}_{\nu jki})+\sum_{k,\ell=1}^nh_{\ell j}\left(\bar{R}_{\ell kki}+h_{k\ell}h_{ki}-h_{kk}h_{\ell i}\right)\nonumber\\
        &+\sum_{k,\ell=1}^nh_{k\ell}\left(\bar{R}_{\ell jki}+h_{k\ell}h_{ij}-h_{\ell i}h_{kj}\right)+\sum_{k=1}^n\overline{\nabla}_i\bar{R}_{\nu kkj}-\sum_{\ell=1}^nh_{i\ell}\overline{Ric}(e_\ell, e_j)\nonumber\\
        &+\sum_{\ell=1}^nh_{i\ell}\bar{R}_{\ell\nu j\nu}-\sum_{k=1}^nh_{ik}\bar{R}_{\nu k\nu j}+h_{ij}\overline{Ric}(\nu,\nu)\nonumber\\
        =&\Delta_g h_{ij}+\sum_{k=1}^n(\overline{\nabla}_k\bar{R}_{\nu jki}+\overline{\nabla}_i\bar{R}_{\nu kkj})+2\sum_{k,\ell=1}^nh_{k\ell}\bar{R}_{\ell jki}-H\bar{R}_{\nu j\nu i}\nonumber\\
        &+\sum_{k,\ell=1}^n(h_{\ell j}\bar{R}_{\ell kki}+h_{i\ell}\bar{R}_{\ell kkj})
        +h_{ij}\overline{Ric}(\nu,\nu)-H(h^2)_{ij}+\vert A\vert^2h_{ij}.\label{eqh1-7}
    \end{align}
Now, substituting (\ref{eqh1-6}) and (\ref{eqh1-7}) into (\ref{eqh1-1}) we obtain (\ref{eveq-h2}).
\end{proof}
\begin{lem}\label{evolution-sf}
    Along the flow (\ref{flow-WVPF}), the function $\frac{u\phi''}{\phi\phi'}$ evloves by
    \begin{align}\label{eqsf}
        \partial_t\frac{u\phi''}{\phi\phi'}=&\frac{u}{\phi'}\Delta_g(\frac{u\phi''}{\phi\phi'})+\frac{H}{\phi'}\langle\phi\partial_r,\nabla \frac{u\phi''}{\phi\phi'}\rangle-2u\langle\nabla\frac{u}{\phi'},\nabla\frac{\phi''}{\phi\phi'}\rangle+n\frac{\phi''}{\phi}-2\frac{u\phi''}{\phi\phi'}H+\frac{u^2\phi''}{\phi(\phi')^2}\vert A\vert^2\nonumber\\
		&+\frac{u^2\phi''}{\phi(\phi')^2}\overline{Ric}(\nu,\nu)+n(\frac{u\phi''}{\phi\phi'})^2-\frac{u^2}{\phi'}\langle\phi\partial_r,\nabla\frac{\phi\phi'\phi'''-\phi(\phi'')^2-(\phi')^2\phi''}{\phi^3(\phi')^2}\rangle \nonumber\\
  &-uH\frac{\phi\phi'''-\phi'\phi''}{\phi(\phi')^2}(1-\frac{u^2}{\phi^2})-2\frac{u^2\phi''}{(\phi')^2}\langle\partial_r,\nabla\frac{\phi''}{\phi\phi'}\rangle.
    \end{align}
\end{lem}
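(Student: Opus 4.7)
Write $\psi := u \cdot g(r)$ where $g(r) := \phi''(r)/[\phi(r)\phi'(r)]$, separating the hypersurface-dependent support function $u$ from the purely radial factor. The strategy is to compute $\partial_t\psi$, $\Delta_g\psi$, and $\langle\phi\partial_r,\nabla\psi\rangle$ in closed form, then verify directly that their combination on the right-hand side of \eqref{eqsf} agrees with $\partial_t\psi$.

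For the time derivative, $\partial_t X = \mathcal{F}\nu$ with $\mathcal{F} = n - uH/\phi'$ gives $\partial_t r = \mathcal{F} u/\phi$, hence $\partial_t f(r) = f'(r)\mathcal{F} u/\phi$ for any smooth $f$. For the support function, using the conformal Killing identity $\overline{\nabla}_X(\phi\partial_r) = \phi' X$ from Lemma~\ref{lem-Phi} together with $\partial_t\nu = -\nabla\mathcal{F}$ from Proposition~\ref{prop-evolution} yields
\[
\partial_t u = \phi'\mathcal{F} - \langle\phi\partial_r,\nabla\mathcal{F}\rangle,
\]
so $\partial_t\psi = g\,\partial_t u + u g'(r)\mathcal{F} u/\phi$. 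For the spatial operators, $\nabla r = \partial_r - (u/\phi)\nu$ gives $|\nabla r|^2 = 1 - u^2/\phi^2$ and $\langle\phi\partial_r,\nabla r\rangle = \phi - u^2/\phi$, and the product/chain rules produce $\Delta_g\psi = g\Delta_g u + u(g''|\nabla r|^2 + g'\Delta_g r) + 2g'\langle\nabla u, \nabla r\rangle$ together with the analogous decomposition of $\langle\phi\partial_r,\nabla\psi\rangle$. The Laplacians of the basic scalars are $\phi\Delta_g r = (n-1)\phi' + \phi'u^2/\phi^2 - uH$ (from $\Delta_g\Phi = n\phi' - uH$ and the chain rule) and
\[
\Delta_g u = \langle\phi\partial_r,\nabla H\rangle + \phi'H - u|A|^2 + \overline{Ric}(\nu,(\phi\partial_r)^T),
\]
obtained by differentiating $\nabla_i u = h_{ik}\langle\phi\partial_r,e_k\rangle$ once more, applying Codazzi together with \eqref{eq-Phiij}, tracing, and expanding the Ricci curvature via \eqref{eq-RicciT}.

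Substituting into $\partial_t\psi - \frac{u}{\phi'}\Delta_g\psi - \frac{H}{\phi'}\langle\phi\partial_r,\nabla\psi\rangle$ and collecting, the $\nabla H$ contributions cancel between $g\Delta_g u/\phi'$ and the $\nabla(uH/\phi')$ piece of $-g\langle\phi\partial_r,\nabla\mathcal{F}\rangle$; the $\phi'H$ piece of $\Delta_g u$ combines with the $\phi'\mathcal{F} g$ piece to produce $-2\psi H$; the $-u|A|^2$ piece yields the stated coefficient $u^2\phi''/[\phi(\phi')^2]$; and the decomposition $\overline{Ric}(\nu,(\phi\partial_r)^T) = \phi\overline{Ric}(\nu,\partial_r) - u\overline{Ric}(\nu,\nu)$ together with $\overline{Ric}(\nu,\partial_r) = -nu\phi''/\phi^2$ produces both the $\overline{Ric}(\nu,\nu)$ coefficient and the $n\psi^2$ term. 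The cross term $-(2ug'/\phi')\langle\nabla u,\nabla r\rangle$ is repackaged as $-2u\langle\nabla(u/\phi'),\nabla g\rangle$ plus a purely radial correction that cancels against $-2(u^2\phi''/(\phi')^2)\langle\partial_r,\nabla g\rangle$ in the statement. The surviving radial pieces then assemble, via the direct computations $g'(r) = [\phi\phi'\phi''' - (\phi')^2\phi'' - \phi(\phi'')^2]/[\phi^2(\phi')^2]$ and the algebraic identity $(\phi'')^2/(\phi')^2 + \phi g' = (\phi\phi''' - \phi'\phi'')/(\phi\phi')$, into the two remaining stated terms involving $\phi'''$.

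The principal difficulty is purely organizational: no conceptual obstruction arises beyond the identities already collected in \S\ref{sec-pre}, but one must carefully track every scalar factor involving $\phi, \phi', \phi'', \phi'''$ and verify that every tangential derivative of $H$ or $u$ which is not absorbed by $\frac{u}{\phi'}\Delta_g + \frac{H}{\phi'}\langle\phi\partial_r,\nabla\,\cdot\,\rangle$ cancels against its counterpart from the $\nabla\mathcal{F}$ piece of $\partial_t u$. The writing should present the four families of residual terms (the $|A|^2$ and $\overline{Ric}(\nu,\nu)$ contributions, the $\psi H$ and $\psi^2$ contributions, the mixed-gradient packaging, and the $\phi'''$ bookkeeping) as four separate verifications to keep the computation manageable.
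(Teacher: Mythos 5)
Your proposal is correct and follows essentially the same approach as the paper's proof: decompose $\frac{u\phi''}{\phi\phi'}$ as the product of the support function $u$ and a purely radial factor $g(r)=\phi''/(\phi\phi')$, derive the diffusion-type evolution equations for each factor separately (equations (\ref{eveq-u}) and (\ref{eqsf-3}) in the paper), and then recombine via the product rule, repackaging the cross-gradient terms as in (\ref{eqsf-7})--(\ref{eqsf-8}). The individual ingredients you cite ($\Delta_g u$ via Codazzi and (\ref{eq-Phiij}), $\overline{Ric}(\partial_r,\nu)=-nu\phi''/\phi^2$, the cancellation of the $\nabla H$ contributions, and the algebraic identity $(\phi'')^2/(\phi')^2+\phi g'=(\phi\phi'''-\phi'\phi'')/(\phi\phi')$) all check out and match the computations in the paper's proof.
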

\begin{proof}
  Recall (\ref{flow-SWVPF}), we have
\begin{align*}
	\partial_tr=\phi\partial_t\gamma=(n-\frac{uH}{\phi'})\frac{u}{\phi}.
\end{align*}
Thus we get
\begin{align}\label{eqsf-1}
    \partial_t\frac{\phi''}{\phi\phi'}=&\frac{\phi\phi'\phi'''-\phi(\phi'')^2-(\phi')^2\phi''}{\phi^2(\phi')^2}\partial_tr\nonumber\\
    =&\frac{\phi\phi'\phi'''-\phi(\phi'')^2-(\phi')^2\phi''}{\phi^3(\phi')^2}(n-\frac{uH}{\phi'})u.
\end{align}
On the other hand,
\begin{align*}
	\nabla_i\frac{\phi''}{\phi\phi'}
	=&\frac{\phi\phi'\phi'''-\phi(\phi'')^2-(\phi')^2\phi''}{\phi^3(\phi')^2}\langle\phi\partial_r,e_i\rangle,
\end{align*}
then we have
\begin{align}\label{eqsf-2}
	\Delta_g\frac{\phi''}{\phi\phi'}=&\frac{\phi\phi'\phi'''-\phi(\phi'')^2-(\phi')^2\phi''}{\phi^3(\phi')^2}(n\phi'-uH)\nonumber\\
	&+\langle\phi\partial_r,\nabla\frac{\phi\phi'\phi'''-\phi(\phi'')^2-(\phi')^2\phi''}{\phi^3(\phi')^2}\rangle.
\end{align}
Hence, combining (\ref{eqsf-1}) with (\ref{eqsf-2}) we obtain 
\begin{align}\label{eqsf-3}
	\partial_t\frac{\phi''}{\phi\phi'}=\frac{u}{\phi'}\Delta_g\frac{\phi''}{\phi\phi'}-\frac{u}{\phi'}\langle\phi\partial_r,\nabla\frac{\phi\phi'\phi'''-\phi(\phi'')^2-(\phi')^2\phi''}{\phi^3(\phi')^2}\rangle.
\end{align}

Next, we derive the evolution equation of the support function $u$ along the flow (\ref{flow-WVPF}).

By the definition of $u$ and the evolution equation (\ref{eveq-normal}) of the unit outward normal $\nu$, we have
\begin{align}\label{eqsf-4}
	\partial_tu=\partial_t\langle\phi\partial_r,\nu\rangle
	=n\phi'-uH+\langle\phi\partial_r,\nabla\frac{uH}{\phi'}\rangle.
\end{align}
Meanwhile, since
\begin{align}\label{eqsf-9}
	\nabla_iu=\sum_{k=1}^nh_{ik}\langle\phi\partial_r,e_k\rangle,
\end{align}
then by Codazzi equation we get
\begin{align}\label{eqsf-5}
	\Delta_g u=&\sum_{k=1}^nh_{ik,i}\langle\phi\partial_r,e_k\rangle+\phi'H-\vert A\vert^2u\nonumber\\
	=&\langle\phi\partial_r,\nabla H\rangle+\sum_{k=1}^n\bar{R}_{\nu iki}\langle\phi\partial_r,e_k\rangle+\phi'H-\vert A\vert^2u\nonumber\\
	=&\langle\phi\partial_r,\nabla H\rangle -u\overline{Ric}(\nu,\nu)+\phi\overline{Ric}(\partial_r,\nu)+\phi'H-\vert A\vert^2u\nonumber\\
 =&\langle\phi\partial_r,\nabla H\rangle -u\overline{Ric}(\nu,\nu)-n\frac{u\phi''}{\phi}+\phi'H-\vert A\vert^2u,
\end{align}
where in the last equality we used the expression (\ref{eq-RicciT}) of the Ricci curvature tensor.
Therefore, using (\ref{eqsf-4}) and (\ref{eqsf-5}) we obtain
\begin{align}\label{eveq-u}
	\partial_tu=&\frac{u}{\phi'}\Delta_g u+H\langle\phi\partial_r,\nabla\frac{u}{\phi'}\rangle+n\phi'-2uH+\frac{u^2}{\phi'}\vert A\vert^2+n\frac{u^2\phi''}{\phi\phi'}+\frac{u^2}{\phi'}\overline{Ric}(\nu,\nu).
\end{align}
Hence, combining (\ref{eqsf-3}) with (\ref{eveq-u}) gives
\begin{align}\label{eqsf-6}
		\partial_t\frac{u\phi''}{\phi\phi'}=&u\partial_t\frac{\phi''}{\phi\phi'}+\frac{\phi''}{\phi\phi'}\partial_tu\nonumber\\
  =&u\left(\frac{u}{\phi'}\Delta_g\frac{\phi''}{\phi\phi'}-\frac{u}{\phi'}\langle\phi\partial_r,\nabla\frac{\phi\phi'\phi'''-\phi(\phi'')^2-(\phi')^2\phi''}{\phi^3(\phi')^2}\rangle\right)\nonumber\\
		&+\frac{\phi''}{\phi\phi'}\Big(\frac{u}{\phi'}\Delta_g u+H\langle\phi\partial_r,\nabla\frac{u}{\phi'}\rangle+n\phi'-2uH+\frac{u^2}{\phi'}\vert A\vert^2+n\frac{u^2\phi''}{\phi\phi'}+\frac{u^2}{\phi'}\overline{Ric}(\nu,\nu)\Big)\nonumber\\
		=&\frac{u}{\phi'}\Delta_g(\frac{u\phi''}{\phi\phi'})-2\frac{u}{\phi'}\langle\nabla u,\nabla\frac{\phi''}{\phi\phi'}\rangle+\frac{H\phi''}{\phi\phi'}\langle\phi\partial_r,\nabla \frac{u}{\phi'}\rangle+n\frac{\phi''}{\phi}-2\frac{u\phi''}{\phi\phi'}H+\frac{u^2\phi''}{\phi(\phi')^2}\vert A\vert^2\nonumber\\
		&+\frac{u^2\phi''}{\phi(\phi')^2}\overline{Ric}(\nu,\nu)+n(\frac{u\phi''}{\phi\phi'})^2-\frac{u^2}{\phi'}\langle\phi\partial_r,\nabla\frac{\phi\phi'\phi'''-\phi(\phi'')^2-(\phi')^2\phi''}{\phi^3(\phi')^2}\rangle.
\end{align}
Note that
\begin{align}\label{eqsf-7}
    \frac{H\phi''}{\phi\phi'}\langle\phi\partial_r,\nabla \frac{u}{\phi'}\rangle=&\frac{H}{\phi'}\langle\phi\partial_r,\nabla \frac{u\phi''}{\phi\phi'}\rangle-\frac{uH}{(\phi')^2}\langle\phi\partial_r,\nabla\frac{\phi''}{\phi}\rangle\nonumber\\
    =&\frac{H}{\phi'}\langle\phi\partial_r,\nabla \frac{u\phi''}{\phi\phi'}\rangle-uH\frac{\phi\phi'''-\phi'\phi''}{\phi(\phi')^2}(1-\frac{u^2}{\phi^2})
\end{align}
and
\begin{align}\label{eqsf-8}
    2\frac{u}{\phi'}\langle\nabla u,\nabla\frac{\phi''}{\phi\phi'}\rangle=&2u\langle\nabla\frac{u}{\phi'},\nabla\frac{\phi''}{\phi\phi'}\rangle+2\frac{u^2\phi''}{(\phi')^2}\langle\partial_r,\nabla\frac{\phi''}{\phi\phi'}\rangle.
\end{align}
Substituting (\ref{eqsf-7}) and (\ref{eqsf-8}) into (\ref{eqsf-6}), we finally get (\ref{eqsf}).
\end{proof}
Applying Lemma \ref{evolution-h} and \ref{evolution-sf}, we are prepared to obtain the evolution equation of the tensor $S_{ij}$ along the flow (\ref{flow-WVPF}) now.
\begin{prop}
    Along the flow (\ref{flow-WVPF}), we have the following evolution equation of $S_{ij}$:
    \begin{align}
        \partial_tS_{ij}=&\frac{u}{\phi'}\Delta_g S_{ij}+\nabla_iH\nabla_j\frac{u}{\phi'}+\nabla_jH\nabla_i\frac{u}{\phi'}+\frac{H}{\phi'}\langle\phi\partial_r,\nabla S_{ij}\rangle+\frac{u}{\phi'}S_{ij}\overline{Ric}(\nu,\nu)+3\frac{u^2 H\phi''}{\phi(\phi')^2}S_{ij}\nonumber\\
	&+(n-3\frac{uH}{\phi'})((S^2)_{ij}+2\frac{\phi''u}{\phi\phi'}S_{ij})+\frac{u}{\phi'}\vert A\vert^2S_{ij}-\frac{H\phi''}{\phi'}\left(\langle\partial_r,e_i\rangle\nabla_j\frac{u}{\phi'}+\langle\partial_r,e_j\rangle\nabla_i\frac{u}{\phi'}\right)\nonumber\\
   &+Hh_{ij}-2n\frac{u\phi''}{\phi\phi'}h_{ij}-\frac{uH}{\phi(\phi')^2}(\phi\phi'''-\phi'\phi'')\langle\partial_r,e_i\rangle\langle\partial_r,e_j\rangle\nonumber\\
       &+\Big[2u\langle\nabla\frac{u}{\phi'},\nabla\frac{\phi''}{\phi\phi'}\rangle-n\frac{\phi''}{\phi}+\frac{u^2}{\phi'}\langle\phi\partial_r,\nabla\frac{\phi\phi'\phi'''-\phi(\phi'')^2-(\phi')^2\phi''}{\phi^3(\phi')^2}\rangle \nonumber\\
  &+uH\frac{\phi\phi'''-\phi'\phi''}{\phi(\phi')^2}(1-\frac{u^2}{\phi^2})+2\frac{u^2\phi''}{(\phi')^2}\langle\partial_r,\nabla\frac{\phi''}{\phi\phi'}\rangle\Big]g_{ij}+\frac{u}{\phi'}\sum_{k=1}^n\left(\overline{\nabla}_k\bar{R}_{\nu jki}+\overline{\nabla}_i\bar{R}_{\nu kkj}\right)\nonumber\\
 &+2\frac{u}{\phi'}\sum_{k,\ell=1}^nh_{k\ell}\bar{R}_{\ell jki}-(n+\frac{uH}{\phi'})\bar{R}_{\nu j\nu i}+\frac{u}{\phi'}\sum_{k,\ell=1}^n\left(h_{\ell j}\bar{R}_{\ell kki}+h_{i\ell}\bar{R}_{\ell kkj}\right).\label{eveq-S}
    \end{align}
\end{prop}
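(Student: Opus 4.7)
The plan is to derive the evolution equation for $S_{ij}=h_{ij}-\frac{u\phi''}{\phi\phi'}g_{ij}$ by combining the three evolution equations already available: the one for $h_{ij}$ from Lemma \ref{evolution-h}, the one for the scalar function $\frac{u\phi''}{\phi\phi'}$ from Lemma \ref{evolution-sf}, and the metric evolution $\partial_t g_{ij}=2(n-\frac{uH}{\phi'})h_{ij}$ from Proposition \ref{prop-evolution}. The product rule then gives
\begin{equation*}
\partial_t S_{ij}=\partial_t h_{ij}-\bigl(\partial_t\tfrac{u\phi''}{\phi\phi'}\bigr)g_{ij}-\tfrac{u\phi''}{\phi\phi'}\,\partial_t g_{ij},
\end{equation*}
so everything is reduced to substitution and reorganization of the resulting expression.

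The key step is to convert the second-order differential operator that acts on $h_{ij}$ in Lemma \ref{evolution-h} into the same operator acting on $S_{ij}$, absorbing the scalar piece into the zeroth-order remainder. Because $g_{ij}$ is parallel, one has $\Delta_g h_{ij}=\Delta_g S_{ij}+g_{ij}\,\Delta_g\!\bigl(\frac{u\phi''}{\phi\phi'}\bigr)$ and $\langle\phi\partial_r,\nabla h_{ij}\rangle=\langle\phi\partial_r,\nabla S_{ij}\rangle+g_{ij}\,\langle\phi\partial_r,\nabla\!\frac{u\phi''}{\phi\phi'}\rangle$. When these are plugged in, the $\Delta_g\!\bigl(\frac{u\phi''}{\phi\phi'}\bigr)$ and $\langle\phi\partial_r,\nabla\!\frac{u\phi''}{\phi\phi'}\rangle$ contributions cancel exactly against the corresponding parts of $\partial_t\!\bigl(\frac{u\phi''}{\phi\phi'}\bigr)g_{ij}$, which is precisely why Lemma \ref{evolution-sf} was written with the operators $\frac{u}{\phi'}\Delta_g$ and $\frac{H}{\phi'}\langle\phi\partial_r,\nabla\cdot\rangle$ on the right-hand side.

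Next I would reorganize the zeroth-order terms that still involve $h_{ij}$, rewriting them through $S_{ij}$ whenever it creates a nonnegative (or at least sign-controlled) structure: in particular $(h^2)_{ij}=(S^2)_{ij}+2\frac{u\phi''}{\phi\phi'}S_{ij}+\bigl(\frac{u\phi''}{\phi\phi'}\bigr)^2 g_{ij}$, and the term $\frac{u^2 H\phi''}{\phi(\phi')^2}h_{ij}$ can be split as $\frac{u^2 H\phi''}{\phi(\phi')^2}S_{ij}+\frac{u^3 H(\phi'')^2}{\phi^2(\phi')^3}g_{ij}$. The factor $(n-3\frac{uH}{\phi'})$ then naturally multiplies $(S^2)_{ij}+2\frac{u\phi''}{\phi\phi'}S_{ij}$, matching the form stated in the proposition. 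One must also combine the terms $-2\frac{uH\phi''}{\phi\phi'}g_{ij}$ from Lemma \ref{evolution-h}, the $-2\frac{u\phi''}{\phi\phi'}H$ term from Lemma \ref{evolution-sf} (multiplied by $g_{ij}$), and the $-2\frac{u\phi''}{\phi\phi'}(n-\frac{uH}{\phi'})h_{ij}$ coming from $\partial_t g_{ij}$, the last of which splits into a piece proportional to $h_{ij}$ and pieces reabsorbed into the $S_{ij}$ and $g_{ij}$ coefficients.

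The main obstacle is purely bookkeeping: tracking the many zeroth-order terms without sign errors, correctly grouping the remaining $\phi$, $\phi'$, $\phi''$, $\phi'''$ factors, and verifying that all the tensorial Riemann curvature contributions $\overline{\nabla}_k\bar R_{\nu j k i}$, $h_{k\ell}\bar R_{\ell j k i}$, $\bar R_{\nu j\nu i}$ and so on pass through unchanged from Lemma \ref{evolution-h}, while the scalar curvature term $\overline{Ric}(\nu,\nu)$ gets redistributed between an $S_{ij}\overline{Ric}(\nu,\nu)$ contribution and a residual $g_{ij}$ contribution that combines with the scalar piece from Lemma \ref{evolution-sf}. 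Once this collection is carried out carefully, the stated formula \eqref{eveq-S} falls out directly; there is no further analytic input required at this step.
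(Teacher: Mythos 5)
Your proposal is correct and follows exactly the route the paper takes: differentiate $S_{ij}=h_{ij}-\frac{u\phi''}{\phi\phi'}g_{ij}$ by the product rule, substitute the three evolution equations \eqref{eveq-metric}, \eqref{eveq-h2}, \eqref{eqsf}, and reorganize. The paper itself only states ``a direct calculation yields \eqref{eveq-S},'' whereas you usefully make explicit where the key cancellations occur (the $\frac{u}{\phi'}\Delta_g$ and $\frac{H}{\phi'}\langle\phi\partial_r,\nabla\cdot\rangle$ pieces of $\partial_t\bigl(\tfrac{u\phi''}{\phi\phi'}\bigr)g_{ij}$ against the corresponding parts of $\Delta_g h_{ij}$ and $\langle\phi\partial_r,\nabla h_{ij}\rangle$, and the $|A|^2$ and $\overline{Ric}(\nu,\nu)$ pieces likewise) and how $(h^2)_{ij}$ and the $h_{ij}$-proportional terms split between the $S_{ij}$, $h_{ij}$ and $g_{ij}$ coefficients; these are exactly the steps needed, and the resulting bookkeeping does close.
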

\begin{proof}
By the definition of $S_{ij}$,  we have
\begin{align}\label{eq-evoSij}
   \partial_tS_{ij}=&\partial_th_{ij}-\partial_t(\frac{u\phi''}{\phi\phi'})g_{ij}-\frac{u\phi''}{\phi\phi'}\partial_tg_{ij}. 
\end{align}
Substituting (\ref{eveq-metric}), (\ref{eveq-h2}) and (\ref{eqsf}) into \eqref{eq-evoSij}, a direct calculation yields (\ref{eveq-S}).

\end{proof}

\subsection{Proof of Theorem \ref{preservingofconvex}}
The static convexity of the flow hypersurface $M_t$ is equivalent to $S_{ij}\geq0$ holds everywhere on $M_t$, 
it's time to apply the tensor maximum principle in Theorem \ref{tensormax}.

Let $(x_0,t_0)$ be the point where $S_{ij}$ has a null vector $v$. We choose normal coordinates around $(x_0,t_0)$ such that $h_{ij}=k_i\delta_{ij}$ and $g_{ij}=\delta_{ij}$ at this point, where $\kappa_1<\kappa_2<\cdots<\kappa_n$ are the principal curvatures. By continuity, it's reasonable to assume that the principal curvatures are mutually distinct and in increasing order at $(x_0,t_0)$. The null vector condition implies that $v=e_1$ and $S_{11}=\kappa_1-\frac{u\phi''}{\phi\phi'}=0$ at $(x_0,t_0)$. The terms involving $S_{ij}$ and $(S^2)_{ij}$ satisfy the null vector condition and can be ignored. Moreover, by (\ref{eqsf-9}) we have
\begin{align}\label{preconv-1}
    \nabla_i\frac{u}{\phi'}=\frac{\phi}{\phi'}(\kappa_i-\frac{u\phi''}{\phi\phi'})\langle\partial_r,e_i\rangle=\frac{\phi}{\phi'}S_{ii}\langle\partial_r,e_i\rangle.
\end{align}
Since $\kappa_1=\frac{u\phi''}{\phi\phi'}$, this implies $\nabla_1\frac{u}{\phi'}=0$ at $(x_0,t_0)$. Let $N_{11}$ be the remaining terms in the RHS of (\ref{eveq-S}), we now have 
\begin{align}\label{eq-cN11}
    N_{11}=&H\frac{u\phi''}{\phi\phi'}-2n(\frac{u\phi''}{\phi\phi'})^2+\frac{uH}{\phi(\phi')^2}(\phi\phi'''-\phi'\phi'')(1-\frac{u^2}{\phi^2}-\langle\partial_r,e_1\rangle^2)\nonumber\\
       &+2u\langle\nabla\frac{u}{\phi'},\nabla\frac{\phi''}{\phi\phi'}\rangle-n\frac{\phi''}{\phi}+\frac{u^2}{\phi'}\langle\phi\partial_r,\nabla\frac{\phi\phi'\phi'''-\phi(\phi'')^2-(\phi')^2\phi''}{\phi^3(\phi')^2}\rangle \nonumber\\
  &+2\frac{u^2\phi''}{(\phi')^2}\langle\partial_r,\nabla\frac{\phi''}{\phi\phi'}\rangle+\frac{u}{\phi'}\sum_{k=1}^{n}{(\overline{\nabla}_{k}\bar{R}_{\nu 1 k 1}+\overline{\nabla}_1\bar{R}_{\nu k k 1})}+2\frac{u}{\phi'}\sum_{k=1}^{n}{S_{k k}\bar{R}_{k 1 k 1}}\nonumber\\
  &-(n+\frac{uH}{\phi'})\bar{R}_{\nu 1 \nu 1}.
\end{align}
To get the concrete expression of $N_{11}$, we need to calculate the items with regard to Riemaniann curvature tensors and their derivatives carefully. Firstly, we have the following proposition.
\begin{prop}\label{prop-RNR}
	At the point $(x_0,t_0)$ where the tensor maximum principle applies, for any $k\geq 2$ we have:
	\begin{align}
		\overline{\nabla}_{k}\bar{R}_{\nu 1 k 1}=&u\phi\langle \overline{\nabla}{\frac{(\phi')^2-\phi\phi''-1}{\phi^4}},e_{k}\rangle\langle \partial_r, e_{k}\rangle+u\phi'\frac{(\phi')^2-\phi\phi''-1}{\phi^4},\label{eq-nRl}\\
		\overline{\nabla}_{1}\bar{R}_{\nu k k 1}=&-u\phi\langle\overline{\nabla}{\frac{(\phi')^2-\phi\phi''-1}{\phi^4}},e_1\rangle\langle{\partial_r, e_1}\rangle-u\phi'\frac{(\phi')^2-\phi\phi''-1}{\phi^4},\label{eq-nRll}\\
		\overline{R}_{k 1 k 1}=&-\frac{(\phi')^2-1}{\phi^2}+\frac{(\phi')^2-\phi\phi''-1}{\phi^2}(\langle \partial_r, e_1\rangle^2+\langle \partial_r, e_{k}\rangle^2),\label{eq-Rl1}\\
		\overline{R}_{\nu 1 \nu 1}=&-\frac{(\phi')^2-1}{\phi^2}+\frac{(\phi')^2-\phi\phi''-1}{\phi^2}(\langle \partial_r, e_1\rangle^2+\frac{u^2}{\phi^2}).\label{eq-Rnu1}
	\end{align}	
\end{prop}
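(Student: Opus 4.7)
The plan is to prove all four identities by direct computation from the closed-form expression \eqref{eq-gnR} for $\bar R$, handling the ambient covariant derivatives by exploiting the conformal Killing property $\overline{\nabla}_X(\phi\partial_r)=\phi' X$ from \eqref{eq-conformal}. Throughout I work in the adapted orthonormal frame at $(x_0,t_0)$ where $g_{ij}=\delta_{ij}$, $\bar g(\nu,e_i)=0$, and the only scalars that appear are $\eta_i := \langle \partial_r, e_i\rangle$ and $\eta_\nu := \langle \partial_r, \nu\rangle = u/\phi$. Since $k\geq 2$ in the statement, I also use $\bar g(e_1, e_k) = \delta_{1k} = 0$.

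For the algebraic identities \eqref{eq-Rl1} and \eqref{eq-Rnu1} I would split the RHS of \eqref{eq-gnR} into a ``conformal'' piece $F\cdot T_1$ and a ``spherical'' piece $-G\cdot T_2$, where $F:=((\phi')^2-\phi\phi''-1)/\phi^2$, $G:=((\phi')^2-1)/\phi^2$, and $T_1$, $T_2$ are the obvious tensors in $\eta$ and $\bar g$. Substituting $(X,Y,Z,W)=(e_k, e_1, e_k, e_1)$ and using $\bar g(e_1, e_k)=0$ collapses $T_1$ to $\eta_k^2 + \eta_1^2$ and $T_2$ to $1$, giving \eqref{eq-Rl1}. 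Replacing the first $e_k$ by $\nu$ and using $\bar g(\nu, e_i)=0$ collapses $T_1$ to $\eta_\nu^2 + \eta_1^2 = u^2/\phi^2 + \eta_1^2$ and $T_2$ to $1$, giving \eqref{eq-Rnu1}.

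For the derivative identities \eqref{eq-nRl} and \eqref{eq-nRll} I would differentiate $\bar R = F\cdot T_1 - G\cdot T_2$ as a $(0,4)$-tensor. Because $\overline\nabla\bar g=0$, only $\overline\nabla T_1$ and the radial derivatives of $F, G$ contribute. A direct check shows $T_2(\nu, e_1, e_k, e_1) = T_2(\nu, e_k, e_k, e_1) = 0$ at the point, so the $G$-piece drops out entirely. For the $F$-piece I would expand
$(\overline\nabla_{e_m}\bar R)(\cdot) = \eta_m F' \,T_1(\cdot) + F\,(\overline\nabla_{e_m} T_1)(\cdot)$,
using $\overline\nabla F = F'(r)\partial_r$ and the formula $(\overline\nabla_{e_m}\eta)(X) = (\phi'/\phi)\bigl(\bar g(e_m,X) - \eta(e_m)\eta(X)\bigr)$, which is \eqref{eq-conformal} rewritten for the $1$-form $\eta = \bar g(\cdot,\partial_r)$. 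The product rule on the four $\eta$-factors of $T_1$ generates eight terms, six of which vanish at the point by the orthogonality relations $\bar g(\nu, e_i) = 0$ and $\bar g(e_1, e_k) = 0$; the surviving pair assembles to $\tfrac{u\phi'}{\phi^2}(1-2\eta_k^2)$ in case \eqref{eq-nRl} and to $\tfrac{u\phi'}{\phi^2}(2\eta_1^2-1)$ in case \eqref{eq-nRll}. Combining with the $\eta_m F'\,T_1$ contribution and using the identity $F'/\phi^2 - 2F\phi'/\phi^3 = \partial_r(F/\phi^2)$ repackages the $\eta_m^2$-terms as $\pm u\phi\,\langle \overline\nabla(F/\phi^2), e_m\rangle\,\eta_m$, where $F/\phi^2 = ((\phi')^2-\phi\phi''-1)/\phi^4$; the leftover $\pm Fu\phi'/\phi^2$ becomes the algebraic term $\pm u\phi'\cdot ((\phi')^2-\phi\phi''-1)/\phi^4$. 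The opposite signs in \eqref{eq-nRl} and \eqref{eq-nRll} trace back to $T_1(\nu, e_1, e_k, e_1) = +u\eta_k/\phi$ versus $T_1(\nu, e_k, e_k, e_1) = -u\eta_1/\phi$.

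The main obstacle is the bookkeeping in the third step: each $(\overline\nabla_{e_m} T_1)$ is an eight-term sum in which the surviving combinations are a small fraction of the total, and one must be scrupulous with signs and with the symmetries of $T_1$ to see the cancellations. A secondary technicality is the repackaging: it is easy to leave the answer as a sum of $F'/\phi$-terms and $F\phi'/\phi^2$-terms without recognizing the ``derivative + algebraic'' splitting of the statement as the cleanest form. Once the target shape is in mind, however, the identification $\langle \overline\nabla(F/\phi^2), e_m\rangle = \partial_r(F/\phi^2)\cdot \eta_m$ does the rest, and no additional input beyond \eqref{eq-gnR} and \eqref{eq-conformal} is required.
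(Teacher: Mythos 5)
Your proposal is correct, and it takes a genuinely different route from the paper. The paper computes $\overline{\nabla}_k\bar R_{\nu 1 k 1}$ by the naive Leibniz expansion along the frame,
\begin{equation*}
\overline{\nabla}_k\bar R_{\nu 1 k 1}=e_k\bigl(\bar R(\nu,e_1,e_k,e_1)\bigr)-\bar R(\overline{\nabla}_{e_k}\nu,e_1,e_k,e_1)-\cdots,
\end{equation*}
then invokes the Gauss--Weingarten relations to rewrite $\overline{\nabla}_{e_k}\nu$, $\overline{\nabla}_{e_k}e_k$ in terms of $h_{kk}$ and the normal; this introduces two $h_{kk}$-terms and one more from differentiating $u$ inside $\bar R(\nu,e_1,e_k,e_1)=u\phi^{-3}((\phi')^2-\phi\phi''-1)\eta_k$, and the proof ends by observing that all the $h_{kk}$-contributions cancel. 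Your approach is purely ambient: you write $\bar R=F\,T_1-G\,T_2$ as a $(0,4)$-tensor, compute $\overline{\nabla}\bar R$ once and for all by the product rule using $\overline{\nabla}\bar g=0$, $\overline{\nabla}F=F'\partial_r$, and the identity $(\overline{\nabla}_{e_m}\eta)(X)=\tfrac{\phi'}{\phi}\bigl(\bar g(e_m,X)-\eta_m\eta(X)\bigr)$ that follows from \eqref{eq-conformal}, and only at the very end plug in the frame vectors and use $\bar g(\nu,e_i)=\bar g(e_1,e_k)=0$. I checked the key evaluations: $T_1(\nu,e_1,e_k,e_1)=\tfrac{u}{\phi}\eta_k$, $T_1(\nu,e_k,e_k,e_1)=-\tfrac{u}{\phi}\eta_1$, $T_2(\nu,e_1,e_k,e_1)=T_2(\nu,e_k,e_k,e_1)=0$, $(\overline{\nabla}_{e_k}T_1)(\nu,e_1,e_k,e_1)=\tfrac{u\phi'}{\phi^2}(1-2\eta_k^2)$, $(\overline{\nabla}_{e_1}T_1)(\nu,e_k,e_k,e_1)=\tfrac{u\phi'}{\phi^2}(2\eta_1^2-1)$, and the repackaging via $\partial_r(F/\phi^2)=F'/\phi^2-2F\phi'/\phi^3$ all come out as you describe and reproduce \eqref{eq-nRl} and \eqref{eq-nRll}. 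What your method buys is that the second fundamental form never appears, so there is no cancellation to verify; what the paper's method buys is that it never needs the $T_1/T_2$ decomposition or the covariant derivative of the $1$-form $\eta$ -- it just differentiates the scalar expression and leans on the already-tabulated formulas \eqref{eqsf-9} for $\nabla u$. Both yield the same answer; yours is somewhat more structural and makes the sign flip between \eqref{eq-nRl} and \eqref{eq-nRll} transparent from the sign of $T_1$ at the two argument lists.
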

\begin{proof}
	Note that equations \eqref{eq-Rl1} and \eqref{eq-Rnu1} follows from equation \eqref{eq-gnR} directly and equation \eqref{eq-nRll} can be deduced from equation \eqref{eq-nRl} with swapping the items $e_1$ and $e_{k}$. Hence, in the following, we just show the detail of the calculation of $\overline{\nabla}_{k}\bar{R}_{\nu 1 k 1}$.
	
   For any fixed $k\geq 2$, we have 
	\begin{align}\label{eq-calnR}
		\overline{\nabla}_{k}\bar{R}_{\nu 1 k 1}=&\overline{\nabla}\bar{R}(\nu,e_1,e_{k},e_1;e_{k})\notag\\
		=&e_{k}(\bar{R}(\nu,e_1,e_{k},e_1))-\bar{R}(\overline{\nabla}_{e_{k}}{\nu},e_1,e_{k},e_1)-\bar{R}(\nu,\overline{\nabla}_{e_{k}}e_1,e_{k},e_1)\notag\\
		&-\bar{R}(\nu,e_1,\overline{\nabla}_{e_{k}}e_{k},e_1)-\bar{R}(\nu,e_1,e_{k},\overline{\nabla}_{e_{k}}e_1)\nonumber\\
  =&e_{k}(\bar{R}(\nu,e_1,e_{k},e_1))-h_{kk}\bar{R}(e_k,e_1,e_{k},e_1)+h_{kk}\bar{R}(\nu,e_1,\nu,e_1).
	\end{align}
By the equation \eqref{eq-gnR}, we have
\begin{align*}
    \bar{R}(\nu,e_1,e_{k},e_1)=&u\frac{(\phi')^2-\phi\phi''-1}{\phi^3}\langle \partial_r, e_{k}\rangle,\\
    \bar{R}(\nu,e_1,\nu,e_1)=&\frac{(\phi')^2-\phi\phi''-1}{\phi^2}(\frac{u^2}{\phi^2}+\langle \partial_r, e_{1}\rangle^2)-\frac{(\phi')^2-1}{\phi^2}.
\end{align*}
Then we can calculate
\begin{align*}
	e_{k}(\bar{R}(\nu,e_1,e_{k},e_1))=&h_{kk}\frac{(\phi')^2-\phi\phi''-1}{\phi^2}\langle{\partial_r,e_{k}}\rangle^2+u\phi\langle{\overline{\nabla}{\frac{(\phi')^2-\phi\phi''-1}{\phi^4}},e_{k}}\rangle\langle{\partial_r, e_{k}}\rangle\\
	&+u\frac{(\phi')^2-\phi\phi''-1}{\phi^4}(\phi'-uh_{kk}).
\end{align*}
Substituting the above equations and \eqref{eq-Rl1} into equation \eqref{eq-calnR} gives equation \eqref{eq-nRl}.
\end{proof}
Finally, we give the concrete expression of $N_{11}$ as follows using the static condition (\ref{eq-sTa}).
\begin{prop}\label{Thm-generalN}
	At the point $(x_0,t_0)$ where the tensor maximum principle applies, we have 
     \begin{align}\label{eq-N11}
	N_{11}=&n\frac{(\phi')^2-\phi\phi''-1}{\phi^2}\Bigg[\left(1+\frac{uH}{\phi'}-(n+3)\frac{u^2}{\phi^2}\right)(1-\frac{u^2}{\phi^2}-\langle{\partial_r,e_1}\rangle^2)\nonumber\\
	&-\frac{2u}{n\phi'}trS(1-\langle \partial_r, e_1\rangle^2)
    +2\frac{u}{\phi'}\sum_{k=1}^nS_{kk}\langle\partial_r,e_k\rangle^2\Bigg]-2\frac{u(\phi'')^2}{(\phi')^3}\sum_{k=1}^nS_{kk}\langle\partial_r,e_k\rangle^2,
\end{align}
where $trS:=H-n\frac{u\phi''}{\phi\phi'}$ is the trace of $S_{ij}$.
\end{prop}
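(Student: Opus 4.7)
The plan is to start from the expression \eqref{eq-cN11} for $N_{11}$ and reduce each term using three ingredients: (i) the explicit formulas of Proposition \ref{prop-RNR} for the ambient curvature quantities $\bar R_{\nu 1 k 1}$, $\bar R_{\nu k k 1}$, $\bar R_{k1k1}$ and $\bar R_{\nu 1 \nu 1}$; (ii) the gradient identity $\nabla_i(u/\phi') = (\phi/\phi')S_{ii}\langle\partial_r,e_i\rangle$ from \eqref{preconv-1} combined with the chain rule $\nabla_i f = f'(r)\langle\partial_r,e_i\rangle$ for any function $f$ depending only on $r$; and (iii) the static condition \eqref{eq-sTa}, which by differentiating the conserved quantity $\phi^{n-1}((\phi')^2 - \phi\phi'' - 1)$ from Lemma \ref{Lem-stacon} yields the key identity
\begin{equation*}
\phi\phi''' - \phi'\phi'' = (n-1)\frac{\phi'}{\phi}\bigl((\phi')^2 - \phi\phi'' - 1\bigr).
\end{equation*}
Writing $K := ((\phi')^2-\phi\phi''-1)/\phi^2$, this reduces every occurrence of $\phi'''$ to a combination of $K$ and $(\phi'')^2/(\phi')^2$, which is the mechanism that isolates the lone non-$K$ contribution in \eqref{eq-N11}.

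Next I would carry out the curvature substitutions. After inserting the expressions of Proposition \ref{prop-RNR}, the double sum $\sum_k\bigl(\overline{\nabla}_k\bar R_{\nu 1 k 1} + \overline{\nabla}_1 \bar R_{\nu k k 1}\bigr)$ telescopes into a $K$-weighted quantity in $\langle\partial_r,e_k\rangle^2$ plus an $r$-derivative piece handled by the static identity. The terms $\sum_k S_{kk}\bar R_{k1k1}$ and $\bar R_{\nu 1 \nu 1}$ each split as a constant-sectional-curvature piece of the form $-((\phi')^2-1)/\phi^2$ plus a piece proportional to $K$, depending on $\langle\partial_r, e_k\rangle^2$ and $u^2/\phi^2$. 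The crucial geometric identity here is $\sum_k\langle\partial_r, e_k\rangle^2 = 1 - u^2/\phi^2$, coming from the orthogonal decomposition $\partial_r = (u/\phi)\nu + \partial_r^T$ and $\langle\partial_r, \nu\rangle = u/\phi$.

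Then I would evaluate the remaining gradient terms. Since $\nabla$ is the tangential gradient on $M$, for any radial function $f$ one has $\langle\partial_r, \nabla f\rangle = f'(r)(1 - u^2/\phi^2)$, while $\langle\nabla(u/\phi'), \nabla f\rangle = (\phi/\phi')f'(r)\sum_i S_{ii}\langle\partial_r,e_i\rangle^2$ by \eqref{preconv-1}. Using the static identity, the radial derivative $\tfrac{d}{dr}(\phi''/(\phi\phi'))$ simplifies to $(n-1)K/\phi - (\phi'')^2/(\phi(\phi')^2)$, and this is exactly where the isolated term $-2u(\phi'')^2/(\phi')^3 \sum_k S_{kk}\langle\partial_r,e_k\rangle^2$ in \eqref{eq-N11} originates. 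Likewise the $(\phi\phi'''-\phi'\phi'')$ factor in the third term of \eqref{eq-cN11} converts directly to $(n-1)\phi\phi' K$.

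The main obstacle is the bookkeeping in the collection step: one must verify that the residual non-$K$ contributions arising from $H\cdot u\phi''/(\phi\phi')$, $-2n(u\phi''/(\phi\phi'))^2$, $-n\phi''/\phi$, the $-((\phi')^2-1)/\phi^2$ components of $\bar R_{k1k1}$ and $\bar R_{\nu 1 \nu 1}$, and the higher-order $(\phi'')$-type contributions from $\langle\phi\partial_r,\nabla[\cdot]\rangle$ and $\langle\partial_r, \nabla(\phi''/(\phi\phi'))\rangle$ all cancel exactly, leaving only the single $(\phi'')^2/(\phi')^3$ term and pure $K$-multiples. Once this cancellation is established, regrouping the surviving $K$-coefficients with the help of $\mathrm{tr}\,S = H - n u\phi''/(\phi\phi')$ and $S_{11}=0$ at the null-vector point $(x_0,t_0)$ produces precisely the three bracketed summands in \eqref{eq-N11}, completing the identification.
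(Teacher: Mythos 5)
Your proposal is correct and follows essentially the same route as the paper's own proof: you invoke the same curvature formulas from Proposition~\ref{prop-RNR}, the same orthogonal-decomposition identity $\sum_k\langle\partial_r,e_k\rangle^2=1-u^2/\phi^2$, the same gradient formula \eqref{preconv-1}, and the same static-condition reduction of $\phi'''$ (your identity $\phi\phi'''-\phi'\phi''=(n-1)\tfrac{\phi'}{\phi}((\phi')^2-\phi\phi''-1)$ is precisely the paper's \eqref{expN-1} divided by $\phi'$, and your formula for $\tfrac{d}{dr}(\phi''/(\phi\phi'))$ is the paper's \eqref{expN-3}). The only cosmetic difference is your introduction of the abbreviation $K$, which makes the bookkeeping a bit cleaner; the final collection step is asserted rather than carried out in detail, but that matches the paper's own level of presentation (``uniting similar terms'').
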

\begin{proof}
	By Proposition \ref{prop-RNR}, we calculate as follows:
	\begin{align*}
		&\sum_{k=1}^{n}{(\overline{\nabla}_{k}\bar{R}_{\nu 1 k 1}+\overline{\nabla}_1\bar{R}_{\nu k k 1})}\\
		=&u\phi\sum_{k=1}^{n}\left(\langle{\overline{\nabla}{\frac{(\phi')^2-\phi\phi''-1}{\phi^4}},e_{k}}\rangle\langle{\partial_r,e_{k}}\rangle-\langle{\overline{\nabla}{\frac{(\phi')^2-\phi\phi''-1}{\phi^4}},e_1}\rangle\langle{\partial_r,e_1}\rangle\right)\\
		=&u\phi\left(\langle{\overline{\nabla}{\frac{(\phi')^2-\phi\phi''-1}{\phi^4}},\partial_r-\frac{u}{\phi}\nu}\rangle-n\langle{\overline{\nabla}{\frac{(\phi')^2-\phi\phi''-1}{\phi^4}},e_1}\rangle\langle{\partial_r,e_1}\rangle\right).
	\end{align*}
A direct calculation leads to
\begin{align}\label{eq-phi''}
	\overline{\nabla}{\frac{(\phi')^2-\phi\phi''-1}{\phi^4}}=\left(5\frac{\phi'\phi''}{\phi^4}-\frac{\phi'''}{\phi^3}-4\frac{\phi'((\phi')^2-1)}{\phi^5}\right)\partial_r.
\end{align}
Then 
\begin{align}
	&\sum_{k=1}^{n}{(\overline{\nabla}_{k}\bar{R}_{\nu 1 k 1}+\overline{\nabla}_1\bar{R}_{\nu k k 1})}\notag\\
	=&u\phi\left(5\frac{\phi'\phi''}{\phi^4}-\frac{\phi'''}{\phi^3}-4\frac{\phi'((\phi')^2-1)}{\phi^5}\right)\left(1-\frac{u^2}{\phi^2}-n\langle\partial_r,e_1\rangle^2\right).\label{eq-snR}
\end{align}
Similarly, 
\begin{align}
	\sum_{k=1}^{n}{S_{kk}\bar{R}_{k 1 k 1}}=&\frac{(\phi')^2-\phi\phi''-1}{\phi^2}trS\langle\partial_r,e_1\rangle^2+\frac{(\phi')^2-\phi\phi''-1}{\phi^2}\sum_{k=1}^n{S_{kk}\langle\partial_r,e_{k}\rangle^2}\nonumber\\
 &-\frac{(\phi')^2-1}{\phi^2}trS.\label{eq-shR}
\end{align}
Combining equations \eqref{eq-cN11}, \eqref{eq-Rnu1}, \eqref{eq-snR} with \eqref{eq-shR}, we have
\begin{align}
     N_{11}=&H\frac{u\phi''}{\phi\phi'}-2n(\frac{u\phi''}{\phi\phi'})^2+\frac{uH}{\phi(\phi')^2}(\phi\phi'''-\phi'\phi'')(1-\frac{u^2}{\phi^2}-\langle\partial_r,e_1\rangle^2)\nonumber\\
       &+2u\langle\nabla\frac{u}{\phi'},\nabla\frac{\phi''}{\phi\phi'}\rangle-n\frac{\phi''}{\phi}+\frac{u^2}{\phi'}\langle\phi\partial_r,\nabla\frac{\phi\phi'\phi'''-\phi(\phi'')^2-(\phi')^2\phi''}{\phi^3(\phi')^2}\rangle \nonumber\\
  &+u^2\frac{\phi}{\phi'}\left(5\frac{\phi'\phi''}{\phi^4}-\frac{\phi'''}{\phi^3}-4\frac{\phi'((\phi')^2-1)}{\phi^5}\right)\left(1-\frac{u^2}{\phi^2}-n\langle\partial_r,e_1\rangle^2\right)\nonumber\\
  &+2\frac{(\phi')^2-\phi\phi''-1}{\phi^2}trS\langle\partial_r,e_1\rangle^2+2u\frac{(\phi')^2-\phi\phi''-1}{\phi^2\phi'}\sum_{k=1}^n{S_{kk}\langle\partial_r,e_{k}\rangle^2}\nonumber\\
  &-(n+\frac{uH}{\phi'})\left[-\frac{(\phi')^2-1}{\phi^2}+\frac{(\phi')^2-\phi\phi''-1}{\phi^2}(\langle\partial_r,e_1\rangle^2+\frac{u^2}{\phi^2})\right]\nonumber\\
  &-2u\frac{(\phi')^2-1}{\phi^2\phi'}trS+2\frac{u^2\phi''}{(\phi')^2}\langle\partial_r,\nabla\frac{\phi''}{\phi\phi'}\rangle.\label{expN-4}
\end{align}
By the equation (\ref{eq-sTa}), we have
\begin{align}\label{expN-1}
	\phi\phi'\phi'''-(\phi')^2\phi''=(n-1)\frac{(\phi')^2-\phi\phi''-1}{\phi}(\phi')^2
\end{align}
and
\begin{align}\label{expN-2}
	-\frac{\phi'''}{\phi^2}+5\frac{\phi'\phi''}{\phi^3}+4\frac{\phi'(1-(\phi')^2)}{\phi^4}=-(n+3)\frac{(\phi')^2-\phi\phi''-1}{\phi^4}\phi'.
\end{align}
Then
\begin{align}\label{expN-3}
    \nabla\frac{\phi''}{\phi\phi'}=&\frac{\phi\phi'\phi'''-\phi(\phi'')^2-(\phi')^2\phi''}{\phi^2(\phi')^2}\sum_{k=1}^n\langle\partial_r,e_k\rangle e_k\nonumber\\
    =&(n-1)\frac{(\phi')^2-\phi\phi''-1}{\phi^3}\sum_{k=1}^n\langle\partial_r,e_k\rangle e_k-\frac{(\phi'')^2}{\phi(\phi')^2}\sum_{k=1}^n\langle\partial_r,e_k\rangle e_k.
\end{align}
Moreover, we have
\begin{align}\label{expN-5}
    &\nabla\frac{\phi\phi'\phi'''-\phi(\phi'')^2-(\phi')^2\phi''}{\phi^3(\phi')^2}\nonumber\\=&(n-1)\nabla\frac{(\phi')^2-\phi\phi''-1}{\phi^4}-2\frac{\phi''}{\phi\phi'}\nabla\frac{\phi''}{\phi\phi'}\nonumber\\
	=&-(n-1)(n+3)\frac{(\phi')^2-\phi\phi''-1}{\phi^5}\phi'\sum_{k=1}^n\langle\partial_r,e_k\rangle e_k\nonumber\\
	&-2(n-1)\phi''\frac{(\phi')^2-\phi\phi''-1}{\phi^4\phi'}\sum_{k=1}^n\langle\partial_r,e_k\rangle e_k+2\frac{(\phi'')^3}{\phi^2(\phi')^3}\sum_{k=1}^n\langle\partial_r,e_k\rangle e_k.
\end{align}
Now, substituting (\ref{preconv-1}), (\ref{expN-1}), (\ref{expN-2}), (\ref{expN-3}) and (\ref{expN-5}) into (\ref{expN-4}),  we finally get equation \eqref{eq-N11} by uniting similar terms.
\end{proof}
In order to prove that static convexity is preserved along the flow \eqref{flow-WVPF}, it's sufficient to show that
\begin{align*}
    N_{11}+\sup_{\Lambda} 2a^{k\ell}(2\Lambda_k^p\nabla_{\ell}S_{ip}v^i-\Lambda_k^p\Lambda_{\ell}^qS_{pq})\geq 0.
\end{align*}
Since $0=\nabla_kS_{11}=\nabla_k(h_{11}-\frac{u\phi''}{\phi\phi'})$ at the point $(x_0,t_0)$, we deduce that
\begin{align}
	&\sup_{\Lambda} 2a^{k\ell}(2\Lambda_k^p\nabla_{\ell}S_{ip}v^i-\Lambda_k^p\Lambda_{\ell}^qS_{pq})\notag\\
	=&2\frac{u}{\phi'}\sup_{\Lambda}\Big[\sum_{k,p=1}^n(2\Lambda_k^p\nabla_k S_{1p}-(\Lambda_k^p)^2 S_{pp})\Big]\notag\\
	=&2\frac{u}{\phi'}\sup_{\Lambda}\Big[\sum_{k=1}^n\sum_{p=2}^n(2\Lambda_k^p\nabla_k S_{1p}-(\Lambda_k^p)^2 S_{pp})\Big]\notag\\
	=&-2\frac{u}{\phi'}\sup_{\Lambda}\sum_{k=1}^n\sum_{p=2}^n S_{pp}\Big[\big(\Lambda_k^p-\frac{\nabla_k{S_{1p}}}{S_{pp}}\big)^2-\big(\frac{\nabla_k{S_{1p}}}{S_{pp}}\big)^2\Big]\notag\\
	=&2\frac{u}{\phi'}\sum_{k=1}^n\sum_{p=2}^n\frac{(\nabla_k{S_{1p}})^2}{S_{pp}}\geq2\frac{u}{\phi'}\sum_{p=2}^n\frac{(\nabla_1{S_{1p}})^2}{S_{pp}}.\label{eq-refined}
\end{align}
By Codazzi equation and equation (\ref{eq-gnR}), for any $p>1$ we have
\begin{align*}
	\nabla_1S_{1p}=&\nabla_1h_{1p}=\nabla_ph_{11}+\bar{R}_{\nu 1p1}\\
	=&\nabla_p(\frac{u\phi''}{\phi\phi'})+u\frac{(\phi')^2-\phi\phi''-1}{\phi^3}\langle\partial_r,e_p\rangle\\
	=&\frac{\phi''}{\phi'}S_{pp}\langle\partial_r,e_p\rangle+nu\frac{(\phi')^2-\phi\phi''-1}{\phi^3}\langle\partial_r,e_p\rangle.
\end{align*}
Hence,
\begin{align}\label{preconv-2}
	2\frac{u}{\phi'}\sum_{p=2}^n\frac{(\nabla_1{S_{1p}})^2}{S_{pp}}=&2u\frac{(\phi'')^2}{(\phi')^3}\sum_{p=2}^nS_{pp}\langle\partial_r,e_p\rangle^2+4n u^2\phi''\frac{(\phi')^2-\phi\phi''-1}{\phi^3(\phi')^2}(1-\frac{u^2}{\phi^2}-\langle \partial_r, e_1\rangle^2)\nonumber\\
	&+2n^2 u^3\frac{\left((\phi')^2-\phi\phi''-1\right)^2}{\phi^6\phi'}\sum_{p=2}^n\frac{\langle\partial_r,e_p\rangle^2}{S_{pp}}.
\end{align}
Combining (\ref{eq-N11}) with (\ref{preconv-2}), we have 
\begin{align*}
&N_{11}+2\frac{u}{\phi'}\sum_{p=2}^n\frac{(\nabla_1{S_{1p}})^2}{S_{pp}}\\
 =&n\frac{(\phi')^2-\phi\phi''-1}{\phi^2}\Bigg[\left(1+\frac{u}{\phi'}trS+(n+4)\frac{u^2\phi''}{\phi(\phi')^2}-(n+3)\frac{u^2}{\phi^2}\right)(1-\frac{u^2}{\phi^2}-\langle \partial_r, e_1\rangle^2)\nonumber\\
	&-\frac{2u}{n\phi'}trS(1-\langle \partial_r, e_1\rangle^2)
    +2\frac{u}{\phi'}\sum_{k=1}^nS_{kk}\langle\partial_r,e_k\rangle^2\Bigg]\\
	&+2n^2 u^3\frac{\left((\phi')^2-\phi\phi''-1\right)^2}{\phi^6\phi'}\sum_{p=2}^n\frac{\langle\partial_r,e_p\rangle^2}{S_{pp}}.\nonumber
\end{align*}
By the lower bound condition $(\phi')^2-\phi\phi''\geq0$, we see that
\begin{align*}
    (n+4)\frac{u^2\phi''}{\phi'^2\phi}\leq (n+4)\frac{u^2}{\phi^2}.
\end{align*}
Moreover, notice that $(\phi')^2-\phi\phi''-1\leq0$, $1-\frac{u^2}{\phi^2}-\langle \partial_r, e_1\rangle^2\geq0$ and $S_{pp}\leq trS$ for any $p$, we have
\begin{align}\label{preconv-3}
    &N_{11}+2\frac{u}{\phi'}\sum_{p=2}^n\frac{(\nabla_1{S_{1p}})^2}{S_{pp}}\nonumber\\
    \geq&n\frac{(\phi')^2-\phi\phi''-1}{\phi^2}\Bigg[\left(1+3\frac{u}{\phi'}trS+\frac{u^2}{\phi^2}\right)(1-\frac{u^2}{\phi^2}-\langle \partial_r, e_1\rangle^2)\nonumber\\
	&-\frac{2u}{n\phi'}trS(1-\langle \partial_r, e_1\rangle^2)
    +2n u^3\frac{(\phi')^2-\phi\phi''-1}{\phi^4\phi'}\frac{1-\frac{u^2}{\phi^2}-\langle \partial_r, e_1\rangle^2}{trS}\Bigg].
\end{align}
If $1-\frac{u^2}{\phi^2}-\langle\partial_r,e_1\rangle^2=0$ at $(x_0,t_0)$, we have
\begin{align*}
  N_{11}+2\frac{u}{\phi'}\sum_{p=2}^n\frac{(\nabla_1{S_{1p}})^2}{S_{pp}}
    \geq -2u\frac{(\phi')^2-\phi\phi''-1}{\phi^2\phi'}trS(1-\langle \partial_r, e_1\rangle^2)\geq0,
\end{align*}
which implies that the static convexity is preserved along the flow (\ref{flow-WVPF}). Otherwise, denote 
$$I:=\frac{1-\langle \partial_r, e_1\rangle^2}{1-\frac{u^2}{\phi^2}-\langle \partial_r, e_1\rangle^2},$$ 
then we have
\begin{align}\label{preconv-4}
    &N_{11}+2\frac{u}{\phi'}\sum_{p=2}^n\frac{(\nabla_1{S_{1p}})^2}{S_{pp}}\nonumber\\
    \geq&n\frac{(\phi')^2-\phi\phi''-1}{\phi^2}\Bigg[2+\frac{u}{\phi'}trS(3-\frac{2}{n}I)
    +2n\frac{(\phi')^2-\phi\phi''-1}{\phi^4\phi'}\frac{u^3}{trS}\Bigg](1-\frac{u^2}{\phi^2}-\langle \partial_r, e_1\rangle^2).
\end{align}
If $M_0$ is $\epsilon_0$-close to a slice of $\mathbf{N}^{n+1}$ in the $C^1$ sense, then by the definition \eqref{epclose} and Proposition \ref{prop-C0}, we know that the flow hypersurface $M_t$ remains to be $\epsilon_0$-close to a slice of $\mathbf{N}^{n+1}$ in the $C^1$ sense. Recall the condition (\ref{epclose}) says that $\frac{u^2}{\phi^2}\geq\frac{1}{1+\varepsilon_0}$. Therefore,
\begin{align}\label{In-lowerbound I}
  I\geq\frac{u^2}{\phi^2(1-\frac{u^2}{\phi^2})}\geq\frac{1}{\varepsilon_0}.
\end{align}
First we assume that $\varepsilon_0<\frac{2}{3n}$, then the Cauchy-Schwarz inequality leads to
\begin{align}\label{preconv-5}
    \frac{u}{\phi'}trS(3-\frac{2}{n}I)
    +2n\frac{(\phi')^2-\phi\phi''-1}{\phi^4\phi'}\frac{u^3}{trS}\leq-\frac{2 u^2}{\phi^2\phi'}\sqrt{(2I-3n)(1+\phi\phi''-(\phi')^2)}.
\end{align}
By \eqref{eq-stat-2}, there exists an universal constant $C_0\geq0$ such that
\begin{align}\label{preconv-6}
    (\phi')^2-\phi\phi''-1=-\frac{C_0}{\phi^{n-1}},\ \forall r\geq r_0.
\end{align}
We only consider the case that $C_0>0$, otherwise $\mathbf{N}^{n+1}$ is of constant sectional curvature, Hu and Li \cite{HL-2019} proved the preserving of the static convexity in this case without any further assumption. Thus, substituting (\ref{preconv-6}) into (\ref{preconv-5}) we get
\begin{align}\label{preconv-7}
   &\frac{u}{\phi'}trS(3-\frac{2}{n}I)
    +2n\frac{(\phi')^2-\phi\phi''-1}{\phi^4\phi'}\frac{u^3}{trS}\nonumber\\
    \leq& -2\sqrt{C_0(2I-3n)}\frac{u^2}{\phi^{\frac{n+3}{2}}\phi'}\nonumber\\
    \leq&-\frac{2}{1+\varepsilon_0}\sqrt{C_0(\frac{2}{\varepsilon_0}-3n)}\frac{1}{\phi^{\frac{n-1}{2}}\phi'},
\end{align}
where we have used \eqref{epclose} and \eqref{In-lowerbound I} in the second inequality. Since $M_t$ lie in the bounded domain $B(R)$, $\phi$ and $\phi'$ are monotone increasing functions of $r$, then the constant $\varepsilon_0$ can be chosen depending only on $n$ and $R$
such that 
\begin{align*}
   -\frac{2}{1+\varepsilon_0}\sqrt{C_0(\frac{2}{\varepsilon_0}-3n)}\frac{1}{\phi'\phi^{\frac{n-1}{2}}}\leq -2. 
\end{align*}
Therefore, combining (\ref{preconv-7}) with (\ref{preconv-4}), we obtain
\begin{align*}
    N_{11}+2\frac{u}{\phi'}\sum_{p=2}^n\frac{(\nabla_1{S_{1p}})^2}{S_{pp}}\geq0.
\end{align*}
By the tensor maximum principle, we have proved that the static convexity is preserved along the flow (\ref{flow-WVPF}).

Finally, we show that $M_t$ becomes strictly static convex for $t>0$ unless $M_{t}\equiv S(r_0)$ with $((\phi')^2-\phi\phi'')(r_0)=0$. By the strong maximum principle, the inequality becomes strict unless $\kappa_1\equiv\frac{u\phi''}{\phi\phi'}$ on $M_0$ everywhere. By Proposition \ref{sconvpoint},  there exists at least one point where all the principal curvatures are strictly greater than $\frac{u\phi''}{\phi\phi'}$ unless $M_{0}=S(r_0)$ with $((\phi')^2-\phi\phi'')(r_0)=0$ (hence $M_t\equiv S(r_0)$ since a slice is a static solution along the flow \eqref{flow-WVPF}). This completes the proof of Theorem \ref{preservingofconvex}.

\section{Applications to the weighted geometric inequalities}\label{sec-inequality}
Let $\mathbf{N}^{n+1}$ be a static, rotationally symmetric space equipped with the metric \eqref{eq-metric} and satisfies the assumptions in Definition \ref{defn-N}.  In this section,  we complete the proof of Corollary \ref{Cor-wii}. We first show the following monotonicities of $A_{i, \phi}(M_t)$ $(i=0,1)$ along the flow \eqref{flow-WVPF}.

\begin{prop}\label{Prop-monto}
Let $\mathbf{N}^{n+1}$ be a static, rotationally symmetric space and satisfies the assumptions in Definition \ref{defn-N} and $M_t$ be a smooth, static convex and graphical solution of the flow \eqref{flow-WVPF}. Then along the flow \eqref{flow-WVPF}, the weighted area $A_{0,\phi}(M_t)$ is monotone decreasing if $n\geq 2$ and the weighted mean curvature integral $A_{1,\phi}(M_t)$ is monotone decreasing if $n\geq 3$.
\end{prop}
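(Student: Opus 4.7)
The plan is to prove both monotonicities by a common template. First substitute the speed $\mathcal{F}=n-uH/\phi'$ into the variational formulas \eqref{va-Aphi} and \eqref{va-Hphi}; next apply a Newton--MacLaurin inequality, which is available because static convexity together with $\phi,\phi',\phi''>0$ forces all principal curvatures to be positive; then use the appropriate Minkowski identity from Lemma \ref{lem-Min} so that the leading linear-in-$H$ terms cancel exactly; and finally control the residual integral, a weighted multiple of $\Delta_g\Phi=n\phi'-uH$ (with $\Phi'=\phi$), by integrating by parts against $\Phi$ and then using static convexity and the static equation \eqref{eq-sTa}.

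For $A_{0,\phi}$ (the case $n\ge 2$), expanding \eqref{va-Aphi} produces four integrals. Newton's inequality $H^2\ge\tfrac{2n}{n-1}\sigma_2$ combined with \eqref{eq-Min2} makes the $n\int_{M_t}\phi' H\,d\mu_t$ terms cancel exactly, leaving
\begin{equation*}
\partial_t A_{0,\phi}(M_t)\le \int_{M_t}\frac{u\phi''}{\phi\phi'}(n\phi'-uH)\,d\mu_t+\frac{n}{n-1}\int_{M_t}u\bigl(\overline{Ric}(\partial_r,\partial_r)-\overline{Ric}(\nu,\nu)\bigr)\,d\mu_t.
\end{equation*}
For the first integral I rewrite $n\phi'-uH=\Delta_g\Phi$ and integrate by parts. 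Expanding the gradient of $\tfrac{u\phi''}{\phi\phi'}$ via $\nabla_iu=h_{ij}\nabla_j\Phi$, applying the static convexity bound $h_{ij}\nabla_i\Phi\nabla_j\Phi\ge \tfrac{u\phi''}{\phi\phi'}|\nabla\Phi|^2$, and using \eqref{eq-sTa} to rewrite $\phi\phi'\phi'''-(\phi')^2\phi''=\tfrac{(n-1)(\phi')^2}{\phi}[(\phi')^2-\phi\phi''-1]$, the pointwise expression collapses to
\begin{equation*}
\langle\nabla\tfrac{u\phi''}{\phi\phi'},\nabla\Phi\rangle\ge \frac{(n-1)u(\phi^2-u^2)\bigl[(\phi')^2-\phi\phi''-1\bigr]}{\phi^4},
\end{equation*}
where I have used $|\nabla\Phi|^2=\phi^2-u^2$. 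A direct calculation from \eqref{eq-RicciT} yields $\overline{Ric}(\partial_r,\partial_r)-\overline{Ric}(\nu,\nu)=\tfrac{(n-1)(\phi^2-u^2)[(\phi')^2-\phi\phi''-1]}{\phi^4}$, so the two residual integrands share the same pointwise factor and their coefficients combine as $-(n-1)+n=1$, yielding
\begin{equation*}
\partial_t A_{0,\phi}(M_t)\le \int_{M_t}\frac{u(\phi^2-u^2)\bigl[(\phi')^2-\phi\phi''-1\bigr]}{\phi^4}\,d\mu_t\le 0,
\end{equation*}
with the final sign coming from the upper bound $(\phi')^2-\phi\phi''\le 1$ in Definition \ref{defn-N}.

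For $A_{1,\phi}$ (the case $n\ge 3$) the template is identical but shifted one step up. It is built on \eqref{va-Hphi}, the Minkowski identity \eqref{eq-Min3} with $k=2$, and the next Newton inequality $\sigma_2^2\ge\tfrac{3(n-1)}{2(n-2)}H\sigma_3$, which removes the top-order $\sigma_3$ term. The extra $\Delta_g\phi'$ and $\phi'\overline{Ric}(\nu,\nu)$ appearing in \eqref{va-Hphi} are handled using $\nabla\phi'=\phi''\nabla r$ and \eqref{eq-RicciT}, which reprocesses them into Ricci-type weights again proportional to $[(\phi')^2-\phi\phi''-1]$. I expect the main obstacle to be algebraic rather than conceptual: one must carefully track two separate Ricci-type remainders, one coming from \eqref{eq-Min3} and one from the integration-by-parts step, and verify that they combine, via \eqref{eq-sTa} and the upper bound $(\phi')^2-\phi\phi''\le 1$, into a single nonpositive integrand. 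The hypothesis $\phi''>0$ enters positively in this chained estimate, and the restriction $n\ge 3$ is forced by the need for $\sigma_3$ (together with its derivative tensor $\sigma_2^{ij}$) in the higher Minkowski identity.
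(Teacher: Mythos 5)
Your argument for $A_{0,\phi}$ is correct and follows essentially the same chain as the paper: you substitute $\mathcal{F}$ into \eqref{va-Aphi}, kill the $n\phi'H$ and $uH^2$ parts via \eqref{eq-Min2} and $H^2\geq \tfrac{2n}{n-1}\sigma_2$, and are left with a Ricci remainder plus the $\tfrac{u\phi''}{\phi\phi'}\Delta_g\Phi$ integral. The only difference from the paper's presentation is that you integrate $\tfrac{u\phi''}{\phi\phi'}\Delta_g\Phi$ by parts directly and bound $\langle\nabla\tfrac{u\phi''}{\phi\phi'},\nabla\Phi\rangle$ using static convexity and \eqref{eq-sTa}, whereas the paper first rewrites $\tfrac{\phi''}{\phi}(n\phi'-uH)$ in terms of $\Delta_g\phi'$ and then integrates $\tfrac{u}{\phi'}\Delta_g\phi'$ by parts; the two routes are equivalent and produce the identical final estimate $\partial_tA_{0,\phi}\leq\int_M u\tfrac{(\phi')^2-\phi\phi''-1}{\phi^2}(1-\tfrac{u^2}{\phi^2})\,d\mu\leq 0$. (Incidentally, the paper's displayed \eqref{eq-LA} is missing a factor of $\phi'$ on the Ricci-type term, but your computation --- which is the correct one --- shows this is only a transcription slip.)

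For $A_{1,\phi}$, however, your outline has genuine gaps and in one place misidentifies what happens. You say the extra $\Delta_g\phi'$ and $\phi'\overline{Ric}(\nu,\nu)$ get ``reprocessed into Ricci-type weights proportional to $[(\phi')^2-\phi\phi''-1]$,'' but that is not what occurs: using \eqref{eq-sTa} and \eqref{eq-RicciT} one finds the exact cancellation $\Delta_g\phi'+\phi'\overline{Ric}(\nu,\nu)=-\tfrac{uH\phi''}{\phi}$, which doubles the $\tfrac{\phi''u}{\phi}H(n-\tfrac{uH}{\phi'})$ integrand rather than producing a Ricci remainder at this stage. This identity is essential; without it you cannot reorganize \eqref{va-Hphi} into the form where the Newton inequalities apply cleanly. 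Second, the paper uses the Newton--MacLaurin pair $H^2\geq\tfrac{2n}{n-1}\sigma_2$ (applied to the $H^2$ term in $\tfrac{\phi''u}{\phi}H(n-\tfrac{uH}{\phi'})$) and $H\sigma_2\geq\tfrac{3n}{n-2}\sigma_3$ (applied to $-uH\sigma_2$), not the inequality $\sigma_2^2\geq\tfrac{3(n-1)}{2(n-2)}H\sigma_3$ you propose; the latter does not slot into the remainder of $\int(n\phi'\sigma_2-uH\sigma_2)\,d\mu$ that survives after applying \eqref{eq-Min3} with $k=2$. Third, and most importantly, you do not identify the closing step: after the Minkowski identity and the $\overline{Ric}(e_i,\nu)$ computation, the whole right-hand side collapses (via the formula for $\sigma_2^{ij}\nabla_i\nabla_j\phi'$) to $\tfrac{2n}{n-1}\int_M\tfrac{u}{\phi'}\sigma_2^{ij}\nabla_i\nabla_j\phi'\,d\mu$, which must then be integrated by parts against the Newton tensor $\sigma_2^{ij}$ using the divergence formula \eqref{Min3-eq2}, and the resulting $-\tfrac{2n}{n-1}\int_M\tfrac{\phi\phi''}{\phi'}\sigma_2^{ii}S_{ii}\langle\partial_r,e_i\rangle^2\,d\mu$ is nonpositive precisely because static convexity forces all $\kappa_i>0$ and hence $\sigma_2^{ij}$ to be positive definite. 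You nowhere invoke positive-definiteness of $\sigma_2^{ij}$ nor the divergence identity, and you explicitly say you have not verified that the pieces combine. So the $A_{1,\phi}$ part is a plausible plan of attack, not a proof.
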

\begin{rem}
    In fact, the inequality  $(\phi')^2-\phi\phi''\geq 0$ in \eqref{in-phiphi} is no need for deducing the  monotonicities of $A_{i,\phi}(M_t)$ $(i=0,1)$.
\end{rem}

\begin{proof}
    First, if $n\geq 2$, using the evolution equation (\ref{va-Aphi}), we have
   \begin{align}
   	\frac{d}{dt}A_{0,\phi}(M_t)=&\int_{M_t}\left[(n-\frac{uH}{\phi'})\frac{\phi''}{\phi}u+\phi'(n-\frac{uH}{\phi'})H\right]\,d\mu\notag\\
   	=&\int_{M_t}\left[\frac{2n}{n-1}\sigma_2-H^2+\frac{n}{n-1}\left(\overline{Ric}(\partial_r,\partial_r)-\overline{Ric}(\nu,\nu)\right)\right]u\,d\mu,\notag\\
    &+\int_{M_t}\left[{(n-\frac{uH}{\phi'})\frac{\phi''}{\phi}u}\right]\,d\mu\nonumber\\
   	\leq&\int_{M_t}\left[{(n-\frac{uH}{\phi'})\frac{\phi''}{\phi}u}\right]\,d\mu+\frac{n}{n-1}\int_{M_t}\left(\overline{Ric}(\partial_r,\partial_r)-\overline{Ric}(\nu,\nu)\right)u\,d\mu\label{ineq-WA}
   \end{align}
where in the second equality we have used the Minkowski identity \eqref{eq-Min2} and Newton-MacLaurin inequality in the last line of \eqref{ineq-WA}. By a direct calculation, we have
\begin{align}\label{eq-LA}
	\Delta_g\phi'=&\frac{\phi\phi'''-\phi'\phi''}{\phi^3}(\phi^2-u^2)+\frac{\phi''}{\phi}(n\phi'-uH)\nonumber\\
 =&(n-1)\frac{(\phi')^2-\phi\phi''-1}{\phi^2}(1-\frac{u^2}{\phi^2})+\frac{\phi''}{\phi}(n\phi'-uH),
\end{align}
where in the second equality we used the static condition (\ref{eq-subst}).
Combining inequality \eqref{ineq-WA} with equation \eqref{eq-LA}, we have
\begin{align}\label{Ineq-A'}
	\frac{d}{dt}A_{0,\phi}(M_t)\leq&\int_{M_t}\left[\frac{n}{n-1}\left(\overline{Ric}(\partial_r,\partial_r)-\overline{Ric}(\nu,\nu)\right)-(n-1)\frac{(\phi')^2-\phi\phi''-1}{\phi^2}(1-\frac{u^2}{\phi^2})\right]u\,d\mu\nonumber\\
 &+\int_{M_t}{\frac{u}{\phi'}\Delta_g\phi'}\,d\mu.
\end{align}
Since 
\begin{align}
	\int_{M_t}{\frac{u}{\phi'}\Delta_g\phi'}\,d\mu&=-\int_{M_t}{\langle\nabla{\frac{u}{\phi'},\nabla{\phi'}}\rangle}\,d\mu\notag\\
	&=-\int_{M_t}{\frac{\phi\phi''}{\phi'}(h^{st}-\frac{u\phi''}{\phi\phi'}g^{st})\langle\partial_{r},e_s\rangle\langle\partial_{r},e_t\rangle}\,d\mu\notag\\
	&\leq 0.\label{In-eq-L}
\end{align}
The inequality holds due to that $\phi''>0$ and $M_t$ is a static convex solution of the flow \eqref{flow-WVPF}.

By \eqref{eq-RicciT}, we have
\begin{align}
    \overline{Ric}(\partial_r,\partial_r)&=-n\frac{\phi''}{\phi},\label{eq-Ricrr}\\
    \overline{Ric}(\nu,\nu)&=-\frac{\phi\phi''+(n-1)((\phi')^2-1)}{\phi^2}+(n-1)u^2\frac{(\phi')^2-\phi\phi''-1}{\phi^4}\label{eq-Ricnunu}
\end{align}
and hence
\begin{align}\label{in-dr}
	\overline{Ric}(\partial_r,\partial_r)-\overline{Ric}(\nu,\nu)=(n-1)\frac{(\phi')^2-\phi\phi''-1}{\phi^2}(1-\frac{u^2}{\phi^2}).
\end{align}
Then combining \eqref{Ineq-A'}, \eqref{In-eq-L} with \eqref{in-dr}, we have
\begin{align*}
   \frac{d}{dt}A_{0,\phi}(M_t)\leq&\int_Mu\frac{(\phi')^2-\phi\phi''-1}{\phi^2}(1-\frac{u^2}{\phi^2})d\mu.
\end{align*}
Now, we conclude that $A_{0,\phi}(M_t)$ is monotone decreasing under the assumption \eqref{in-phiphi}.

Next, we prove that $A_{1,\phi}(M_t)$ is monotone decreasing along the flow if $n\geq 3$. Using the evolution equation \eqref{va-Hphi}, we have 
\begin{align}\label{Ineq-A2-eq1}
    \partial_tA_{1,\phi}(M_t)=&\int_M(n-\frac{uH}{\phi'})\frac{\phi''}{\phi}uH-\left(\Delta_g\phi'-2\phi'\sigma_2+\phi'\overline{Ric}(\nu,\nu)\right)(n-\frac{uH}{\phi'})d\mu.
\end{align}
Combining (\ref{eq-LA}), (\ref{in-dr}) with (\ref{Ineq-A2-eq1}), we see that
\begin{align}\label{Ineq-A2-eq2}
   \partial_tA_{1,\phi}(M_t)
   =&2\int_{M}\frac{\phi''u}{\phi}(nH-\frac{u}{\phi'}H^2)d\mu+2\int_{M}n\phi'\sigma_2-uH\sigma_2d\mu\nonumber\\
	\leq&2\int_{M}\frac{\phi''u}{\phi}(nH-\frac{2n}{n-1}\frac{u}{\phi'}\sigma_2)d\mu+2\int_{M}n\phi'\sigma_2-\frac{3n}{n-2}u\sigma_3d\mu\nonumber\\
 =&2\int_{M}\frac{\phi''u}{\phi}(nH-\frac{2n}{n-1}\frac{u}{\phi'}\sigma_2)d\mu+2\frac{n}{n-1}\int_M\sigma_2^{ij}\overline{Ric}(e_i,\nu)\langle\phi\partial_r,e_j\rangle.
\end{align}
where we used Newton-Maclaurin inequalities $H^2\geq \frac{2n}{n-1}\sigma_2$, $H\sigma_2\geq\frac{3n}{n-2}\sigma_3$ and Minkowski formula (\ref{eq-Min3}).
Then, by a direct calculation, we have 
\begin{align}\label{Ineq-A2-eq3}
	\sigma_2^{ij}\nabla_i\nabla_j\phi'=&\sigma_2^{ij}(\nabla_i(\frac{\phi''}{\phi}\langle\phi\partial_r,e_j\rangle))\nonumber\\
	=&\sigma_2^{ij}\left(\frac{\phi''}{\phi}(\phi'g_{ij}-uh_{ij})+\frac{\phi\phi'''-\phi'\phi''}{\phi}\langle\partial_r,e_i\rangle\langle\partial_r,e_j\rangle\right)\nonumber\\
	=&\frac{\phi''}{\phi}((n-1)\phi'H-2u\sigma_2)+(n-1)\phi'\frac{(\phi')^2-\phi\phi''-1}{\phi^2}\sigma_2^{ij}\langle\partial_r,e_i\rangle\langle\partial_r,e_j\rangle,
\end{align}
where in the last equality we used the static condition (\ref{eq-subst}). 
In the meanwhile, by (\ref{eq-RicciT}) we notice that
\begin{align}\label{Ineq-A2-eq4}
   \overline{Ric}(e_i,\nu)=&(n-1)u\frac{(\phi')^2-\phi\phi''-1}{\phi^3}\langle\partial_r,e_i\rangle.
\end{align}
Substituting (\ref{Ineq-A2-eq3}) and (\ref{Ineq-A2-eq4}) into (\ref{Ineq-A2-eq2}), we get
\begin{align*}
   \partial_tA_{1,\phi}(M_t)\leq&\frac{2n}{n-1}\int_{M}\frac{u}{\phi'}\sigma_2^{ij}\nabla_i\nabla_j\phi'\\
   =&-\frac{2n}{n-1}\int_M\sigma_2^{ij}\nabla_i\frac{u}{\phi'}\nabla_j\phi'd\mu
	-\frac{2n}{n-1}\int_{M}\frac{u}{\phi'}(\sigma_2)^{ij}_{i}\nabla_j\phi'\\
	=&-\frac{2n}{n-1}\int_M\sigma_2^{ij}\nabla_i\frac{u}{\phi'}\nabla_j\phi'd\mu+\frac{2n}{n-1}\int_{M}\frac{u\phi''}{\phi'}\overline{Ric}(e_i,\nu)\langle\partial_r,e_i\rangle\\
	=&-\frac{2n}{n-1}\int_M\frac{\phi\phi''}{\phi'}\sigma_2^{ii}S_{ii}\langle\partial_r,e_i\rangle^2 d\mu+2n\int_{M}u^2\phi''\frac{(\phi')^2-\phi\phi''-1}{\phi^3\phi'}(1-\frac{u^2}{\phi^2})d\mu,
\end{align*}
where we used the Minkowski identity (\ref{eq-Min3}), equations (\ref{preconv-1}) and (\ref{Ineq-A2-eq4}). Since $M_t$ is a static convex solution of the flow \eqref{flow-WVPF}, we conclude  that $\sigma_2^{ij}=Hg^{ij}-h^{ij}$ is positive definite. Then, 
\[\partial_tA_{1,\phi}(M_t)\leq 0\]
by the assumption \eqref{in-phiphi}.
\end{proof}
Finally, we give the proof of Corollary \ref{Cor-wii}.
\begin{proof}[Proof of Corollary \ref{Cor-wii}]
First, we claim that if $\mathbf{N}^{n+1}$ is a static rotationally symmetric space under the assumptions in Definition \ref{defn-N}, then the single variable functions $\chi_{i,\alpha}(x)$ defined in \eqref{eq-chi} are strictly increasing functions of $x$. To see this, we only need to check that $A_{i,\phi}(r)$ are both strictly increasing functions of $r$ for $i=0$ and $1$, since $V_{\phi}^{\alpha}(r)$ are strictly increasing functions of $r$. By definitions of $A_{i,\phi}(r)(i=0,1)$ and the graphical representation of a hypersurface in $\mathbf{N}^{n+1}$ in \S\ref{subsection-graph}, we have
\begin{align*}
A_{0,\phi}(r)&=\omega_n\phi^n\phi',\\
A_{1,\phi}(r)&=n\omega_n\phi^{n-1}(\phi')^2,
\end{align*}
where $\omega_n$ is the area of the $n$-dimensional unit sphere, and hence
\begin{align}
    \frac{d}{dr}A_{0,\phi}(r)&=\omega_n\phi^{n-1}\left[n(\phi')^2+\phi\phi''\right]>0,\label{eq-dA0}\\
    \frac{d}{dr}A_{1,\phi}(r)&=n\omega_n\phi^{n-2}\phi'\left[(n-1)(\phi')^2+2\phi\phi''\right]>0,
\end{align}
since $\phi''>0$ by assumptions.

Then the inequalities \eqref{In-wii} and \eqref{In-wiii} follow from Theorem \ref{Thm-main}, Theorem \ref{preservingofconvex}, Lemma \ref{lem-mono} and Proposition \ref{Prop-monto} directly. In precise, if the flow \eqref{flow-WVPF} starting from $M$ converges to a slice $S(r_{\infty})$, then we have
\begin{equation}
    A_{i,\phi}(M)\geq A_{i,\phi}(r_{\infty})=\chi_{i,\alpha}(V_{\phi}^{\alpha}(r_{\infty}))\geq \chi_{i,\alpha}(V_{\phi}^{\alpha}(\Omega))
\end{equation}
for both $i=0,1$.

Finally, we deal with the equality case. According to Theorem \ref{preservingofconvex}, the flow hypersurfaces $M_t$ becomes strictly static convex unless $M=S(r_0)$ with $((\phi')^2-\phi\phi'')(r_0)=0$. If $M=S(r_0)$ with $((\phi')^2-\phi\phi'')(r_0)=0$, we have nothing to prove. Hence without loss of generality, we assume that $M_t$ becomes strictly static convex with initial hypersurface $M$. Then we return back to check the proof of Proposition \ref{Prop-monto}, we can easily see that $A_{i,\phi}(M_t)$ $(i=0,1)$ is strictly monotone decreasing for $t>0$ unless $M_t$ is a slice of $\mathbf{N}^{n+1}$. Since a slice is a static solution of the flow $\eqref{flow-WVPF}$, this implies that $M$ is also a slice of $\mathbf{N}^{n+1}$. This completes the proof of Corollary \ref{Cor-wii}.
\end{proof}

\end{document}